\documentclass[11pt]{article}
\usepackage{amsmath,amsthm,amssymb}
\usepackage[T1]{fontenc}
\usepackage[utf8]{inputenc}
\usepackage[dvipdf]{graphicx}
\usepackage{color}
\usepackage{epstopdf}
\usepackage{dsfont}
\usepackage{mathtools}
\usepackage{enumerate}
\usepackage{mathrsfs}
\usepackage{float}
\usepackage[normalem]{ulem}
\usepackage[colorlinks=true,citecolor=red,linkcolor=db,urlcolor=blue,pdfstartview=FitH]{hyperref}
\usepackage[numbers]{natbib}
\usepackage{booktabs}
\usepackage{subcaption}
\mathtoolsset{showonlyrefs=true}
\numberwithin{equation}{section}

\definecolor{db}{RGB}{0, 0, 130}
\definecolor{rp}{rgb}{0.25, 0, 0.75}
\definecolor{dg}{rgb}{0, 0.6, 0}

\newtheorem{theorem}{Theorem}[section]

\newtheorem{definition}{Definition}[section]
\newtheorem{proposition}{Proposition}[section]
\newtheorem{corollary}{Corollary}[section]

\newtheorem{assumption}{Assumption}[section]
\newtheorem{lemma}{Lemma}[section]

\newtheorem{remark}{Remark}[section]

\def\Cc{\mathcal C}
\def\Fc{\mathcal F}
\def\Gc{\mathcal G}

\def\Ic{\mathcal I}
\def\Lc{\mathcal L}
\def\Pc{\mathcal P}

\def\Wc{\mathcal W}

\def\E{\mathbb{E}}
\def\F{\mathbb{F}}

\def\I{\mathbb I}

\def\P{\mathbb P}
\def\R{\mathbb R}

\def\Sbf{\mathbf S}

\def\x{\times}
\def\Om{\Omega}

\def\eps{\varepsilon}

\title{Mean-field optimal stopping with endogenous quantile cutoffs%
\footnote{\emph{Keywords:} mean-field optimal stopping, McKean--Vlasov dynamics, endogenous quantile cutoff, nonconvex survival constraint, strong--weak equivalence, dynamic programming, propagation of chaos.}%
\footnote{\emph{2020 Mathematics Subject Classification:} Primary 60G40, 49N80; secondary 49L20, 60K35, 93E20.}}
\author{Erhan Bayraktar\footnote{Department of Mathematics, University of Michigan. Email: erhan@umich.edu Erhan Bayraktar gratefully acknowledges support from the National Science Foundation (NSF) under grant DMS-2507940.}
        \and Ibrahim Ekren\footnote{ Department of Mathematics, University of Michigan. Email: iekren@umich.edu. I. Ekren gratefully acknowledges support from the National Science Foundation (NSF) under grant DMS-2406240
} 
        \and Xihao He\footnote{Department of Mathematics, University of Southern California. Email: \mbox{xihaohe@usc.edu}.}}
\date{}

\begin{document}
\maketitle

\begin{abstract}
    We study a mean-field optimal stopping problem with an endogenous population-level shutdown. All remaining agents stop when the survival mass falls below a prescribed threshold.
    We recast the discontinuous objective as the singular, nonconvex constraint that the survival mass lie in $\{0\}\cup[\alpha,1]$. We prove the equivalence of strong and weak values via an approximation and the existence of an optimal rule via compactness and penalization. We also prove a dynamic programming principle. The value is continuous away from the critical boundary but may be discontinuous at the boundary itself. Under strict initial feasibility, finite-population values converge to the mean-field value. In the same regime, the laws of near-optimal empirical measures are tight and every mean-field optimizer admits a recovery sequence. At the threshold, however, finite-population convergence may fail.
\end{abstract}

\tableofcontents
\section{Introduction}
\label{sec:introduction}

Classical optimal stopping asks when to terminate a stochastic activity so as to maximize its expected reward; see, for example, \cite{PeskirShiryaev2006}. In the present work we consider an optimal stopping problem where the stopping happens either at the time chosen by the representative agent or at an endogenous deterministic cutoff determined by the distribution of that stopping time. The horizon therefore depends on the law of the policy being optimized rather than being fixed in advance.

More precisely, let $\tau$ be a representative stopping time and denote
\[
    F_\tau(s):=\mathbb P(\tau\le s).
\]
Fix a minimum survival proportion $\alpha\in(0,1]$. We denote the upper generalized quantile
\begin{equation}
\label{eq:intro_quantile_cutoff}
    q_\alpha(\tau)
    :=\inf\big\{s\in[0,T]:F_\tau(s)>1-\alpha\big\}
     =\inf\big\{s\in[0,T]:\mathbb P(\tau>s)<\alpha\big\},
\end{equation}
with the convention $\inf\varnothing=T+1$. Thus $q_\alpha(\tau)$ is the first time at which the surviving mass falls strictly below $\alpha$; the strict inequality matters when the distribution of $\tau$ has atoms. For a gain process $G$, in its simplest form, the problem we study is
\begin{equation}
\label{eq:intro_quantile_problem}
    \sup_\tau\,\mathbb E\big[G_{\tau\wedge q_\alpha(\tau)}\big].
\end{equation}
Once the law of $\tau$ is fixed, the cutoff in \eqref{eq:intro_quantile_problem} is deterministic. It nevertheless changes with the stopping rule, unlike the exogenous terminal date in a standard stopping problem.

We formulate the problem \eqref{eq:intro_quantile_problem} in a mean-field setting under a stopped McKean--Vlasov diffusion. Let $I_s\in\{0,1\}$ be the survival indicator, $X_s$ be the state, and write
    $\mu_s:=\mathcal L(X_s,I_s).$
Both the state dynamics and the running reward may depend on the population law $\mu_s$. For an admissible stopping rule $\mathbb P$, the objective to optimize is
\begin{equation}
\label{eq:intro_mean_field_objective}
    J_t(\mathbb P)
    :=\mathbb E^{\mathbb P}\left[
       \int_t^T
       f(s,X_s,\mu_s)I_s
       \mathbf 1_{[\alpha,1]}(\mathbb E[I_s])\,ds
    \right].
\end{equation}
In our framework, the reward remains active while at least an $\alpha$-fraction of the population survives and vanishes once the survival mass drops below that level. Thus, the system may operate at or above the viability threshold or shut down completely, but it cannot continue with a strictly positive subthreshold mass.

\paragraph{Background and motivation.}
Consider a population of $N$ agents. Let $\tau^1,\ldots,\tau^N$ be their stopping times, and define the fraction still active at time $s$ by
\[
    q_s^N:=\frac1N\sum_{i=1}^N\mathbf 1_{\{\tau^i>s\}}.
\]
The system-wide shutdown time is
\begin{equation}
\label{eq:intro_finite_cutoff}
    \bar\tau^N
    :=\inf\big\{s\in[0,T]:q_s^N<\alpha\big\}.
\end{equation}
Agent $i$ remains active until $\tau^i\wedge\bar\tau^N$. In a symmetric large population, $q_s^N$ is approximated by $\mathbb P(\tau>s)$, and $\bar\tau^N$ formally converges to the quantile cutoff in \eqref{eq:intro_quantile_cutoff}. The representative-agent problem thus describes the mean-field limit of a system that permits local exits only while the remaining population is collectively viable.

Our primary motivation is distributed quickest change detection. In this problem, sensor $i$ observes a local data stream and raises an irreversible alarm at time $\tau^i$. Under a fusion rule announced in advance and known to every sensor, the center declares a system-wide event at a prescribed empirical quantile of the local alarm times. Equivalently, it stops when the fraction $q_s^N$ of sensors that have not reported falls below a threshold. Equation \eqref{eq:intro_finite_cutoff} is the finite-sensor rule, and \eqref{eq:intro_quantile_cutoff} is its mean-field counterpart. We treat the choice of local alarm policies as a centralized design problem, not as a game among strategic sensors.

Several strands of multisource quickest detection are closely related. Bayraktar and Poor study a two-source problem in which one alarm detects the minimum of two independent latent Poisson disorder times in \cite{BayraktarPoor2007}. When the two source models and priors coincide, their Remark~4.1 also considers the minimum of two independent copies of the optimal one-source alarm. Bayraktar and Lai consider a distributed sensor network in which multiple sensors report to a control center and some may be Byzantine in \cite{BayraktarLai2015}. Fellouris, Bayraktar, and Lai analyze $L$-th-alarm, voting, and Low-Sum-CUSUM procedures. Their $L$-th-alarm rule stops at an order statistic of the local alarm times and is a direct finite-sensor analogue of our quantile mechanism in \cite{FellourisBayraktarLai2018}. Huang, Huang, and Lin formulate a multi-hypothesis Byzantine detection problem and prove first-order asymptotic optimality of the simultaneous $d$-th-alarm rule under sufficient communication bandwidth. They also analyze a more communication-efficient multi-shot rule, which is asymptotically optimal in certain special cases in \cite{HuangHuangLin2021}. Mei develops scalable multistream procedures by aggregating local CUSUM statistics in \cite{Mei2010}. Those fusion-based models prescribe the local detection statistics and aggregation rules. Here the local stopping law is optimized and itself determines the mean-field quantile cutoff.

A second illustration comes from epidemic control. Consider a federation of cities or states, and let $\tau^i$ be the time at which jurisdiction $i$ is classified as high risk according to a local indicator such as incidence, test positivity, or hospital pressure. When only a few jurisdictions meet the high-risk criteria, an authority might restrict travel to and from those jurisdictions while allowing travel elsewhere. Once the proportion of restricted jurisdictions exceeds $1-\alpha$, maintaining a sufficiently connected network may no longer be feasible, and the authority instead suspends nonessential inter-jurisdictional travel throughout the system at $\bar\tau^N$.

Reliability and critical-infrastructure systems offer another instance of the same mechanism. Individual machines or network components may be removed after local failures, while the full system shuts down or enters a safe mode once the proportion of operational units falls below $\alpha$. This is a dynamic analogue of a $k$-out-of-$N$ reliability system \cite{BolandProschan1983}. In both the sensor and reliability examples, a population statistic directly triggers the aggregate action.

\paragraph{Recent developments.}
Two related but conceptually different lines of work have emerged in mean-field stopping. The first concerns mean-field games, in which agents choose their stopping rules strategically and the population distribution enters the Nash equilibrium condition. This line includes the timing-game and bank-run models of Carmona, Delarue, and Lacker \cite{CarmonaDelarueLacker2017} and Nutz \cite{Nutz2018}, Bertucci's obstacle-problem approach \cite{Bertucci2018}, and the relaxed occupation-measure formulation of Bouveret, Dumitrescu, and Tankov \cite{BouveretDumitrescuTankov2020}. More recently, Possama{\"\i} and Talbi have developed weak equilibria and a master-equation perspective in \cite{PossamaiTalbi2025}. A complementary probabilistic approach characterizes randomized-strategy equilibria through coupled reflected forward--backward McKean--Vlasov stochastic differential equations in \cite{CossoDAndolfiDumitrescu2026}. In a different forward--backward formulation, motivated by recursive mean-field Dynkin games, Bayraktar and Rzymowski prove short-time $L^p$ well-posedness and, under additional monotonicity conditions, global $L^2$ well-posedness for fully coupled mean-field doubly reflected forward--backward stochastic differential equations with two optional barriers in \cite{BayraktarRzymowski2026}. Nutz, San Martin, and Tan show that some equilibria in a mean-field timing game cannot arise as limits of finite-player equilibria in \cite{NutzSanMartinTan2020}. He, Tan, and Zou use a mean-field Bank--El Karoui representation to study timing games with common noise in \cite{HeTanZou2025}. The bank-run model of Carmona, Delarue, and Lacker also features default triggered by aggregate withdrawals. All of these models, however, concern noncooperative equilibria. We study instead a centralized problem: a planner maximizes an aggregate criterion under a fixed nonconvex viability constraint.

The second line treats centralized McKean--Vlasov stopping. Belomestny and Schoenmakers develop a particle-regression method for numerical approximation in \cite{BelomestnySchoenmakers2020}. In discrete time and finite state space, Magnino, Zhu, and Lauri{\`e}re establish a mean-field approximation for cooperative many-agent stopping and develop two deep-learning algorithms, one based on dynamic programming and backward induction in \cite{MagninoZhuLauriere2025}. Agram and {\O}ksendal derive verification conditions for conditional McKean--Vlasov optimal stopping with common noise and jumps in \cite{AgramOksendal2024}. Talbi, Touzi, and Zhang formulate the problem weakly through the joint law of the stopped state and survival process, prove a dynamic programming principle, and derive an obstacle equation on Wasserstein space in \cite{TalbiTouziZhang2023}; a companion paper develops the viscosity theory in \cite{TalbiTouziZhangViscosity2023}. For an uncontrolled state, Cosso and Perelli obtain a dynamic programming principle for an extended value function and prove, under suitable assumptions, that it is a viscosity solution of the associated second-order variational inequality in \cite{CossoPerelli2025}. Talbi, Touzi, and Zhang subsequently prove convergence of finite-population stopping problems to their mean-field limit in \cite{TalbiTouziZhang2024}. Under an $H$-hypothesis-type condition, He extends the limit theory and strong--weak comparison to non-Markovian dynamics with common noise in \cite{He2025}, while Talbi and Touzi use It{\^o}--Wentzell formulas to derive dynamic programming equations for mean-field stopping with common noise in \cite{TalbiTouzi2026}. Cardaliaguet, Jackson, and Souganidis study the closely related problem of mean-field control with particle removal. They characterize the limiting value by an infinite-dimensional quasi-variational inequality and prove convergence from finite populations in \cite{CardaliaguetJacksonSouganidis2026}. In a related absorption model, the same authors formulate the mean-field problem on subprobability measures and prove convergence of the finite-particle values in \cite{CardaliaguetJacksonSouganidisAbsorption2026}.

Interacting default models consider McKean--Vlasov equations with hitting-time feedback, including questions of blow-up and global solvability \cite{BayraktarGuoTangZhang2024}, relate the surviving proportion in a finite default system to the limiting McKean--Vlasov default probability \cite{BayraktarGuoTangZhang2025}, and analyze the passage from smooth to singular hitting-time feedback under common noise \cite{HamblyEtAl2025}. In these models, state hitting generates the stopping; here the stopping rules are optimized, and their endogenous quantile triggers a common shutdown.

The constrained-stopping literature either prescribes the distribution of the stopping time \cite{BayraktarMiller2019,BeiglbockEtAl2018} or imposes equality and inequality constraints on expected costs in a weak formulation \cite{BayraktarYao2024Stopping,BayraktarYao2024ControlStopping}. K{\"a}llblad develops a dynamic programming approach to general distribution constraints using measure-valued martingales \cite{Kallblad2022}. By contrast, the stopping rule in \eqref{eq:intro_quantile_problem} generates the quantile endogenously.

Our analysis builds most directly on Talbi, Touzi, and Zhang \cite{TalbiTouziZhang2023,TalbiTouziZhangViscosity2023,TalbiTouziZhang2024} and He \cite{He2025}. The particle-removal model of Cardaliaguet, Jackson, and Souganidis \cite{CardaliaguetJacksonSouganidis2026} is also close, although it includes an additional continuous control and a stopping penalty. Together, these papers provide the foundations for weak formulations, dynamic programming, viscosity theory, strong--weak comparison, and finite-population limits in mean-field stopping and control. Our new feature is the endogenous viability cutoff. It confines the survival mass to the nonconvex set $\{0\}\cup[\alpha,1]$ and creates a singular boundary at $p=\alpha$.

\paragraph{Main results and proof ideas.}
We begin by reducing the discontinuous, thresholded objective to a constrained mean-field stopping problem. Given an admissible rule, we stop all surviving particles at the first deterministic time when the survival mass would fall below $\alpha$. The modified system agrees with the original system before that time; afterward, the threshold factor in \eqref{eq:intro_mean_field_objective} is already zero. This operation leaves the value unchanged and gives a survival flow satisfying
\[
    \mathbb E[I_s]\in\{0\}\cup[\alpha,1].
\]
The singularity is thereby isolated in a deterministic population constraint.

Our main probabilistic result is the equivalence of the strong and weak formulations. For every weak stopping rule, the law of the state, Brownian increments, and survival process from time $t$ onward can be approximated in Wasserstein distance by the corresponding laws induced by strong stopping rules; consequently, the two values coincide. The construction has three steps. We first discretize a weak stopping rule on a fine time grid without violating the population constraint. We then reserve a short Brownian interval immediately after the initial time for randomization. Normalized increments on this interval produce independent uniform variables, which recursively reproduce the conditional Bernoulli laws of the discrete stopping decisions. In this way, the weak rule is approximated by a rule adapted to the strong Brownian information. Lipschitz estimates for the McKean--Vlasov equation give convergence of the projected laws, and lower semicontinuity of the reward functional turns the density result into equality of the values.

To prove that the constrained weak stopping problem admits an optimizer, we work with laws on the path space from time $t$ onward and express admissibility through a martingale problem. Moment estimates, compactness of the survival-path space, and stability of the martingale identities make the set of projected laws induced by weak stopping rules compact in the Wasserstein topology. We measure violations of the constraint by
\[
    H_t(Q):=\int_t^T(\alpha-\mathbb E^Q[I_s])^+\mathbb E^Q[I_s]\,ds\geq 0
\]
and consider the penalized objective $\mathcal F_t(Q)-\varepsilon^{-1}H_t(Q)$. Compactness and continuity give maximizers of the penalized problems, convergence of their values, and an optimal constrained weak rule. Additionally, concatenation, combined with a monotonicity argument for instantaneous stopping at the concatenation time, yields the dynamic programming principle.

The regularity theory reflects the singular nature of the threshold. When the running reward is continuous, the value is jointly continuous in time and the initial law whenever the initial survival mass differs from $\alpha$; it may genuinely be discontinuous at the threshold. The proof couples nearby initial laws and changes only a vanishing fraction of their survival paths to restore feasibility. An ordinary state coupling is not enough, because it can leave a positive survival mass below $\alpha$. We also compare open and closed threshold conventions and describe the one-sided dependence of the value on $\alpha$.

We then obtain the limit theory for the mean field problem. 
On the one hand, we prove tightness of the random empirical path laws, identify every subsequential limit through a martingale problem, and use uniform integrability to pass to the limit in the reward. 
On the other hand, we approximate a strong mean-field rule by finite-valued nonanticipative rules, apply these rules independently to the $N$-particle system, and alter the stopping decisions of a small set of particles so that the empirical viability constraint holds. The values of the $N$-particle system converge and optimizers are recovered when the limiting initial survival mass is strictly above $\alpha$. Below the threshold the limiting value is zero, whereas at the threshold convergence may fail without an additional one-sided feasibility condition.

Our contributions are as follows.
\begin{enumerate}
\item We formulate a quantile-truncated mean-field optimal stopping problem in which an endogenous population cutoff turns individual stopping into a system-wide shutdown rule.

\item We identify the equivalent singular constraint $p_s\in\{0\}\cup[\alpha,1]$ and prove strong--weak equivalence by constraint-preserving discretization and Brownian purification.

\item Using compactness and penalization, we establish existence of an optimal weak rule and prove a dynamic programming principle. We also show that the only possible interior discontinuity of the value occurs at the viability boundary $p=\alpha$.

\item We prove a finite-population limit theorem. It includes tightness and identification of near-optimal empirical laws, a recovery construction for mean-field optimizers, the subthreshold regime, and a counterexample that explains the obstruction at the exact threshold.

\item We compare the open and closed threshold conventions and study how the value depends on the threshold.
\end{enumerate}

\paragraph{Organization of the paper.}
Section~2 sets up the canonical spaces, the strong and weak stopping rules, the thresholded and constrained formulations, and their connection with the quantile-truncated problem. Section~3 proves strong--weak equivalence. Section~4 gives the compactness and penalization arguments for existence, followed by the dynamic programming principle. Section~5 examines regularity of the value and the singular boundary at survival mass $\alpha$. Section~6 treats the finite-population limit and the recovery of optimal mean-field laws. Section~7 compares open and closed threshold conventions and studies how the value depends on the threshold.

\section{Preliminaries and problem formulation}
\subsection{Notation}
\noindent$\mathrm{(i)}$
Fix a finite horizon $T\in(0,+\infty)$ and dimensions $n,d\in\mathbb N$. For a measurable space
$(E,\mathcal E)$, let $\Pc(E)$ be the set of probability measures on
$(E,\mathcal E)$. If $\P\in\Pc(E)$ and $Y$ is a random variable taking values in a measurable space $F$, write
\[
    \Lc^{\P}(Y):=\P\circ Y^{-1}
\]
for the law of $Y$ under $\P$.

\vspace{0.5em}
\noindent$\mathrm{(ii)}$
For a Polish space $(E,d_E)$, let $C([a,b],E)$ denote the space of
continuous $E$-valued functions on $[a,b]$, equipped with the uniform metric,
and let $C_b(E)$ denote the bounded continuous real-valued functions on $E$.
Write $\Pc_2(E)$ for the set of $\rho\in\Pc(E)$ such that
\[
    \int_E d_E(e,e_0)^2\,\rho(de)<+\infty
\]
for one, and hence every, $e_0\in E$. For $\rho_1,\rho_2\in\Pc_2(E)$, the
Wasserstein $2$-distance is
\[
    \Wc_2(\rho_1,\rho_2)
    :=
    \left(
        \inf_{\gamma\in\Gamma(\rho_1,\rho_2)}
        \int_{E\times E}d_E(e_1,e_2)^2\,\gamma(de_1,de_2)
    \right)^{1/2},
\]
where $\Gamma(\rho_1,\rho_2)$ is the set of couplings of $\rho_1$ and
$\rho_2$.

\vspace{0.5em}
\noindent$\mathrm{(iii)}$
For $s\in[0,T]$, let $\I([s,T])$ be the set of $\{0,1\}$-valued decreasing càdlàg functions
$i$ on $\{s-\}\cup[s,T]$. 
In particular,
either $i_{s-}=0$ and
$i_r=0$ for every $r\in[s,T]$, or $i_{s-}=1$ and there exists a unique
$\vartheta_s(i)\in[s,T]\cup\{T+1\}$ such that
\[
    i_r=\mathbf 1_{\{r<\vartheta_s(i)\}},
    \qquad r\in[s,T].
\]
The convention $\vartheta_s(i)=T+1$ means that the particle survives through
$T$, while $\vartheta_s(i)=T$ means that it stops exactly at $T$. If
$i_{s-}=0$, set $\vartheta_s(i):=s$. We equip $\I([s,T])$ with the metric
\begin{equation}\label{eq:metric_I}
    d_{\I,s}(i,j)
    :=|i_{s-}-j_{s-}|+|\vartheta_s(i)-\vartheta_s(j)|.
\end{equation}
The map
   $ i\longmapsto (i_{s-},\vartheta_s(i))$
is a bijection from $\I([s,T])$ onto the compact set
    $\{(0,s)\}\cup\big(\{1\}\times[s,T+1]\big).$
Consequently, $\I([s,T])$ is a compact Polish space under
$d_{\I,s}$. This metric distinguishes both the value at $s-$ and the cases
``stop at $T$'' and ``survive through $T$''.

\vspace{0.5em}
\noindent$\mathrm{(iv)}$
Let $\Sbf:=\R^n\times\{0,1\}$ and equip it with the metric
\[
    d_{\Sbf}\big((x,i),(x',i')\big)^2
    :=|x-x'|^2+|i-i'|^2.
\]
For $m\in\Pc_2(\Sbf)$, define
\begin{equation}\label{eq:M2_definition}
    M_2(m)
    :=\int_{\Sbf}\big(|x|^2+i^2\big)\,m(dx,di).
\end{equation}
For $k\in\mathbb N$, write $\Cc^k:=C([0,T],\R^k)$. For
$t\in[0,T]$, let
\[
    C_0([t,T],\R^d)
    :=\{w\in C([t,T],\R^d):w_t=0\}
\]
and define the path space from time $t$ onward
\begin{equation}\label{eq:path_space_from_t}
    \Om_t
    :=C([t,T],\R^n)
      \times C_0([t,T],\R^d)
      \times\I([t,T]).
\end{equation}
Equip $\Om_t$ with
\begin{equation}\label{eq:path_metric_from_t}
\begin{split}
    d_t\big((x,w,i),(x',w',i')\big)^2
    :=\;&\|x-x'\|_{\infty,[t,T]}^2
       +\|w-w'\|_{\infty,[t,T]}^2
       +d_{\I,t}(i,i')^2.
\end{split}
\end{equation}
The Wasserstein distance on $\Pc_2(\Om_t)$ induced by $d_t$ is denoted by
$\Wc_{2,t}$.

\begin{remark}
    The metric $d_{\I,s}$ controls the time-integrated distance between
    survival paths. Indeed, for $i,j\in\I([s,T])$,
    \begin{equation}\label{eq:I_L1_bound}
        \int_s^T |i_r-j_r|\,dr
        \le |\vartheta_s(i)-\vartheta_s(j)|
        \le d_{\I,s}(i,j).
    \end{equation}
\end{remark}

\subsection{Coefficients and standing assumptions}

Fix $\alpha\in(0,1]$. The coefficients of the stopped McKean--Vlasov diffusion are
\[
    (b,\sigma):[0,T]\times\R^n\times\Pc_2(\Sbf)
    \longrightarrow
    \R^n\times\R^{n\times d},
\]
and the running reward is
\[
    f:[0,T]\times\R^n\times\Pc_2(\Sbf)\longrightarrow\R.
\]

\begin{assumption}\label{ass:coefficient_reward}
The maps $b$, $\sigma$, and $f$ are Borel measurable. Moreover, there are
constants $L,C>0$ such that the following hold.

\medskip
\noindent\textnormal{(i)} For every $s\in[0,T]$,
$x,x'\in\R^n$, and $m,m'\in\Pc_2(\Sbf)$,
\begin{equation}\label{eq:coeff_lipschitz}
\begin{split}
    |b(s,x,m)-b(s,x',m')|
      +|\sigma(s,x,m)-\sigma(s,x',m')|
      \le L\big(|x-x'|+\Wc_2(m,m')\big),
\end{split}
\end{equation}
and
\begin{equation}\label{eq:coeff_growth}
    |b(s,x,m)|+|\sigma(s,x,m)|
    \le C\big(1+|x|+M_2(m)^{1/2}\big).
\end{equation}

\medskip
\noindent\textnormal{(ii)} For every $s\in[0,T]$, the map
$(x,m)\mapsto f(s,x,m)$ is lower semicontinuous on
$\R^n\times\Pc_2(\Sbf)$, and
\begin{equation}\label{eq:f_growth}
    |f(s,x,m)|
    \le C\big(1+|x|^2+M_2(m)\big).
\end{equation}
\end{assumption}

\begin{remark}\label{rem:running_reward_only}
We consider only a running reward. The strong--weak value argument uses a
deterministic cutoff that stops all remaining particles once the survival mass
falls below $\alpha$, and this operation would generally change a terminal
reward $g(X_T,\mu_T)$. Extending these cutoff-based results to an arbitrary
terminal reward would therefore require an additional invariance or
monotonicity assumption on $g$.
\end{remark}

\subsection{Problem formulation}

Let
    $\Om:=\Cc^n\times\Cc^d\times\I([0,T])$
with its Borel $\sigma$-field $\Fc_T$, and let $(X,W,I)$ be the canonical
process. Thus, for $\omega=(x,w,i)\in\Om$,
$    X_s(\omega)=x_s,
    W_s(\omega)=w_s,
    I_s(\omega)=i_s.$
Let $\F=(\Fc_s)_{s\in[0,T]}$ be the natural filtration
$    \Fc_s
    :=\sigma\big(I_{0-},X_r,W_r,I_r:0\le r\le s\big).$
All statements about Brownian motion or adaptedness under a probability $\P$
refer to the $\P$-completion of the relevant filtration.

For $t\in[0,T]$, define the projection onto $\Om_t$
\begin{equation}\label{eq:projection_onto_Omega_t}
\begin{split}
    \Pi_t:\Om&\longrightarrow\Om_t,\\
    \Pi_t(x,w,i)
    &:=\Big(
       x_{|[t,T]},
       (w_s-w_t)_{s\in[t,T]},
       i_{|\{t-\}\cup[t,T]}
       \Big).
\end{split}
\end{equation}
For a probability $\P$ on $\Om$, write
    $\P^t:=\P\circ\Pi_t^{-1}.$
Thus $\P^t$ is the image law on $\Om_t$ of the trajectory from time $t$
onward, with the Brownian coordinate recentered at zero.

After time $t$, the strong information is generated by the current state, the
survival status immediately before $t$, and the Brownian path. Thus, for
$s\in[t,T]$, set
\begin{equation}\label{eq:strong_filtration}
    \Gc_s^t
    :=\sigma\big(X_t,I_{t-},W_r:0\le r\le s\big),
    \qquad
    \mathbb G^t:=(\Gc_s^t)_{s\in[t,T]}.
\end{equation}
We impose no strong-adaptedness condition on the control path before $t$,
since the formulation allows arbitrary pre-$t$ histories.

\begin{definition}\label{def:stopping_rule}
Let $t\in[0,T]$ and $\nu\in\Pc_2(\Sbf)$. A probability $\P$ on
$(\Om,\Fc_T)$ is called a \emph{weak stopping rule on $[t,T]$ associated with
$\nu$} if the following conditions hold:

\begin{enumerate}
\item[(i)] $W$ is a Brownian motion with respect to the $\P$-completed
filtration $\F$, and
$\Lc^{\P}(X_t,I_{t-})=\nu$;

\item[(ii)] for every $s\in[t,T]$, $\P$-a.s.,
\begin{equation}\label{eq:dynamics}
    X_s
    =X_t
     +\int_t^s b(r,X_r,\mu_r)I_r\,dr
     +\int_t^s \sigma(r,X_r,\mu_r)I_r\,dW_r,
\end{equation}
where
\begin{equation}\label{eq:mu_definition}
    \mu_r:=\Lc^{\P}(X_r,I_r),
    \qquad r\in[t,T].
\end{equation}
\end{enumerate}
The collection of weak stopping rules is denoted by $\Pc_W(t,\nu)$.

A weak stopping rule $\P\in\Pc_W(t,\nu)$ is called a \emph{strong stopping
rule} if, for every $s\in[t,T]$, $I_s$ is measurable with respect to the
$\P$-completion of $\Gc_s^t$. The collection of strong stopping rules is
denoted by $\Pc_S(t,\nu)$.
\end{definition}

\begin{lemma}\label{lem:moment_estimate}
Under Assumption \ref{ass:coefficient_reward}, there is a constant $C$,
depending only on $T$ and the constants in the assumption, such that for every
$\P\in\Pc_W(t,\nu)$,
\begin{equation}\label{eq:moment_estimate}
    \E^{\P}\left[\sup_{s\in[t,T]}|X_s|^2\right]
    \le C\left(1+\int_{\Sbf}|x|^2\,\nu(dx,di)\right).
\end{equation}
In particular, $\P^t\in\Pc_2(\Om_t)$.
\end{lemma}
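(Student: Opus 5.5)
The argument is a standard Grönwall estimate. The only non-routine point is that the coefficients depend on the law $\mu_r$ through $M_2(\mu_r)$, which has to be bounded by the same quantity being estimated.

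First I control the measure term. For $r\in[t,T]$,
\[
M_2(\mu_r)=\E^{\P}[|X_r|^2+I_r^2]\le 1+\E^{\P}[|X_r|^2]\le 1+\E^{\P}\Big[\sup_{u\in[t,r]}|X_u|^2\Big].
\]
Combined with the growth bound \eqref{eq:coeff_growth}, this gives
\[
|b(r,X_r,\mu_r)|^2+|\sigma(r,X_r,\mu_r)|^2\le C\big(1+|X_r|^2+\phi(r)\big),
\qquad \phi(r):=\E^{\P}\Big[\sup_{u\in[t,r]}|X_u|^2\Big].
\]

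Before applying Grönwall I must check that $\phi$ is finite. This is the main obstacle, since Definition \ref{def:stopping_rule} does not require square integrability of $X$ a priori.
\begin{itemize}
\item I first note that \eqref{eq:mu_definition} requires $\mu_r\in\Pc_2(\Sbf)$ for the coefficients to be defined. Hence $\E^{\P}|X_r|^2<\infty$ for each $r$, which is enough to make sense of the bound above.
\item For finiteness of the supremum I localize. Let $\tau_k:=\inf\{s\ge t:|X_s|\ge k\}\wedge T$, and run the argument below for $\phi_k(r):=\E^{\P}[\sup_{u\le r\wedge\tau_k}|X_u|^2]$, which is finite when $\E|X_t|^2<\infty$.
\item The term $\phi(r)$ from the law is not localized, but $\E^{\P}|X_r|^2$ is at least measurable in $r$. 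To make it integrable I would use the fact that the integrals in \eqref{eq:dynamics} are well defined, which requires $\int_t^T|\sigma|^2dr<\infty$ a.s.
\item If integrability of $\E|X_r|^2$ in $r$ is not automatic, I would interpret $\mu\in C([t,T],\Pc_2)$ as implicitly part of the definition of a solution. Either way one gets $\int_t^T\E|X_r|^2dr<\infty$, which suffices.
\end{itemize}

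Next I estimate the equation. From \eqref{eq:dynamics}, using $(a+b+c)^2\le 3(a^2+b^2+c^2)$, Cauchy–Schwarz on the $dr$ integral, the Burkholder–Davis–Gundy inequality on the stochastic integral, and $I_r\le 1$, I get
\[
\phi_k(s)\le 3\E|X_t|^2+C\int_t^s\big(1+\phi_k(r)+\E|X_r|^2\big)\,dr .
\]
Here $\E|X_r|^2\le \phi(r)$. Bounding this by $\liminf_k\phi_k(r)$ via Fatou would require care. Instead I first derive the non-sup estimate $g(r):=\E|X_r|^2$ directly. By Itô/BDG applied to the same equation,
\[
g(s)\le C\Big(\E|X_t|^2+\int_t^s(1+g(r))\,dr\Big).
\]
Grönwall, using the integrability of $g$ established above, then gives $g\le C(1+\E|X_t|^2)$ uniformly on $[t,T]$. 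Substituting this bound into the localized supremum estimate and applying Grönwall to $\phi_k$ gives $\phi_k(T)\le C(1+\E|X_t|^2)$, with a constant depending only on $T,C,L$. Letting $k\to\infty$ by monotone convergence yields \eqref{eq:moment_estimate}, since $\E|X_t|^2=\int|x|^2\nu(dx,di)$.

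Finally, $\P^t\in\Pc_2(\Om_t)$. Take the reference point $(0,0,i_0)$ in $\Om_t$. The squared distance to this point is at most
\[
\|X\|_\infty^2+\|W_\cdot-W_t\|_\infty^2+(T+2)^2,
\]
using that the $\I$-component lives in a compact set of diameter at most $T+2$. The first term has finite expectation by \eqref{eq:moment_estimate}. The second has finite expectation by Doob's inequality, since $W$ is a Brownian motion under $\P$.
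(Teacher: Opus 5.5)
The paper states this lemma without proof, treating it as the standard BDG--Gr\"onwall estimate. Your localized and non-sup estimates are that argument, and they are correct \emph{once} you know that $g(r)=\E^{\P}|X_r|^2$ is integrable (or bounded) on $[t,T]$. That a priori step is where your proposal has a genuine hole. The Definition only forces $\mu_r\in\Pc_2(\Sbf)$, i.e.\ $g(r)<\infty$ for each $r$. The inequality $g(s)\le A+C\int_t^s g(r)\,dr$ is vacuous for a function that is finite everywhere but not integrable near some point, for example $g(r)=1/(r-s_0)$ for $r>s_0$. So Gr\"onwall cannot be started.

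Your first justification does not supply integrability. The a.s.\ finiteness of $\int_t^T|\sigma(r,X_r,\mu_r)|^2I_r\,dr$ gives no control of $\int_t^T\E|X_r|^2\,dr$, because \eqref{eq:coeff_growth} is only an upper bound on the coefficients. Your fallback builds $\mu\in C([t,T],\Pc_2(\Sbf))$ into the definition, which proves the lemma only for a possibly smaller class than $\Pc_W(t,\nu)$. Moreover, the paper itself invokes the moment estimate to \emph{obtain} integrability of measure flows of solutions (e.g.\ in the proof of Lemma~\ref{lem:input_law_invariance}), so this should not be presupposed.

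The missing idea is to turn the pathwise finiteness into deterministic integrability using the Lipschitz condition in $x$. That condition gives lower bounds, which the growth condition does not. Put $\kappa_b(r):=|b(r,0,\mu_r)|$ and $\kappa_\sigma(r):=|\sigma(r,0,\mu_r)|$; these are deterministic. By \eqref{eq:coeff_lipschitz}, $\kappa_b(r)\le|b(r,X_r,\mu_r)|+L|X_r|$ and $\kappa_\sigma(r)^2\le2|\sigma(r,X_r,\mu_r)|^2+2L^2|X_r|^2$.

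Suppose $p_s:=\P(I_s=1)>0$. Choose $\omega$ with $I_s(\omega)=1$ in the full-measure set on which both integrals in \eqref{eq:dynamics} exist. Then $I_r(\omega)=1$ on $[t,s]$, and since $X(\omega)$ is continuous, $\int_t^s(\kappa_b+\kappa_\sigma^2)\,dr<\infty$. Now rerun your localized estimate with $|b(r,X_r,\mu_r)|\le\kappa_b(r)+L|X_r|$ and $|\sigma(r,X_r,\mu_r)|^2\le2\kappa_\sigma(r)^2+2L^2|X_r|^2$ in place of the growth bound. This gives $\E\sup_{[t,s]}|X|^2<\infty$, so $g$ is bounded on $[t,s]$. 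Your Gr\"onwall argument then yields $\E\sup_{[t,s]}|X|^2\le C(1+\E|X_t|^2)$ with $C$ independent of $s$.

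If $p_T>0$, take $s=T$. Otherwise let $s_0:=\inf\{s:p_s=0\}$. By right-continuity $p_{s_0}=0$, so all particles are stopped a.s.\ from $s_0$ on and $X$ is frozen on $[s_0,T]$. Letting $s\uparrow s_0$ by monotone convergence (trivial if $s_0=t$) gives \eqref{eq:moment_estimate}. The remaining parts of your proposal, including the conclusion that $\P^t\in\Pc_2(\Om_t)$, are fine.
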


\begin{remark}\label{rem:unrestricted_history}
Only the law of $(X_t,I_{t-})$ is prescribed, so admissible laws may have
different state histories on $[0,t)$. Consequently, the full canonical laws
need not form a compact family, and the laws induced by strong stopping rules
need not be dense among those induced by weak stopping rules in a Wasserstein
topology on the full path space $\Om$. We therefore state the approximation
for their images $\P^t$ on $\Om_t$, which contain exactly the coordinates
entering the dynamics and reward on $[t,T]$.
\end{remark}

For $\P\in\Pc_W(t,\nu)$, define
\begin{equation}\label{eq:objective}
    J_t(\P)
    :=
    \E^{\P}\left[
        \int_t^T
        f(s,X_s,\mu_s)I_s
        \mathbf 1_{[\alpha,1]}\big(\E^{\P}[I_s]\big)
        \,ds
    \right].
\end{equation}
The moment estimate in Lemma \ref{lem:moment_estimate} and the growth bound on $f$ ensure that
$J_t(\P)$ is finite for every $\P\in\Pc_W(t,\nu)$.
The weak and strong value functions are
\begin{equation}\label{eq:value_function}
    V_W(t,\nu)
    :=\sup_{\P\in\Pc_W(t,\nu)}J_t(\P),
    \qquad
    V_S(t,\nu)
    :=\sup_{\P\in\Pc_S(t,\nu)}J_t(\P).
\end{equation}
The set of weak optimizers is denoted by
\[
    \Pc_W^*(t,\nu)
    :=\{\P\in\Pc_W(t,\nu):J_t(\P)=V_W(t,\nu)\};
\]
the notation itself does not assert that an optimizer exists.

Define the constrained classes
\begin{equation}\label{eq:constrained_classes}
\begin{split}
    \widetilde{\Pc}_W(t,\nu)
    &:=\left\{
      \P\in\Pc_W(t,\nu):
      \int_t^T
      \big(\alpha-\E^{\P}[I_s]\big)^+
      \E^{\P}[I_s] \,ds=0
      \right\},\\
    \widetilde{\Pc}_S(t,\nu)
    &:=\widetilde{\Pc}_W(t,\nu)\cap\Pc_S(t,\nu).
\end{split}
\end{equation}

\subsection{Connection with the quantile-truncated stopping problem}

We now relate this formulation to the motivating problem
\begin{equation}\label{eq:original_quantile_problem}
    \sup_{\tau}
    \E\left[
        G_{\tau\wedge F_{\tau}^{-1}(\beta)}
    \right],
\end{equation}
where $F_{\tau}(s):=\P(\tau\le s)$ is the distribution function of
$\tau$. Here $\beta$ denotes the quantile level in
\eqref{eq:original_quantile_problem} to distinguish it from the
survival threshold $\alpha$ used throughout this paper.

Given $\P\in\Pc_W(t,\nu)$, set
\[
    \tau:=\vartheta_t(I).
\]
Then
\[
    \E^\P[I_s]=\P(\tau>s)=1-F_\tau(s).
\]
Define the deterministic survival cutoff
\begin{equation}\label{eq:survival_quantile_cutoff}
    q_\alpha(\tau)
    :=
    \inf\big\{s\in[t,T]:\E^\P[I_s]<\alpha\big\}
    =
    \inf\big\{s\in[t,T]:F_\tau(s)>1-\alpha\big\},
\end{equation}
with the convention that the infimum of the empty set is $T+1$.
Since $s\mapsto \E^\P[I_s]$ is nonincreasing, we have
\[
    \mathbf 1_{[\alpha,1]}(\E^\P[I_s])
    =
    \mathbf 1_{\{s<q_\alpha(\tau)\}},
\]
except possibly at the single cutoff time.
Consequently,
\begin{equation}\label{eq:quantile_indicator_identity}
    I_s\mathbf 1_{[\alpha,1]}(\E^\P[I_s])
    =
    \mathbf 1_{\{s<\tau\wedge q_\alpha(\tau)\}},
    \qquad ds\otimes d\P\text{-a.e.}
\end{equation}

For the accumulated running reward process
\[
    G_u
    :=
    \int_t^{u\wedge T} f(s,X_s,\mu_s)\,ds,
\]
identity \eqref{eq:quantile_indicator_identity} yields
\begin{equation}\label{eq:objective_as_quantile_stopping}
\begin{split}
    J_t(\P)
    =
    \E^\P\left[
        \int_t^T
        f(s,X_s,\mu_s)
        I_s\mathbf 1_{[\alpha,1]}(\E^\P[I_s])\,ds
    \right]
    =
    \E^\P\left[
        G_{\tau\wedge q_\alpha(\tau)}
    \right].
\end{split}
\end{equation}
Thus the cutoff in the objective is precisely the quantile determined by the
law of the stopping time itself.

Specifically, $q_\alpha(\tau)$ is the upper generalized
$(1-\alpha)$-quantile
\[
    F_{\tau,+}^{-1}(1-\alpha)
    :=
    \inf\{s:F_\tau(s)>1-\alpha\}.
\]
When this quantile is unique (for example, when $F_\tau$ is strictly
increasing near the level $1-\alpha$), it agrees with the usual
generalized inverse $F_\tau^{-1}(1-\alpha)$. Hence
\eqref{eq:original_quantile_problem} corresponds to the present
notation through the identification
\[
    \beta=1-\alpha.
\]
Our problem extends \eqref{eq:original_quantile_problem} to a mean-field
setting: both the state dynamics and the running reward depend on the joint
population law $\mu_s=\Lc^\P(X_s,I_s)$. The strong class consists of stopping
rules generated by the Brownian information after time $t$, whereas the weak
class is a relaxation that allows more general admissible laws.

\section{Equivalence of the strong and weak formulations}

We need the following lemmas to prove the equivalence between strong and weak formulations. 
\begin{lemma}
\label{lem:constraint_reduction}
For $A\in\{S,W\}$, we have that
\begin{equation}\label{eq:constraint_reduction_values}
    \sup_{\P\in\Pc_A(t,\nu)}J_t(\P)
    =
    \sup_{\P\in\widetilde{\Pc}_A(t,\nu)}J_t(\P).
\end{equation}
Moreover, if $\P\in\widetilde{\Pc}_A(t,\nu)$, then
\begin{equation}\label{eq:objective_on_constrained_set}
    J_t(\P)
    =
    \E^{\P}\left[
        \int_t^T f(s,X_s,\mu_s)I_s\,ds
    \right].
\end{equation}
\end{lemma}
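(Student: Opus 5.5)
The second claim is immediate, so we dispose of it first. If $\P\in\widetilde{\Pc}_A(t,\nu)$, the nonnegative integrand $(\alpha-\E^\P[I_s])^+\E^\P[I_s]$ has zero integral. Hence for a.e.\ $s$ we have $\E^\P[I_s]\in\{0\}\cup[\alpha,1]$. Since $s\mapsto\E^\P[I_s]$ is nonincreasing and right-continuous, this in fact holds for every $s\in[t,T]$.

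The indicator $\mathbf 1_{[\alpha,1]}(\E^\P[I_s])$ can differ from $1$ only when $\E^\P[I_s]=0$. In that case $I_s=0$ $\P$-a.s., so $I_s\mathbf 1_{[\alpha,1]}(\E^\P[I_s])=I_s$ $ds\otimes d\P$-a.e. Lemma~\ref{lem:moment_estimate} and \eqref{eq:f_growth} give integrability, and \eqref{eq:objective_on_constrained_set} follows.

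For \eqref{eq:constraint_reduction_values}, the inequality "$\ge$" is trivial since $\widetilde{\Pc}_A\subset\Pc_A$. For "$\le$", fix $\P\in\Pc_A(t,\nu)$ and set $\theta:=q_\alpha(\vartheta_t(I))$ as in \eqref{eq:survival_quantile_cutoff}. This is a deterministic time. We build a modified law $\widetilde\P$ by stopping every surviving particle at $\theta$. Concretely, let $\widetilde I_s:=I_s\mathbf 1_{\{s<\theta\}}$ (with $\widetilde I_{t-}=I_{t-}$, and if $\theta=t$ all are stopped at $t$). Let $\widetilde X$ solve \eqref{eq:dynamics} with $\widetilde I$ in place of $I$ and with $\widetilde\mu_r:=\Lc(\widetilde X_r,\widetilde I_r)$.

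On $[t,\theta)$ we have $\widetilde I=I$. Hence the original $X$ solves the modified McKean--Vlasov equation there, and the modified flow of laws coincides with the original one; formally this follows from uniqueness under the Lipschitz condition \eqref{eq:coeff_lipschitz}. After $\theta$, all particles are frozen, so we simply set $\widetilde X_s:=X_{\theta}$ for $s\ge\theta$. This is consistent with \eqref{eq:dynamics} because $\widetilde I=0$ there, regardless of $\widetilde\mu$. Then $\widetilde\P:=\Lc(\widetilde X,W,\widetilde I)$ on $\Om$, where the pre-$t$ history is taken from $(X,I)$.

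We then check three properties of $\widetilde\P$.

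\emph{$W$ remains a Brownian motion in the new filtration.} The new canonical filtration is generated by functions of the old $\F$-adapted processes, because $\theta$ is deterministic. It is therefore contained in the old filtration, and $W$ is still an $\F$-Brownian motion under $\widetilde\P$.

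\emph{Strong rules are preserved.} If $\P$ is strong, then $\widetilde I_s$ is $\Gc^t_s$-measurable, so $\widetilde\P$ is strong.

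\emph{The constraint holds.} We have $\E[\widetilde I_s]=\E[I_s]\ge\alpha$ for $s<\theta$ and $\E[\widetilde I_s]=0$ for $s\ge\theta$. The first claim uses the definition of $\theta$ as an infimum of $\{s:\E[I_s]<\alpha\}$ together with right-continuity; note $\E[I_\theta]<\alpha$ itself. Hence $\widetilde\P\in\widetilde{\Pc}_A(t,\nu)$.

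Finally, $J_t(\widetilde\P)=J_t(\P)$. On $[t,\theta)$ the integrands agree, because the laws, states and indicators coincide and $\E[I_s]\ge\alpha$. On $[\theta,T]$ both integrands vanish: in $J_t(\widetilde\P)$ because $\widetilde I=0$, and in $J_t(\P)$ because $\E^\P[I_s]\le\E^\P[I_\theta]<\alpha$ for all $s\ge\theta$, so the indicator is zero.

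The main technical point is the well-posedness and identification step. One must confirm that the stopped system is a legitimate element of $\Pc_A(t,\nu)$, with the correct canonical representation and filtration. One must also confirm that the freeze after $\theta$ does not alter the measure flow before $\theta$. Both follow from determinism of $\theta$ and pathwise construction, without invoking the fixed-point theory again.
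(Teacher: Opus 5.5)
Your proposal is correct and follows essentially the same route as the paper: the deterministic cutoff $\theta$, stopping all survivors at $\theta$, identification of the system before $\theta$, and the a.e.\ reading of the integral constraint for the second identity. There are two harmless slips: right-continuity only yields $\E^{\P}[I_\theta]\le\alpha$, not $<\alpha$; and the pointwise constraint $\E^{\P}[I_s]\in\{0\}\cup[\alpha,1]$ holds for $s\in[t,T)$ but can fail at $s=T$. Both concern a single time and do not affect the $ds$-integrals.
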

\begin{proof}
Fix $\P\in\Pc_A(t,\nu)$ and set
$    p_s:=\E^{\P}[I_s],
    s\in[t,T].$
Since $I$ is bounded, right-continuous, and non-increasing, the function $p$
is right-continuous and non-increasing. Define the deterministic cutoff
\begin{equation}\label{eq:deterministic_cutoff}
    \theta
    :=\inf\{s\in[t,T]:p_s<\alpha\},
    \qquad
    \inf\varnothing:=T+1.
\end{equation}
Let
\[
    \bar I_{t-}:=I_{t-},
    \qquad
    \bar I_s:=I_s\mathbf 1_{\{s<\theta\}},
    \quad s\in[t,T],
\]
and set $\bar I_s:=I_s$ for $s<t$. On the same probability space, let
$\bar X$ be the solution on $[t,T]$ initialized at $X_t$ and driven by $W$:
\begin{equation}\label{eq:cutoff_dynamics}
    \bar X_s
    =X_t
     +\int_t^s b(r,\bar X_r,\bar\mu_r)\bar I_r\,dr
     +\int_t^s \sigma(r,\bar X_r,\bar\mu_r)\bar I_r\,dW_r,
    \qquad
    \bar\mu_r:=\Lc^{\P}(\bar X_r,\bar I_r).
\end{equation}
Set $\bar X_s:=X_s$ for $s<t$. Existence and pathwise uniqueness follow from
the Lipschitz assumption and the standard Picard argument for a
McKean--Vlasov SDE with the bounded adapted multiplier $\bar I$.

Since $\bar I=I$ on $[t,\theta)$, pathwise uniqueness on compact
subintervals of $[t,\theta)$ gives
\begin{equation}\label{eq:cutoff_same_before_theta}
    \bar X_s=X_s,
    \qquad
    \bar\mu_s=\mu_s,
    \qquad t\le s<\theta.
\end{equation}
Let
    $\bar\P:=\Lc^{\P}(\bar X,W,\bar I).$
The natural filtration of $(\bar X,W,\bar I)$ is a subfiltration of the
original weak filtration, so $W$ remains Brownian. If $A=S$, then $\bar I$ is
still adapted to $\mathbb G^t$ because $\theta$ is deterministic. Hence
$\bar\P\in\Pc_A(t,\nu)$.

For $s<\theta$, $p_s\ge\alpha$. For $s>\theta$, monotonicity and the
definition of $\theta$ give $p_s<\alpha$, while $\bar I_s=0$. The value at
the single time $\theta$ does not affect integration with respect to $ds$.
Together with \eqref{eq:cutoff_same_before_theta}, these observations give
\[
    J_t(\bar\P)=J_t(\P).
\]
Moreover,
\[
    \E^{\bar\P}[I_s]
    =\begin{cases}
       p_s, &s<\theta,\\
       0,   &s\ge\theta,
     \end{cases},
\]
so
$\bar\P\in\widetilde{\Pc}_A(t,\nu)$. This proves
\eqref{eq:constraint_reduction_values}.

The reverse inequality follows from
$\widetilde{\Pc}_A(t,\nu)\subseteq\Pc_A(t,\nu)$.

For $\P\in\widetilde{\Pc}_A(t,\nu)$, the integrand in
\eqref{eq:constrained_classes} is nonnegative,
\[
    \big(\alpha-p_s\big)^+p_s=0
\]
for Lebesgue-a.e. $s$. Hence, for almost every $s$, either $p_s=0$ or
$p_s\ge\alpha$. If $p_s=0$, then $I_s=0$, $\P$-a.s.; if $p_s\ge\alpha$, the
threshold indicator in \eqref{eq:objective} equals one. Thus
\[
    I_s\mathbf 1_{[\alpha,1]}(p_s)=I_s
\]
for $ds\otimes d\P$-a.e. $(s,\omega)$, which gives
\eqref{eq:objective_on_constrained_set}.
\end{proof}

To approximate a weak rule by strong ones, we discretize its stopping
decision and reserve the initial Brownian interval $[t,t_1^m]$ for independent
randomization.

Fix a sequence of partitions
\begin{equation}\label{eq:partitions}
    \pi_m:
    \quad
    t=t_0^m<t_1^m<\cdots<t_m^m=T,
    \qquad
    |\pi_m|:=\max_{1\le i\le m}(t_i^m-t_{i-1}^m)
    \longrightarrow0.
\end{equation}
Write
    $\delta_m:=t_1^m-t$.
For a survival path on $[t,T]$, write $\tau:=\vartheta_t(I)$.

\begin{lemma}[Adapted discrete approximation]
\label{lem:approximate_by_discrete_stopping}
Let $t<T$ and $\P\in\Pc_W(t,\nu)$. There exists an $\F$-stopping time
$\tau^m$ taking values in
    $\{t,t_1^m,\ldots,t_m^m,T+1\}$
and an associated survival process
\begin{equation}\label{eq:Im_definition}
    I_{t-}^m:=I_{t-},
    \qquad
    I_s^m:=I_{t-}\mathbf 1_{\{s<\tau^m\}},
    \quad s\in[t,T],
\end{equation}
such that
\begin{equation}\label{eq:Im_path_error}
    |\tau^m-\tau|\le |\pi_m|,
    \qquad
    \int_t^T|I_s^m-I_s|\,ds\le|\pi_m|,
    \qquad \P\text{-a.s.}
\end{equation}
If $\P\in\widetilde{\Pc}_W(t,\nu)$, $I^m$ may be chosen so that
\begin{equation}\label{eq:Im_constraint}
    \int_t^T
      \big(\alpha-\E^{\P}[I_s^m]\big)^+
      \E^{\P}[I_s^m] \,ds=0.
\end{equation}
Set
\begin{equation}\label{eq:Wm_definition}
    W_s^m
    :=\begin{cases}
        W_s, &0\le s\le t,\\
        W_t, &t<s\le t_1^m,\\
        W_t+W_s-W_{t_1^m}, &t_1^m<s\le T,
      \end{cases},
\end{equation}
and define $X^m$ by
\begin{equation}\label{eq:Xm_before_t1}
    X_s^m:=X_s,\quad s<t,
    \qquad
    X_s^m:=X_t,\quad s\in[t,t_1^m],
\end{equation}
and, for $s\in[t_1^m,T]$, by
\begin{equation}\label{eq:dynamics_truncated_W}
    X_s^m
    =X_t
     +\int_{t_1^m}^s b(r,X_r^m,\mu_r^m)I_r^m\,dr
     +\int_{t_1^m}^s \sigma(r,X_r^m,\mu_r^m)I_r^m\,dW_r,
    \qquad
    \mu_r^m:=\Lc^{\P}(X_r^m,I_r^m).
\end{equation}
Then
\begin{equation}\label{eq:Xm_convergence}
    \E^{\P}\left[
        \sup_{s\in[t,T]}|X_s^m-X_s|^2
    \right]\longrightarrow0,
\end{equation}
and
\begin{equation}\label{eq:truncated_projected_law_convergence}
\begin{split}
    \Wc_{2,t}\Big(
      &\Lc^{\P}\big(
         X^m_{|[t,T]},
         (W_s^m-W_t)_{s\in[t,T]},
         I^m_{|\{t-\}\cup[t,T]}
      \big),
      \P^t
    \Big)
    \longrightarrow0.
\end{split}
\end{equation}
\end{lemma}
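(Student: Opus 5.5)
The plan is to build $\tau^m$ by rounding $\tau=\vartheta_t(I)$ \emph{up} to the grid, and then to control $X^m-X$ by a Gronwall estimate; the projected-law convergence then follows from the trivial coupling on the same probability space. Concretely, set $\tau^m:=t$ on $\{I_{t-}=0\}\cup\{\tau=t\}$ and $\tau^m:=T+1$ on $\{\tau=T+1\}$. On $\{\tau\in(t_{k-1}^m,t_k^m]\}$ set $\tau^m:=t_k^m$.

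\textbf{Step 1: the discrete stopping time.} Since $\{\tau^m\le t_k^m\}=\{\tau\le t_k^m\}\in\Fc_{t_k^m}$ and $\tau^m$ is grid-valued, $\tau^m$ is an $\F$-stopping time. By construction $0\le\tau^m-\tau\le|\pi_m|$. Moreover $\int_t^T|I^m_s-I_s|\,ds=(\tau^m\wedge T)-(\tau\wedge T)\le|\pi_m|$; when $\tau=T+1$ the two paths agree, and when $\tau\in(t_{m-1}^m,T]$ we have $\tau^m=T$. This gives \eqref{eq:Im_path_error}.

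\textbf{Step 2: the constraint.} For $s\in[t_{k-1}^m,t_k^m)$ one has $\{s<\tau^m\}=\{\tau>t_{k-1}^m\}$, so $p^m_s:=\E^\P[I^m_s]=p_{t_{k-1}^m}$, and also $p^m_s=p_s$ at $s=T$. Hence the survival flow of $I^m$ only takes values that $p$ takes at grid points. It therefore suffices to show that $p_s\in\{0\}\cup[\alpha,1]$ for \emph{every} $s$, not just for a.e.\ $s$. This holds because $p$ is nonincreasing and right-continuous. If $p_{s_0}\in(0,\alpha)$ for some $s_0<T$, then $p\in(0,\alpha)$ on a right neighborhood of $s_0$, which contradicts $\P\in\widetilde\Pc_W(t,\nu)$. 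The endpoint $s_0=T$ is a null set and is harmless in \eqref{eq:Im_constraint}. This yields \eqref{eq:Im_constraint}.

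\textbf{Step 3: $L^2$ convergence of $X^m$.} First, a uniform moment bound $\sup_m\E^\P[\sup_s|X^m_s|^2]<\infty$ follows exactly as in Lemma \ref{lem:moment_estimate}, since $I^m$ is $\F$-adapted and $W$ is $\F$-Brownian; well-posedness of \eqref{eq:dynamics_truncated_W} follows by the Picard argument used in Lemma \ref{lem:constraint_reduction}. For $s\in[t,t_1^m]$, write $X^m_s-X_s=X_t-X_s$; by BDG and dominated convergence, $\E[\sup_{[t,t_1^m]}|X_s-X_t|^2]\to0$ as $\delta_m\to0$. For $s\ge t_1^m$, subtract \eqref{eq:dynamics} from \eqref{eq:dynamics_truncated_W}. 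The difference consists of the original increment over $[t,t_1^m]$, which is small as above, plus integrals over $[t_1^m,s]$. In these integrals I split each integrand as
\[
b(r,X^m_r,\mu^m_r)I^m_r-b(r,X_r,\mu_r)I_r=\big(b(r,X^m_r,\mu^m_r)-b(r,X_r,\mu_r)\big)I^m_r+b(r,X_r,\mu_r)(I^m_r-I_r),
\]
and similarly for $\sigma$. The first piece is bounded by $L(|X^m_r-X_r|+\Wc_2(\mu^m_r,\mu_r))$, where $\Wc_2(\mu^m_r,\mu_r)^2\le\E|X^m_r-X_r|^2+\E|I^m_r-I_r|$. The second piece contributes $\E\int_t^T C(1+|X_r|^2+M_2(\mu_r))|I^m_r-I_r|\,dr$. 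This tends to $0$ by dominated convergence: the dominating function is $C(1+\sup|X|^2)$, which is integrable by Lemma \ref{lem:moment_estimate}, and $I^m_r\to I_r$ $ds\otimes d\P$-a.e.\ by Step 1. BDG and Gronwall then give \eqref{eq:Xm_convergence}.

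\textbf{Step 4: projected laws.} Use the coupling of the two laws in \eqref{eq:truncated_projected_law_convergence} given by the same $\omega$. Then $\Wc_{2,t}^2$ is at most
\[
\E\big[\sup_{[t,T]}|X^m-X|^2\big]+\E\big[\sup_{[t,T]}|W^m-W|^2\big]+\E\big[|\tau^m-\tau|^2\big].
\]
The first term vanishes by Step 3, and the last term is at most $|\pi_m|^2$ by Step 1. For the middle term, note $\sup_{[t,T]}|W^m_s-W_s|\le 2\sup_{[t,t_1^m]}|W_s-W_t|$, whose second moment is $O(\delta_m)$.

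The step I expect to be the main obstacle is Step 3. The delicate points are handling the shifted Brownian driver, since the estimate compares $\int_{t_1^m}$ with $\int_t$, and obtaining the cross term in $|I^m-I|$ through dominated convergence rather than a rate. The main risk there is ensuring the uniform integrability of $\sup|X|^2$ needed for that term.
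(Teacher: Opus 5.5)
Your proof is correct and follows the paper's overall scheme: round $\tau$ up to the grid, split $b(r,X^m_r,\mu^m_r)I^m_r-b(r,X_r,\mu_r)I_r$ and apply Gronwall, then use the same-$\omega$ coupling. It differs from the paper at one point that matters. On $\{I_{t-}=1,\ \tau=t\}$ you keep $\tau^m=t$, whereas the paper rounds this event up to $\tau^m=t_1^m$.

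Your choice gives a case-free proof of the constraint. You get $\E^\P[I^m_s]=p_{t^m_{k-1}}$ on every cell $[t^m_{k-1},t^m_k)$, including $p_t$ on $[t,t_1^m)$, so the pointwise property $p_s\in\{0\}\cup[\alpha,1]$ for $s<T$ suffices. With the paper's rounding, $\E^\P[I^m_s]=\E^\P[I_{t-}]$ on $[t,t_1^m)$, which may lie in $(0,\alpha)$. That is why the paper splits into two cases: $\E^\P[I_{t-}]<\alpha$, where the constraint forces $I_t=0$ a.s.\ and one takes $\tau^m=t$; and $\E^\P[I_{t-}]\ge\alpha$.

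The paper's choice gives a different benefit: $I^m_{|[t,t_1^m)}$ is a function of $I_{t-}$ alone, and the proof of Lemma~\ref{lem:equivalence_discrete_stopping} uses this explicitly. A weak rule may stop at time $t$ using randomness independent of $(X_t,I_{t-},W_{\cdot\wedge t})$, for instance $t=0$, $\nu=\delta_{(0,1)}$, and an independent fair coin. No $\mathbb G^t$-adapted process can reproduce that decision at time $t$. Postponing it to $t_1^m$ lets the reserved Brownian increments purify it. So your construction proves the lemma as stated, but the paper's rounding at $\tau=t$ would have to replace it before the purification step.

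A minor point: the cross term you flag as the main risk needs no dominated convergence. The pathwise bound $\int_t^T|I^m_r-I_r|\,dr\le|\pi_m|$ already gives the rate $C|\pi_m|\,\E^\P[1+\sup_{[t,T]}|X|^2]$.
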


\begin{proof}
Begin with the unconstrained case and define the $\F$-stopping time
\begin{equation}\label{eq:tau_rounding}
    \tau^m
    :=\begin{cases}
       t, &I_{t-}=0,\\
       \min\{t_i^m:1\le i\le m,\ t_i^m\ge\tau\},
          &I_{t-}=1,\ \tau\le T,\\
       T+1, &I_{t-}=1,\ \tau=T+1.
    \end{cases}
\end{equation}
The estimates in \eqref{eq:Im_path_error} follow directly.

Now suppose that $\P$ is constrained, and write
    $p_s:=\E^{\P}[I_s]$.
The function
\[
    s\longmapsto (\alpha-\E^{\P}[I_s])^+\E^{\P}[I_s]
\]
is nonnegative and right-continuous. Since its integral on $[t,T]$ is zero,
it must vanish at every $s\in[t,T)$; otherwise, right-continuity would make it
strictly positive on an interval to the right. Thus
\begin{equation}\label{eq:constraint_pointwise}
    \E^{\P}[I_s]\in\{0\}\cup[\alpha,1],
    \qquad s\in[t,T).
\end{equation}
If $0\le \E^{\P}[I_{t-}]<\alpha$, then
$\E^{\P}[I_t]\le \E^{\P}[I_{t-}]<\alpha$, and
\eqref{eq:constraint_pointwise} implies $\E^{\P}[I_t]=0$. Since $I_t\ge0$,
we have $I_t=0$, $\P$-a.s. Monotonicity then gives $I_s=0$ for all
$s\ge t$, $\P$-a.s. In this case, take $\tau^m:=t$, so that $I^m=0$ on
$[t,T]$.

If $\E^{\P}[I_{t-}]\ge\alpha$, then on $[t,t_1^m)$,
$I_s^m=I_{t-}$ and hence
    $\E^{\P}[I_s^m]=\E^{\P}[I_{t-}]\ge\alpha$.
For $i=1,\ldots,m-1$ and $s\in[t_i^m,t_{i+1}^m)$,
    $I_s^m=I_{t_i^m}$,
so
\[
    \E^{\P}[I_s^m]=p_{t_i^m}
    \in\{0\}\cup[\alpha,1]
\]
by \eqref{eq:constraint_pointwise}. The value at the single time $T$ does not
affect the time integral. This proves \eqref{eq:Im_constraint}.

For the state estimate, \eqref{eq:Wm_definition} gives
\begin{equation}\label{eq:Wm_error}
\begin{split}
    \E^{\P}\left[
       \sup_{s\in[t,T]}
       \big|(W_s^m-W_t)-(W_s-W_t)\big|^2
    \right]
    &\le
    C\E^{\P}\left[
       \sup_{s\in[t,t_1^m]}|W_s-W_t|^2
    \right]
    \le C\delta_m.
\end{split}
\end{equation}
The standard moment estimate, applied also to \eqref{eq:dynamics_truncated_W},
gives
\begin{equation}\label{eq:Xm_uniform_moment}
    \E^{\P}\left[
       \sup_{s\in[t,T]}|X_s|^2
       +\sup_{s\in[t,T]}|X_s^m|^2
    \right]
    \le C\big(1+\E^{\P}|X_t|^2\big).
\end{equation}
On the reserved interval,
\begin{equation}\label{eq:reserved_state_error}
    \E^{\P}\left[
       \sup_{s\in[t,t_1^m]}|X_s-X_t|^2
    \right]
    \le C\delta_m.
\end{equation}
For $r\ge t_1^m$, use the coupling
$((X_r^m,I_r^m),(X_r,I_r))$ to obtain
\begin{equation}\label{eq:mu_coupling_bound}
    \Wc_2^2(\mu_r^m,\mu_r)
    \le
    \E^{\P}\big[|X_r^m-X_r|^2+|I_r^m-I_r|^2\big].
\end{equation}
Consider the identity
\[
\begin{split}
    &b(r,X_r^m,\mu_r^m)I_r^m-b(r,X_r,\mu_r)I_r
    =
    \big(b(r,X_r^m,\mu_r^m)-b(r,X_r,\mu_r)\big)I_r^m
    +b(r,X_r,\mu_r)(I_r^m-I_r),
\end{split}
\]
and its analogue for $\sigma$. Applying the Lipschitz and growth assumptions,
\eqref{eq:mu_coupling_bound}, the Burkholder--Davis--Gundy inequality, and
\eqref{eq:Im_path_error} to these identities yields, for $u\in[t_1^m,T]$,
\begin{equation}\label{eq:Xm_gronwall_pre}
\begin{split}
    \E^{\P}\left[
       \sup_{s\in[t,u]}|X_s^m-X_s|^2
    \right]
    \le\;&C\delta_m
       +C\int_{t_1^m}^u
          \E^{\P}\left[
             \sup_{v\in[t,r]}|X_v^m-X_v|^2
          \right]dr\\
       &+C\E^{\P}\left[
          \left(1+\sup_{r\in[t,T]}|X_r|^2\right)
          \int_t^T|I_r^m-I_r|\,dr
       \right]\\
    \le\;&C(\delta_m+|\pi_m|)
       +C\int_{t_1^m}^u
          \E^{\P}\left[
             \sup_{v\in[t,r]}|X_v^m-X_v|^2
          \right]dr.
\end{split}
\end{equation}
Gronwall's lemma proves \eqref{eq:Xm_convergence}. To finish, use the original
probability space to couple
\[
    \big(X^m_{|[t,T]},(W_s^m-W_t)_{s\in[t,T]},I^m\big),
    \big(X_{|[t,T]},(W_s-W_t)_{s\in[t,T]},I\big).
\]
The first component converges in $L^2$ by \eqref{eq:Xm_convergence}, the
second by \eqref{eq:Wm_error}, and the stopping component by
\eqref{eq:metric_I} and \eqref{eq:Im_path_error}. This proves
\eqref{eq:truncated_projected_law_convergence}.
\end{proof}

The Brownian purification of the grid-valued decisions uses the following law invariance property of the McKean--Vlasov equation.

\begin{lemma}[Law invariance]\label{lem:input_law_invariance}
Fix $a<T$. For $k\in\{1,2\}$, let
$(\Omega^k,\mathcal F^k,\mathbb F^k,\mathbb P^k)$ support a triple
$(\xi^k,B^k,A^k)$ such that:
\begin{enumerate}
\item[(i)] $\xi^k$ is an $\R^n$-valued, square-integrable
$\mathcal F_a^k$-measurable random variable;
\item[(ii)] $B^k$ is a $d$-dimensional $\mathbb F^k$-Brownian motion on
$[a,T]$;
\item[(iii)] $A^k$ is an $\mathbb F^k$-adapted survival process with paths
in $\I([a,T])$;
\item[(iv)]
\[
    \Lc(\xi^1,B^1,A^1)=\Lc(\xi^2,B^2,A^2).
\]
\end{enumerate}
For each $k\in\{1,2\}$, let $Y^k$ be the unique solution of
\begin{equation}\label{eq:input_law_mkv}
    Y_s^k
    =\xi^k
     +\int_a^s b(r,Y_r^k,m_r^k)A_r^k\,dr
     +\int_a^s \sigma(r,Y_r^k,m_r^k)A_r^k\,dB_r^k,
    \qquad
    m_r^k:=\Lc(Y_r^k,A_r^k).
\end{equation}
Then
\begin{equation}\label{eq:input_law_conclusion}
    \Lc(Y^1,\xi^1,B^1,A^1)
    =\Lc(Y^2,\xi^2,B^2,A^2).
\end{equation}
\end{lemma}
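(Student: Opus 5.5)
The natural route is to build both solutions by the same Picard iteration and show, by induction, that every iterate has the same joint law with the inputs on the two spaces. The identity \eqref{eq:input_law_conclusion} then passes to the limit. Existence and uniqueness for \eqref{eq:input_law_mkv} follow from Assumption~\ref{ass:coefficient_reward} and the standard Picard argument, as already used in Lemma~\ref{lem:constraint_reduction}.

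We split the argument into a frozen-flow step and a fixed-point step. Fix a deterministic flow $m=(m_r)_{r\in[a,T]}$ in $C([a,T],\Pc_2(\Sbf))$, or simply a measurable flow with $\sup_r M_2(m_r)<\infty$. Consider the ordinary SDE
\[
    Z_s^k=\xi^k+\int_a^s b(r,Z_r^k,m_r)A_r^k\,dr+\int_a^s\sigma(r,Z_r^k,m_r)A_r^k\,dB_r^k .
\]
Its Picard iterates are $Z^{k,0}:=\xi^k$ and $Z^{k,j+1}:=\xi^k+\int b(\cdot,Z^{k,j},m)A^k\,dr+\int\sigma(\cdot,Z^{k,j},m)A^k\,dB^k$.

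By induction on $j$, there is a measurable functional $\Phi_j$ on $\R^n\times C([a,T],\R^d)\times\I([a,T])$ such that
\[
    Z^{k,j}=\Phi_j(\xi^k,B^k,A^k)\quad \P^k\text{-a.s. for both }k .
\]
The only nontrivial part of this induction is the stochastic integral. The integrand $H^k_r=\sigma(r,\Phi_j(\xi^k,B^k,A^k)_r,m_r)A^k_r$ is progressively measurable and square integrable. We approximate it by left-point Riemann sums $\sum H^k_{r_\ell}(B^k_{r_{\ell+1}}-B^k_{r_\ell})$, first after time-mollifying $H^k$ so that the sums converge in $L^2$, uniformly in time along a subsequence. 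Each Riemann sum is the same measurable function of $(\xi^k,B^k,A^k)$ for $k=1,2$. Because the input laws agree, the $L^2$ limits can be chosen as the $\limsup$ of the same functional along a common subsequence, and this gives a common measurable version. Hence $\Lc(Z^{k,j},\xi^k,B^k,A^k)$ does not depend on $k$. Since $Z^{k,j}\to Z^k$ in $L^2(\sup)$ on each space, with rates depending only on the common law, the joint laws of $(Z^k,\xi^k,B^k,A^k)$ coincide.

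Adaptedness of $A^k$ and of $\xi^k$ with respect to $\F^k$ is what makes $B^k$ a Brownian motion relative to a filtration to which everything is adapted. This is what guarantees that the integrals are well defined. The identity of laws is what lets us transfer the limits.

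For the McKean--Vlasov equation, we run the fixed-point map $m\mapsto \big(\Lc(Z^{k,m}_r,A^k_r)\big)_r$. Start from $m^{(0)}_r:=\Lc(\xi^k,A^k_r)$, which is the same for both $k$ by (iv). The frozen-flow step shows that $m^{(\ell+1)}$, computed on either space, coincides whenever $m^{(\ell)}$ does. By induction the two iterations produce identical flows. The Lipschitz bound \eqref{eq:coeff_lipschitz} together with the standard Gronwall estimate shows $m^{(\ell)}\to m^k$, the solution's law flow, and $Z^{k,m^{(\ell)}}\to Y^k$ in $L^2$. Therefore $m^1=m^2=:m$, and $Y^k$ solves the frozen equation with this common $m$. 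The frozen-flow step applied once more gives \eqref{eq:input_law_conclusion}.

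The main obstacle is the rigorous transfer of the stochastic integral as a common measurable functional, i.e., the claim that strong solutions are functionals of the inputs whose laws are determined by the input law. If the approximation argument becomes delicate, the alternative is a Yamada--Watanabe type argument. In that version, one couples the two triples on a product space conditionally on $(\xi,B,A)$, in the usual way, checks that $B$ remains Brownian for the enlarged filtration, and invokes pathwise uniqueness.
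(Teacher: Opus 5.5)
Your proof is correct. It shares its first half with the paper's proof but handles the measure flow by a different mechanism. Both arguments rest on the frozen-flow observation: for a deterministic flow $m$, the Picard iterates, and hence the strong solution, are one common nonanticipative measurable functional of $(\xi,B,A)$, so the joint law of solution and input depends only on the input law $R$ and on $m$. Your Riemann-sum argument with a common subsequence actually justifies this transfer of the stochastic integral; the paper only asserts it.

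The routes diverge at the McKean--Vlasov step. The paper realizes a single canonical copy $(\xi,B,A)\sim R$ and solves there the frozen equations with the two solution flows $m^1$ and $m^2$. The law identity then shows that each $Z^k$ is itself a McKean--Vlasov solution on that common space. One Gronwall estimate, using the coupling bound $\Wc_2^2(m^1_r,m^2_r)\le\E|Z^1_r-Z^2_r|^2$, gives $Z^1=Z^2$. You instead stay on the two original spaces and run the flow iteration $m\mapsto(\Lc(Z^m_r,A_r))_r$ in parallel. This works because the map depends only on $R$ and is an iterated contraction (Gronwall with factorial bounds), so both solution flows arise as limits of identical iterates.

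What each buys: your version needs the contraction estimate for the flow map, essentially the standard existence proof, but no auxiliary canonical space beyond the functional representation. The paper's version needs only one uniqueness estimate, at the cost of checking that $B$ is Brownian in the canonical filtration of the copied input.

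Your iteration can also be shortened. Once you know the fixed-point map is the same on both spaces, $m^1$ and $m^2$ are two fixed points of one map that has at most one fixed point, so $m^1=m^2$ follows directly.
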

\begin{remark}
    The conclusion remains valid if the input includes an
additional time-$a$-measurable random variable that does not
enter the equation, provided that $B^k$ is a Brownian motion in the filtration enlarged
by this variable. In that case, the additional variable is included in the
joint-law conclusion \eqref{eq:input_law_conclusion}.
\end{remark}

\begin{proof}
Let $R$ denote the common law of $(\xi^k,B^k,A^k)$ and realize the canonical
input $(\xi,B,A)$ with law $R$. The canonical process $B$ is a Brownian motion in the canonical
filtration generated by $(\xi,B,A)$. For each measurable deterministic
measure flow $m=(m_r)_{r\in[a,T]}$ satisfying
$\int_a^T M_2(m_r)\,dr<\infty$, the
classical SDE
\begin{equation}\label{eq:fixed_flow_sde}
    Z_s
    =\xi
     +\int_a^s b(r,Z_r,m_r)A_r\,dr
     +\int_a^s \sigma(r,Z_r,m_r)A_r\,dB_r
\end{equation}
has a pathwise unique strong solution under Assumption
\ref{ass:coefficient_reward}. The Picard iterates are nonanticipative
measurable functionals of $(\xi,B,A)$ and converge in $L^2$. Consequently,
the joint law of the solution and $(\xi,B,A)$ is determined by $R$ and $m$.

Let $m^k_r:=\Lc(Y_r^k,A_r^k)$ and, on the common input space, let $Z^k$ solve
\eqref{eq:fixed_flow_sde} with the deterministic flow $m^k$. The standard
moment estimate ensures that these flows satisfy the required integrability.
By the preceding observation,
\begin{equation}\label{eq:Z_Y_same_law}
    \Lc(Z^k,\xi,B,A)
    =\Lc(Y^k,\xi^k,B^k,A^k),
    \qquad k=1,2.
\end{equation}
Hence
    $\Lc(Z_r^k,A_r)=m_r^k.$
Using the common-space coupling gives
\begin{equation}\label{eq:input_law_W2}
    \Wc_2^2(m_r^1,m_r^2)
    \le \E|Z_r^1-Z_r^2|^2.
\end{equation}
The Burkholder--Davis--Gundy inequality, the boundedness of $A$, the
Lipschitz condition, and \eqref{eq:input_law_W2} imply, for $u\in[a,T]$,
\[
    \E\left[\sup_{a\le s\le u}|Z_s^1-Z_s^2|^2\right]
    \le
    C\int_a^u
      \E\left[\sup_{a\le v\le r}|Z_v^1-Z_v^2|^2\right]dr.
\]
Gronwall's lemma yields $Z^1=Z^2$ almost surely. Combining this with
\eqref{eq:Z_Y_same_law} proves \eqref{eq:input_law_conclusion}. The same
construction under the common augmented input law proves the stated conclusion
for an additional time-$a$ random variable.
\end{proof}

For simplification, we write
\begin{equation}\label{eq:Bm_definition}
    B^m:=(W_s^m-W_t)_{s\in[t,T]}.
\end{equation}
Thus $B_s^m=0$ on $[t,t_1^m]$ and
$B_s^m=W_s-W_{t_1^m}$ for $s\ge t_1^m$.

\begin{lemma}
\label{lem:equivalence_discrete_stopping}
Let $t<T$, $\P\in\Pc_W(t,\nu)$, and let $I^m$ and $X^m$ be as in Lemma
\ref{lem:approximate_by_discrete_stopping}. There exists an
$\mathbb G^t$-adapted survival process $\widetilde I^m$, piecewise constant on
the grid $\pi_m$, such that
\begin{equation}\label{eq:purification_input_law}
\begin{split}
    \Lc^{\P}\Big(
       X_t,I_{t-},W_{\cdot\wedge t},B^m,
       I^m_{|\{t-\}\cup[t,T]}
    \Big)
    =
    \Lc^{\P}\Big(
       X_t,I_{t-},W_{\cdot\wedge t},B^m,
       \widetilde I^m_{|\{t-\}\cup[t,T]}
    \Big).
\end{split}
\end{equation}
Define $\widetilde X_s^m:=X_t$ for $s\in[t,t_1^m]$ and, for
$s\in[t_1^m,T]$, let
\begin{equation}\label{eq:tilde_X_truncated}
    \widetilde X_s^m
    =X_t
     +\int_{t_1^m}^s
       b(r,\widetilde X_r^m,\widetilde\mu_r^m)
       \widetilde I_r^m\,dr
     +\int_{t_1^m}^s
       \sigma(r,\widetilde X_r^m,\widetilde\mu_r^m)
       \widetilde I_r^m\,dW_r,
       \qquad
       \widetilde\mu_r^m
    :=\Lc^{\P}(\widetilde X_r^m,\widetilde I_r^m).
\end{equation}
Then
\begin{equation}\label{eq:purification_solution_law}
\begin{split}
    \Lc^{\P}\Big(
       \widetilde X^m_{|[t,T]},B^m,
       \widetilde I^m_{|\{t-\}\cup[t,T]}
    \Big)
    =
    \Lc^{\P}\Big(
       X^m_{|[t,T]},B^m,
       I^m_{|\{t-\}\cup[t,T]}
    \Big).
\end{split}
\end{equation}
If $I^m$ satisfies the constraint \eqref{eq:Im_constraint}, then
$\widetilde I^m$ satisfies the constraint \eqref{eq:Im_constraint}.
\end{lemma}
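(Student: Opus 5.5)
The plan is to build $\widetilde I^m$ by sequential randomization: each grid decision will be reproduced with its correct conditional Bernoulli probability, using independent uniform variables extracted from the reserved Brownian increment $W_s-W_t$, $s\in[t,t_1^m]$. Put $\Theta:=(X_t,I_{t-},W_{\cdot\wedge t})$. Under $\P$, $W$ is an $\F$-Brownian motion, so $(W_s-W_t)_{s\in[t,t_1^m]}$ is independent of $\Fc_t\vee\sigma(I^m_{t})$. It is also independent of $B^m$, because $B^m$ involves only increments after $t_1^m$. From the first coordinate of this increment on the disjoint subintervals $[t+(j-1)\delta_m/(m+1),\,t+j\delta_m/(m+1)]$, $j=1,\dots,m+1$, apply the Gaussian distribution function to the normalized increments. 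This gives i.i.d. uniform variables $U_0,\dots,U_m$, measurable with respect to $\Gc^t_{t_1^m}$ and independent of $(\Theta,B^m)$.

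Next I reproduce the discrete stopping decisions. Since $\tau^m$ is an $\F$-stopping time with values in $\{t,t_1^m,\dots,t_m^m,T+1\}$, the path $I^m$ on $\{t-\}\cup[t,T]$ is determined by $I_{t-}$ and the vector $D:=(I^m_{t},I^m_{t_1^m},\dots,I^m_{t_m^m})\in\{0,1\}^{m+1}$. Moreover, $I^m_{t_k^m}$ is $\Fc_{t_k^m}$-measurable. I will define $\widetilde D$ recursively by
\[
\widetilde D_k:=\mathbf 1_{\{U_k\le \varphi_k(\Theta,(B^m_{r})_{r\le t_k^m},\widetilde D_0,\dots,\widetilde D_{k-1})\}},
\]
where $\varphi_k$ is a measurable version of $\P(D_k=1\mid \Theta,(B^m_r)_{r\le t_k^m},D_0,\dots,D_{k-1})$. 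Monotonicity is kept automatically: $\varphi_k=0$ when $D_{k-1}=0$, so we may force $\widetilde D_k=0$ after the first zero. The process $\widetilde I^m$ is then piecewise constant on the grid. Its value on $[t_k^m,t_{k+1}^m)$ is a function of $\Theta$, of $B^m$ up to $t_k^m$ and of $U_0,\dots,U_k$, all of which are $\Gc^t_{t_k^m}$-measurable. Note that $W_{\cdot\wedge t}$ must be measurable with respect to $\Gc^t_s$; it is, since $\Gc^t_s$ contains $W_r$ for $r\le t$. Hence $\widetilde I^m$ is $\mathbb G^t$-adapted. On $[t,t_1^m)$ the value is $I^m_t$, and this step uses $U_0$, which is already available before $t_1^m$. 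To be safe, take $\widetilde I^m$ to equal $I_{t-}$ on $[t,t_1^m)$ when that is the case for $I^m$.

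The main obstacle is the proof of the joint-law identity \eqref{eq:purification_input_law}. The key point is that the conditional law of the future Brownian increments $(B^m_r-B^m_{t_k^m})_{r\ge t_k^m}$, given $\Theta$, the past of $B^m$ and $D_0,\dots,D_k$, is still Wiener measure independent of the conditioning. This is where the $\F$-adaptedness of $D$ and the $\F$-Brownian property of $W$ enter. I will argue by induction on $k$ that
\[
\Lc\big(\Theta,B^m,D_0,\dots,D_k\big)=\Lc\big(\Theta,B^m,\widetilde D_0,\dots,\widetilde D_k\big).
\]
For the induction step, first disintegrate with respect to $(\Theta,(B^m_r)_{r\le t_k^m},D_{<k})$. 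The conditional probability of the event $\{D_k=1\}$ is then matched by the construction, because $U_k$ is independent of everything previously used. Finally, the future increments are independent of the time-$t_k^m$ information on both sides, so the full law of $B^m$ can be appended.

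Once \eqref{eq:purification_input_law} holds, I apply Lemma~\ref{lem:input_law_invariance} on $[a,T]$ with $a=t_1^m$, $\xi=X_t$, Brownian motion $B^m$ (shifted), and survival input $I^m$ or $\widetilde I^m$. The time-$a$ extra variables, together with the values on $[t,t_1^m)$, are handled as in the remark after that lemma. This yields \eqref{eq:purification_solution_law}, where $\widetilde X^m=X^m=X_t$ on $[t,t_1^m]$. Brownianity in the respective filtrations follows from the independence of future increments established above. Finally, the constraint transfers directly: $\E^{\P}[\widetilde I^m_s]=\E^{\P}[I^m_s]$ for every $s$, because the marginal laws coincide.
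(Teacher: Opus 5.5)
Your proposal follows essentially the same route as the paper. Both use conditional Bernoulli kernels on the grid, uniform seeds from normalized increments on the reserved interval $[t,t_1^m]$, induction over grid points (appending the independent future increments of $B^m$), Lemma~\ref{lem:input_law_invariance} applied from $t_1^m$, and equality of marginals for the constraint.

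The one wrinkle is your extra seed $U_0$ for the time-$t$ decision. Saying that $U_0$ is ``available before $t_1^m$'' does not give adaptedness, since on $[t,t+\delta_m/(m+1))$ you need $\Gc^t_s$-measurability. It is nevertheless harmless, and this is what the paper uses directly: $I^m|_{[t,t_1^m)}$ is a Borel function of $I_{t-}$ (equal to $I_{t-}$, or to $0$ in the subthreshold constrained case). Hence $\varphi_0$ is $\{0,1\}$-valued, $\widetilde D_0$ coincides a.s.\ with a $\Gc^t_t$-measurable variable, and no randomization is needed there.
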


\begin{proof}
All the spaces involved are Polish, so regular conditional probabilities
exist.
For $i=1,\ldots,m$, set
\[
    \mathcal H_i^m
    :=\sigma\big(
       X_t,I_{t-},W_{\cdot\wedge t},B^m_{\cdot\wedge t_i^m},I_{t_1^m}^m,\ldots,I_{t_{i-1}^m}^m
    \big).
\]
Since the conditioning variables take values in standard Borel spaces, there
exists a Borel function $q_i^m$ with values in $[0,1]$ such that
\begin{equation}\label{eq:q_kernel}
\begin{split}
    q_i^m\big(
       X_t,I_{t-},W_{\cdot\wedge t},B^m_{\cdot\wedge t_i^m},I_{t_1^m}^m,\ldots,I_{t_{i-1}^m}^m
    \big)
      =\E^{\P}[I_{t_i^m}^m\mid\mathcal H_i^m],
      \qquad \P\text{-a.s.}
\end{split}
\end{equation}
Because $I_{t_i^m}^m$ is $\{0,1\}$-valued, the right-hand side determines its
conditional law. Given $\mathcal H_i^m$, the variable $I_{t_i^m}^m$ is
Bernoulli with the success probability specified in \eqref{eq:q_kernel}.

Split the reserved interval $[t,t_1^m]$ into $m$ equal pieces by setting
    $u_j^m:=t+\frac{j}{m}\delta_m,
    \qquad j=0,\ldots,m$.
Let $\Phi$ be the standard normal cumulative distribution function, and use
the first coordinate of $W$ to define
\begin{equation}\label{eq:uniform_seeds}
    U_j^m
    :=\Phi\left(
       \frac{
          W_{u_j^m}^{(1)}-W_{u_{j-1}^m}^{(1)}
       }{
          \sqrt{u_j^m-u_{j-1}^m}
       }
    \right),
    \qquad j=1,\ldots,m.
\end{equation}
Because $W$ is Brownian in the weak filtration, the variables
$U_1^m,\ldots,U_m^m$ are independent and uniformly distributed on $[0,1]$.
The vector $(U_1^m,\ldots,U_m^m)$ is also independent of the tuple
$(X_t,I_{t-},W_{\cdot\wedge t},B^m)$.

Define recursively
\begin{equation}\label{eq:tilde_A_recursion}
    \widetilde I_i
    :=\mathbf 1_{\left\{
       U_i^m\le
       q_i^m\big(
          X_t,I_{t-},W_{\cdot\wedge t},
          B^m_{\cdot\wedge t_i^m},
          \widetilde I_1,\ldots,\widetilde I_{i-1}
       \big)
    \right\}},
    \qquad i=1,\ldots,m.
\end{equation}
The resulting variables satisfy
\begin{equation}\label{eq:discrete_vector_law}
    \Lc^{\P}(X_t,I_{t-},W_{\cdot\wedge t},B^m,I_{t_1^m}^m,\ldots,I_{t_m^m}^m)
    =
    \Lc^{\P}(X_t,I_{t-},W_{\cdot\wedge t},B^m,\widetilde I_1,\ldots,\widetilde I_m).
\end{equation}
To see this, proceed by induction over the grid. At time
$t_1^m$, the conditional law of $I_{t_1^m}^m$ given
$(X_t,I_{t-},W_{\cdot\wedge t},B^m_{\cdot\wedge t_1^m})$ is Bernoulli with
parameter $q_1^m$, and independence of $U_1^m$ gives the same law for
$\widetilde I_1$. Suppose the joint laws agree through time
$t_i^m$. Since $I_{t_i^m}^m$ is
$\Fc_{t_i^m}$-measurable and $W$ is a $\F$-Brownian motion, the increment
\[
    (W_s-W_{t_i^m})_{s\in[t_i^m,t_{i+1}^m]}
\]
is independent of $\Fc_{t_i^m}$. Hence, conditionally on the matched variables
through $t_i^m$, the next segment of $B^m$ has the same Wiener transition law
in the original and constructed systems. At $t_{i+1}^m$, the conditional law of
$I_{t_{i+1}^m}^m$ given the enlarged past is Bernoulli with parameter $q_{i+1}^m$,
and \eqref{eq:tilde_A_recursion} produces exactly this conditional law using
the independent seed $U_{i+1}^m$. The induction gives
\eqref{eq:discrete_vector_law}.

By construction in Lemma \ref{lem:approximate_by_discrete_stopping}, the
segment $I^m_{|[t,t_1^m)}$ is a Borel function of $I_{t-}$. Use the same
function for $\widetilde I^m$ on $[t,t_1^m)$, set
$\widetilde I_{t-}^m:=I_{t-}$, and define
\[
    \widetilde I_s^m:=\widetilde I_i,
    \qquad
    s\in[t_i^m,t_{i+1}^m),
    \quad i=1,\ldots,m,
\]
with the convention $t_{m+1}^m:=T+1$. The process $\widetilde I^m$ is then a
survival process adapted to $\mathbb G^t$, and
\eqref{eq:discrete_vector_law} therefore yields
\eqref{eq:purification_input_law}.

On $[t_1^m,T]$, the process
    $(W_s-W_{t_1^m})_{s\in[t_1^m,T]}$
is the nonconstant part of $B^m$. Hence
\eqref{eq:purification_input_law} gives equality of the joint input laws
needed in Lemma \ref{lem:input_law_invariance}. Applying that lemma from time
$t_1^m$, with initial state $X_t$, yields
\eqref{eq:purification_solution_law}. Finally,
\eqref{eq:purification_input_law} gives
\[
    \Lc^{\P}(\widetilde I_s^m)=\Lc^{\P}(I_s^m)
    \qquad\text{for every }s,
\]
so the deterministic constraint \eqref{eq:Im_constraint} is preserved.
\end{proof}

\begin{lemma}
\label{lem:restore_dynamics}
Let $\widetilde I^m$ be given by Lemma
\ref{lem:equivalence_discrete_stopping}. Set
$\widehat X_s^m:=X_s$ for $s<t$, and let $\widehat X^m$ solve on $[t,T]$:
\begin{equation}\label{eq:restored_dynamics}
    \widehat X_s^m
    =X_t
     +\int_t^s
       b(r,\widehat X_r^m,\widehat\mu_r^m)
       \widetilde I_r^m\,dr
     +\int_t^s
       \sigma(r,\widehat X_r^m,\widehat\mu_r^m)
       \widetilde I_r^m\,dW_r,
\end{equation}
where
\[
    \widehat\mu_r^m
    :=\Lc^{\P}(\widehat X_r^m,\widetilde I_r^m).
\]
Extend $\widetilde I^m$ to times before $t$ by using the original path
$I$, and set
\begin{equation}\label{eq:Pm_hat}
    \widehat\P_m
    :=\Lc^{\P}(\widehat X^m,W,\widetilde I^m).
\end{equation}
Then
    $\widehat\P_m\in\Pc_S(t,\nu).$
    
If $\P\in\widetilde{\Pc}_W(t,\nu)$ and $I^m$ was chosen as in the constrained
part of Lemma \ref{lem:approximate_by_discrete_stopping}, then
    $\widehat\P_m\in\widetilde{\Pc}_S(t,\nu)$.
Moreover,
\begin{equation}\label{eq:restore_state_error}
    \E^{\P}\left[
       \sup_{s\in[t,T]}
       |\widehat X_s^m-\widetilde X_s^m|^2
    \right]
    \le C\delta_m,
\end{equation}
and
\begin{equation}\label{eq:strong_projected_law_convergence}
    \Wc_{2,t}(\widehat\P_m^t,\P^t)\longrightarrow0.
\end{equation}
\end{lemma}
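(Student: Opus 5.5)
We verify membership in $\Pc_S(t,\nu)$ first, then the constraint, then compare $\widehat X^m$ with $\widetilde X^m$, and finally combine the two with Lemmas \ref{lem:approximate_by_discrete_stopping} and \ref{lem:equivalence_discrete_stopping} by the triangle inequality.

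\textbf{Admissibility.} On the original space, $W$ is an $\F$-Brownian motion. $\widetilde I^m$ is built from $(X_t,I_{t-},W)$ and is $\mathbb G^t$-adapted, because the seeds $U_j^m$ are Brownian increments on $[t,t_1^m]$ and $\widetilde I_i$ is determined by time $t_i^m\ge t_1^m$. Equation \eqref{eq:restored_dynamics} is a McKean--Vlasov SDE with bounded adapted multiplier. The Picard argument used in Lemma \ref{lem:constraint_reduction} therefore gives a unique solution $\widehat X^m$. This solution is adapted to the filtration generated by $(X_t,I_{t-},W)$, hence $\mathbb G^t$-adapted.

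Now pass to the law $\widehat\P_m$ on $\Om$. The natural filtration of $(\widehat X^m,W,\widetilde I^m)$ is contained in the filtration generated by $(X_{\cdot\wedge t},I_{\cdot\wedge t},W)$ augmented by nothing after $t$ beyond $W$. So $W$ stays Brownian, and (i) of Definition \ref{def:stopping_rule} holds with $\Lc(X_t,I_{t-})=\nu$. The SDE identity (ii) transfers to the canonical space because the stochastic integral is a measurable functional of the joint law. Measurability of $I_s$ with respect to the completed $\Gc^t_s$ under $\widehat\P_m$ follows because $\widetilde I^m_s$ is a Borel function of $(X_t,I_{t-},W_{\cdot\wedge s})$. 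Hence $\widehat\P_m\in\Pc_S(t,\nu)$.

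\textbf{Constraint.} If $I^m$ satisfies \eqref{eq:Im_constraint}, the last assertion of Lemma \ref{lem:equivalence_discrete_stopping} shows that $\widetilde I^m$ satisfies it too. Since $\E^{\widehat\P_m}[I_s]=\E^\P[\widetilde I^m_s]$, we get $\widehat\P_m\in\widetilde\Pc_S(t,\nu)$.

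\textbf{Estimate \eqref{eq:restore_state_error}.} This is the main computation. On $[t,t_1^m]$ we have $\widetilde X^m=X_t$, so
$$\widehat X^m_s-X_t=\int_t^s b\,\widetilde I^m\,dr+\int_t^s\sigma\,\widetilde I^m\,dW.$$
By the growth bound, BDG, and the moment bound for $\widehat X^m$ (proved as in Lemma \ref{lem:moment_estimate}), $\E\sup_{[t,t_1^m]}|\widehat X^m-X_t|^2\le C\delta_m$.

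For $s\ge t_1^m$, subtract \eqref{eq:tilde_X_truncated} from \eqref{eq:restored_dynamics}. Both equations use the same multiplier $\widetilde I^m$ and the same $dW$ on $[t_1^m,s]$. The difference therefore equals $\widehat X^m_{t_1^m}-X_t$ plus integrals of coefficient differences. Bound $\Wc_2^2(\widehat\mu^m_r,\widetilde\mu^m_r)\le\E|\widehat X^m_r-\widetilde X^m_r|^2$ by the same-space coupling. The Lipschitz condition, BDG, and Gronwall then give $\E\sup_{[t,T]}|\widehat X^m-\widetilde X^m|^2\le C\delta_m$. The constant depends only on $T$, $L$, $C$, and $\int|x|^2\,d\nu$, uniformly in $m$.

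\textbf{Convergence \eqref{eq:strong_projected_law_convergence}.} We have $\widehat\P_m^t=\Lc^\P(\widehat X^m_{|[t,T]},(W_s-W_t)_{s},\widetilde I^m)$. Compare it with $\Lc^\P(\widetilde X^m,B^m,\widetilde I^m)$ through the same-space coupling. The state part costs $C\delta_m$ by the previous step, the Brownian part costs $\E\sup|W-W_t-B^m|^2\le C\delta_m$ as in \eqref{eq:Wm_error}, and the survival part costs zero. By \eqref{eq:purification_solution_law}, $\Lc^\P(\widetilde X^m,B^m,\widetilde I^m)$ equals $\Lc^\P(X^m,B^m,I^m)$. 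By \eqref{eq:truncated_projected_law_convergence}, this last law converges to $\P^t$, and the triangle inequality for $\Wc_{2,t}$ concludes.

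The only delicate point is the admissibility check: verifying that the filtration under $\widehat\P_m$ keeps $W$ Brownian while $I$ is strongly measurable. This is settled by the adaptedness of $\widetilde I^m$ noted above.
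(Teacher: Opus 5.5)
Your proposal is correct and matches the paper's proof step for step. It gets admissibility from the natural filtration of $(\widehat X^m,W,\widetilde I^m)$ being a subfiltration of the completed $\F$, and the constraint from equality of the survival marginals in Lemma \ref{lem:equivalence_discrete_stopping}. It then obtains the $C\delta_m$ bound from the growth estimate on $[t,t_1^m]$ followed by Lipschitz/BDG/Gronwall, and the final convergence from the same-space coupling with $(\widetilde X^m,B^m,\widetilde I^m)$, the law identity \eqref{eq:purification_solution_law}, and the triangle inequality.

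One phrase is loose: the stochastic integral is not ``a measurable functional of the joint law.'' What transfers to the canonical space is that the SDE identity is a property of the joint law of $(\widehat X^m,W,\widetilde I^m)$. The paper does not dwell on this point either.
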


\begin{proof}
The process $\widetilde I^m$ is $\mathbb G^t$-adapted. 
Let $\widehat\F^m$ be the natural filtration of
$(\widehat X^m,W,\widetilde I^m)$, including the copied paths
before $t$. Each of these coordinates is adapted to the original weak
filtration $\F$, so
\[
    \widehat\Fc_s^m\subseteq\Fc_s,
    \qquad s\in[0,T].
\]
Since $W$ is a $\F$-Brownian motion, it remains a $\widehat\F^m$-Brownian. 
The initial law is unchanged, and
$\widetilde I_s^m$ is measurable with respect to the completion of
$\Gc_s^t$ for each $s\ge t$. Hence the pushforward law
$\widehat\P_m$ is a strong stopping rule. If $\P$ is constrained,
Lemma \ref{lem:equivalence_discrete_stopping} gives equality of the marginal
survival laws of $I^m$ and $\widetilde I^m$, so the constraint is preserved.

On $[t,t_1^m]$, $\widetilde X_s^m=X_t$. The growth condition and the standard
moment estimate for \eqref{eq:restored_dynamics} give
\begin{equation}\label{eq:restore_initial_error}
    \E^{\P}\left[
       \sup_{s\in[t,t_1^m]}
       |\widehat X_s^m-X_t|^2
    \right]
    \le C\delta_m.
\end{equation}
For $s\ge t_1^m$, the equations for $\widehat X^m$ and
$\widetilde X^m$ are driven by the same Brownian increments and the same
survival process. The coupling
   $ \big((\widehat X_r^m,\widetilde I_r^m),
         (\widetilde X_r^m,\widetilde I_r^m)\big)$
gives
\[
    \Wc_2^2(\widehat\mu_r^m,\widetilde\mu_r^m)
    \le
    \E^{\P}|\widehat X_r^m-\widetilde X_r^m|^2.
\]
Thus the Burkholder--Davis--Gundy inequality and the Lipschitz condition imply,
for $u\in[t_1^m,T]$,
\[
\begin{split}
    \E^{\P}\left[
       \sup_{s\in[t,u]}
       |\widehat X_s^m-\widetilde X_s^m|^2
    \right]
    \le\; C\delta_m
    +C\int_{t_1^m}^u
       \E^{\P}\left[
          \sup_{v\in[t,r]}
          |\widehat X_v^m-\widetilde X_v^m|^2
       \right]dr.
\end{split}
\]
Gronwall's lemma proves \eqref{eq:restore_state_error}.

On the same probability space, couple
\[
    \big(
       \widehat X^m_{|[t,T]},
       (W_s-W_t)_{s\in[t,T]},
       \widetilde I^m
    \big)
\]
with
\[
    \big(
       \widetilde X^m_{|[t,T]},
       B^m,
       \widetilde I^m
    \big).
\]
By \eqref{eq:restore_state_error} and \eqref{eq:Wm_error}, their
$\Wc_{2,t}$-distance tends to zero. By
\eqref{eq:purification_solution_law}, the second triple has exactly the same
law as
\[
    \big(X^m_{|[t,T]},B^m,I^m\big),
\]
and this law converges to $\P^t$ by
\eqref{eq:truncated_projected_law_convergence}. This proves
\eqref{eq:strong_projected_law_convergence}.
\end{proof}

To prove the convergence of the value function, we need the following lower semicontinuity property of
the running-reward functional on laws over $\Om_t$.

\begin{lemma}[Lower semicontinuity of the running reward]
\label{lem:reward_lsc}
For $Q\in\Pc_2(\Om_t)$, define
\begin{equation}\label{eq:F_functional}
    \mathcal F_t(Q)
    :=\int_t^T\int_{\Om_t}
       f\big(s,x_s,\Lc^Q(x_s,i_s)\big)i_s
       \,Q(dx,dw,di)\,ds.
\end{equation}
Under Assumption \ref{ass:coefficient_reward}, the functional
$\mathcal F_t$ is lower semicontinuous on
$(\Pc_2(\Om_t),\Wc_{2,t})$.
\end{lemma}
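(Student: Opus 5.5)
Take a sequence $Q_k\to Q$ in $(\Pc_2(\Om_t),\Wc_{2,t})$; the goal is $\liminf_k\mathcal F_t(Q_k)\ge\mathcal F_t(Q)$. By Skorokhod's representation (all spaces are Polish), realize random elements $(x^k,w^k,i^k)\sim Q_k$ and $(x,w,i)\sim Q$ on one probability space with $d_t((x^k,w^k,i^k),(x,w,i))\to0$ almost surely. Wasserstein convergence gives uniform integrability of $\|x^k\|_\infty^2$, so the convergence also holds in $L^2$ after passing to a subsequence that realizes the $\liminf$.

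\textbf{Step 1: pointwise convergence of the marginals.} Almost surely, $\|x^k-x\|_\infty\to0$ and $\vartheta_t(i^k)\to\vartheta_t(i)$, $i^k_{t-}\to i_{t-}$. Hence $i^k_s\to i_s$ for every $s\neq\vartheta_t(i)$, that is, for all $s$ outside a single random time. Let $D:=\{s: Q(\vartheta_t(i)=s)>0\}$, which is countable. For $s\notin D$, dominated convergence gives $\E|i^k_s-i_s|^2\to0$, and together with $L^2$ convergence of $x^k_s$ this yields
\[
\Wc_2\big(\Lc^{Q_k}(x_s,i_s),\Lc^Q(x_s,i_s)\big)\to0 \quad\text{for Lebesgue-a.e. } s.
\]

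\textbf{Step 2: Fatou with a uniformly integrable lower bound.} For a.e.\ $s$ and a.s.\ $\omega$, set $a_k:=f(s,x^k_s,m^k_s)i^k_s$, where $m^k_s$ denotes the law. When $i_s=1$, eventually $i^k_s=1$, and lower semicontinuity of $f(s,\cdot,\cdot)$ gives $\liminf_k a_k\ge f(s,x_s,m_s)=f(s,x_s,m_s)i_s$. When $i_s=0$ (and $s\ne\vartheta_t(i)$), eventually $i^k_s=0$, so $a_k=0$ for large $k$. Thus $\liminf_k a_k\ge f(s,x_s,m_s)i_s$ for $ds\otimes d\P$-a.e.\ $(s,\omega)$. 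For the lower bound, \eqref{eq:f_growth} gives $a_k\ge -C(1+\|x^k\|_\infty^2+\sup_r M_2(m^k_r))$, and $\sup_{k,r}M_2(m^k_r)$ is bounded. Since $\|x^k\|_\infty^2\to\|x\|_\infty^2$ in $L^1$, the negative parts form a family $g_k\to g$ in $L^1(ds\otimes d\P)$.

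\textbf{Step 3: conclusion.} Apply the generalized Fatou lemma, i.e.\ Fatou applied to $a_k+g_k\ge0$ with $\int g_k\to\int g$. This gives $\liminf_k\mathcal F_t(Q_k)\ge\mathcal F_t(Q)$.

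\textbf{Main obstacle.} The delicate step is Step 1. The indicator $i_s$ is discontinuous in the metric $d_{\I,t}$ at the stopping time, and the measure argument of $f$ depends on the law at a fixed time $s$. Both problems are handled by restricting to $s$ outside the countable atom set $D$ of $\vartheta_t(i)$ under $Q$, which has Lebesgue measure zero.
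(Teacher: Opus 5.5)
Your proof is correct and follows essentially the same route as the paper's: a common-space coupling with almost sure path convergence, a pointwise $\liminf$ from lower semicontinuity of $f$ off a $ds\otimes d\P$-null set, and Fatou's lemma after shifting by a quadratic lower bound whose integrals converge. The differences are cosmetic (Skorokhod's theorem instead of glued optimal couplings, and the countable atom set $D$ instead of the paper's subsequence extraction from $\int_t^T\Wc_2^2(\mu^n_s,\mu_s)\,ds\to0$), with one small repair needed: the constant $\sup_r M_2(m^k_r)$ need not converge to its $Q$-counterpart (an atom of $\vartheta_t(i)$ at $t$ breaks this), so either bound $M_2(m^k_s)$ by $1+\E\|x^k\|_\infty^2$, which does converge, or pass to a further subsequence along which these bounded constants converge.
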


\begin{proof}
Let $Q_n\to Q$ in $\Wc_{2,t}$, and choose a subsequence such that
\[
    \lim_{n\to\infty}\mathcal F_t(Q_n)
    =\liminf_{k\to\infty}\mathcal F_t(Q_k).
\]
For each $n$, choose a coupling of $Q_n$ and $Q$ whose expected squared
$d_t$-distance tends to zero. Then there exist these couplings $(X^n,I^n)$, $(X,I)$ on one
probability space with a common $Q$-distributed variable, so that
\begin{equation}\label{eq:common_coupling_convergence}
    \E\left[
       \|X^n-X\|_{\infty,[t,T]}^2
       +d_{\I,t}(I^n,I)^2
    \right]\longrightarrow0.
\end{equation}
Passing to a further subsequence, we may assume that the expectations in
\eqref{eq:common_coupling_convergence} are summable. Hence
\begin{equation}\label{eq:almost_sure_path_convergence}
    \|X^n-X\|_{\infty,[t,T]}
    +d_{\I,t}(I^n,I)
    \longrightarrow0
    \qquad\text{a.s.}
\end{equation}
By \eqref{eq:I_L1_bound},
    $I_s^n\longrightarrow I_s$
for $ds\otimes d\P$-a.e. $(s,\omega)$.
Set
    $\mu_s^n:=\Lc^{Q_n}(x_s,i_s),
    \mu_s:=\Lc^Q(x_s,i_s)$.
The common-space coupling gives
\begin{equation}
\begin{split}
    \int_t^T\Wc_2^2(\mu_s^n,\mu_s)\,ds
    \le
    \E\int_t^T
       \big(|X_s^n-X_s|^2+|I_s^n-I_s|^2\big)\,ds
       \longrightarrow0.
\end{split}
\end{equation}
Along another subsequence,
   $ \Wc_2(\mu_s^n,\mu_s)\longrightarrow0$
for Lebesgue-a.e. $s$. Combining this with
\eqref{eq:almost_sure_path_convergence}, the lower semicontinuity of $f$, and
the fact that $I_s^n,I_s\in\{0,1\}$, we obtain
\begin{equation}\label{eq:pointwise_reward_liminf}
    \liminf_{n\to\infty}
       f(s,X_s^n,\mu_s^n)I_s^n
    \ge
       f(s,X_s,\mu_s)I_s
\end{equation}
for $ds\otimes d\P$-a.e. $(s,\omega)$.

Let $h(s,x,m):=1+|x|^2+M_2(m)$. Then \eqref{eq:f_growth} gives
    $f(s,x,m)i+C h(s,x,m)\ge0$
for $i\in\{0,1\}$.
Moreover, $\Wc_2(\mu_s^n,\mu_s)\to0$ implies
$M_2(\mu_s^n)\to M_2(\mu_s)$ for a.e. $s$. Fatou's lemma applied to the
shifted nonnegative integrands therefore gives
\begin{equation}\label{eq:fatou_shifted}
    \mathcal F_t(Q)+C\mathcal H_t(Q)
    \le
    \liminf_{n\to\infty}
       \big(\mathcal F_t(Q_n)+C\mathcal H_t(Q_n)\big),
\end{equation}
where
\[
    \mathcal H_t(Q)
    :=\int_t^T\int_{\Om_t}
       h\big(s,x_s,\Lc^Q(x_s,i_s)\big)
       \,Q(dx,dw,di)\,ds.
\]
Since
\[
    M_2(\mu_s)
    =\E^Q\big[|X_s|^2+I_s\big],
\]
we have
\begin{equation}\label{eq:H_explicit}
    \mathcal H_t(Q)
    =(T-t)
     +2\int_t^T\E^Q|X_s|^2\,ds
     +\int_t^T\E^Q[I_s] \,ds.
\end{equation}
The coupling \eqref{eq:common_coupling_convergence} and Cauchy--Schwarz imply
\[
    \int_t^T\E|X_s^n|^2\,ds
    \longrightarrow
    \int_t^T\E|X_s|^2\,ds.
\]
Indeed,
\[
\begin{split}
    \left|
      \E\int_t^T|X_s^n|^2\,ds
      -\E\int_t^T|X_s|^2\,ds
    \right|
    \le
    T\big(\E\|X^n-X\|_\infty^2\big)^{1/2}
     \big(\E(\|X^n\|_\infty+\|X\|_\infty)^2\big)^{1/2}
    \longrightarrow0,
\end{split}
\]
where the second factor is bounded because $Q_n\to Q$ in $\Wc_{2,t}$. Also,
by \eqref{eq:I_L1_bound},
\[
    \E\int_t^T|I_s^n-I_s|\,ds
    \le
    \E|\vartheta_t(I^n)-\vartheta_t(I)|
    \longrightarrow0.
\]
It follows that $\mathcal H_t(Q_n)\to\mathcal H_t(Q)$. Subtracting the
convergent $\mathcal H_t$ terms from \eqref{eq:fatou_shifted} yields
\[
    \mathcal F_t(Q)
    \le
    \liminf_{n\to\infty}\mathcal F_t(Q_n),
\]
which is the desired lower semicontinuity.
\end{proof}

We can now combine the approximation with lower semicontinuity to obtain
strong--weak equivalence.

\begin{theorem}\label{thm:equivalence_value_function}
Suppose Assumption \ref{ass:coefficient_reward} holds and let
$\nu\in\Pc_2(\Sbf)$.

\begin{enumerate}
\item[(i)] If $t<T$, then for every $\P\in\Pc_W(t,\nu)$ there exists a
sequence $\P_m\in\Pc_S(t,\nu)$ such that
\begin{equation}\label{eq:density_projected_laws}
    \Wc_{2,t}(\P_m^t,\P^t)\longrightarrow0.
\end{equation}
Equivalently, the projected laws
    $\{\P^t:\P\in\Pc_S(t,\nu)\}$ induced by strong stopping rules
are dense in the projected laws
   $ \{\P^t:\P\in\Pc_W(t,\nu)\}$ induced by weak stopping rules
under $\Wc_{2,t}$.

\item[(ii)] If $t<T$ and
$\P\in\widetilde{\Pc}_W(t,\nu)$, the sequence in part \textnormal{(i)} can be
chosen in $\widetilde{\Pc}_S(t,\nu)$.

\item[(iii)] For every $t\in[0,T]$,
    $V_S(t,\nu)=V_W(t,\nu)$.
\end{enumerate}
\end{theorem}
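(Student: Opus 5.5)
Parts (i) and (ii) follow by assembling the approximation lemmas. Part (iii) then combines (ii) with the constraint reduction of Lemma \ref{lem:constraint_reduction} and the lower semicontinuity of Lemma \ref{lem:reward_lsc}.

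\emph{Parts (i) and (ii).} Fix $t<T$ and $\P\in\Pc_W(t,\nu)$.
\begin{enumerate}
\item[1.] For each partition $\pi_m$, apply Lemma \ref{lem:approximate_by_discrete_stopping} to obtain the grid-valued survival process $I^m$ and the delayed state $X^m$. If $\P\in\widetilde\Pc_W(t,\nu)$, use its constrained version, so that \eqref{eq:Im_constraint} holds.
\item[2.] Apply Lemma \ref{lem:equivalence_discrete_stopping}. This purifies $I^m$ into a $\mathbb G^t$-adapted process $\widetilde I^m$ with the same joint input law, hence the same marginal survival laws, so the constraint is preserved.
\item[3.] Apply Lemma \ref{lem:restore_dynamics}. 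This restores the dynamics on the full interval $[t,T]$ and produces $\P_m:=\widehat\P_m\in\Pc_S(t,\nu)$, which lies in $\widetilde\Pc_S(t,\nu)$ in the constrained case.
\item[4.] The convergence \eqref{eq:strong_projected_law_convergence} is exactly \eqref{eq:density_projected_laws}.
\end{enumerate}
The density restatement in (i) is just a rephrasing, since $\Pc_S(t,\nu)\subseteq\Pc_W(t,\nu)$.

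\emph{Part (iii).} If $t=T$, every integral in \eqref{eq:objective} is over a null interval, so $V_S(T,\nu)=V_W(T,\nu)=0$. Let $t<T$. The inequality $V_S\le V_W$ is immediate from the inclusion of the classes. For the reverse inequality, Lemma \ref{lem:constraint_reduction} with $A=W$ lets us restrict the supremum to $\P\in\widetilde\Pc_W(t,\nu)$. For such $\P$, identity \eqref{eq:objective_on_constrained_set} gives
\[
    J_t(\P)=\mathcal F_t(\P^t).
\]
This holds because the integrand involves only $X_s$, $I_s$ and $\mu_s=\Lc^{\P}(X_s,I_s)$ for $s\ge t$, and these coordinates are all read off from $\P^t$.

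Take the constrained strong approximants $\P_m\in\widetilde\Pc_S(t,\nu)$ from (ii). Applying \eqref{eq:objective_on_constrained_set} to them as well gives $J_t(\P_m)=\mathcal F_t(\P_m^t)$. Lemma \ref{lem:reward_lsc} then yields
\[
    J_t(\P)=\mathcal F_t(\P^t)\le\liminf_{m\to\infty}\mathcal F_t(\P_m^t)=\liminf_{m\to\infty}J_t(\P_m)\le V_S(t,\nu).
\]
Taking the supremum over $\P\in\widetilde\Pc_W(t,\nu)$ gives $V_W\le V_S$.

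\emph{Main obstacle.} The delicate point is the discontinuous factor $\mathbf 1_{[\alpha,1]}(\E^{\P}[I_s])$ in $J_t$, which is not lower semicontinuous in the law. An unconstrained approximating sequence could have survival masses dipping slightly below $\alpha$, and then its reward would collapse.

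The plan avoids this by approximating only within the constrained classes. There the indicator disappears by \eqref{eq:objective_on_constrained_set}, and the objective becomes the lower semicontinuous functional $\mathcal F_t$. This is why part (ii), and hence the constraint-preserving discretization in Lemma \ref{lem:approximate_by_discrete_stopping} together with the law preservation in Lemma \ref{lem:equivalence_discrete_stopping}, is indispensable rather than a side remark.

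One also checks that $\mathcal F_t(\P^t)$ is finite, which follows from the moment estimate in Lemma \ref{lem:moment_estimate}. The direction of semicontinuity is the correct one: we only need the approximants to asymptotically do at least as well as $\P$.
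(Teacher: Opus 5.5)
Your proposal is correct and follows the paper's proof essentially step for step. Parts (i)--(ii) come from chaining Lemmas \ref{lem:approximate_by_discrete_stopping}, \ref{lem:equivalence_discrete_stopping}, and \ref{lem:restore_dynamics}, and part (iii) from the constraint reduction, the identity $J_t=\mathcal F_t$ on constrained rules, and lower semicontinuity of $\mathcal F_t$. The one small point the paper adds is at $t=T$: it exhibits an explicit strong rule (constant state and survival, independent Brownian motion) to confirm $\Pc_S(T,\nu)\neq\varnothing$, so both suprema really equal $0$ rather than being taken over an empty class.
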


\begin{proof}
Suppose first that $t<T$. Parts \textnormal{(i)} and \textnormal{(ii)} follow
by combining Lemma \ref{lem:approximate_by_discrete_stopping}, Lemma
\ref{lem:equivalence_discrete_stopping}, and Lemma
\ref{lem:restore_dynamics}. For a given weak rule $\P$, the
law $\widehat\P_m$ defined in \eqref{eq:Pm_hat} belongs to
$\Pc_S(t,\nu)$ and satisfies \eqref{eq:strong_projected_law_convergence}; in the
constrained case, it belongs to $\widetilde{\Pc}_S(t,\nu)$.

For part \textnormal{(iii)}, the inclusion
    $\Pc_S(t,\nu)\subseteq\Pc_W(t,\nu)$
immediately gives
\begin{equation}\label{eq:easy_value_inequality}
    V_S(t,\nu)\le V_W(t,\nu).
\end{equation}
For the reverse inequality, Lemma \ref{lem:constraint_reduction} gives
\begin{equation}\label{eq:values_reduced}
\begin{split}
    V_W(t,\nu)
    =\sup_{\P\in\widetilde{\Pc}_W(t,\nu)}J_t(\P),
    \qquad
    V_S(t,\nu)
    =\sup_{\P\in\widetilde{\Pc}_S(t,\nu)}J_t(\P).
\end{split}
\end{equation}
Fix $\P\in\widetilde{\Pc}_W(t,\nu)$ and let
$\P_m\in\widetilde{\Pc}_S(t,\nu)$ be the approximating sequence from part
\textnormal{(ii)}. By \eqref{eq:objective_on_constrained_set},
\begin{equation}\label{eq:J_as_F}
    J_t(\P)=\mathcal F_t(\P^t),
    \qquad
    J_t(\P_m)=\mathcal F_t(\P_m^t).
\end{equation}
Since $\P_m^t\to\P^t$ in $\Wc_{2,t}$, Lemma \ref{lem:reward_lsc} yields
\[
    J_t(\P)
    \le
    \liminf_{m\to\infty}J_t(\P_m)
    \le
    V_S(t,\nu).
\]
Taking the supremum over
$\P\in\widetilde{\Pc}_W(t,\nu)$ and using
\eqref{eq:values_reduced} gives
\[
    V_W(t,\nu)\le V_S(t,\nu).
\]
Together with \eqref{eq:easy_value_inequality}, this proves
part \textnormal{(iii)} for $t<T$.

At $t=T$, both admissible classes are nonempty. On a product probability
space, let $(\xi,j)\sim\nu$ and let $W$ be a Brownian motion
independent of $(\xi,j)$. Set $X_s:=\xi$ for all $s\in[0,T]$, and set
$I_{0-}:=j$ and $I_s:=j$ for all $s\in[0,T]$. The induced canonical law is a
strong stopping rule at time $T$. Since the integral defining $J_T$ is zero
for every admissible rule,
\[
    V_S(T,\nu)=V_W(T,\nu)=0.
\]
Thus part \textnormal{(iii)} also holds at the terminal time.
\end{proof}

\section{Existence and the Dynamic Programming Principle}

For the existence results in this section, we strengthen Assumption
\ref{ass:coefficient_reward}\textnormal{(ii)} by requiring that, for every
$r\in[0,T]$, the map
   $(x,m)\longmapsto f(r,x,m)$
is continuous on $\R^n\times\Pc_2(\Sbf)$.
This additional continuity is not needed for the weak dynamic programming
principle.

For $Q\in\Pc_2(\Om_t)$, define
\[
    p_r^Q:=\int_{\Om_t}i_r\,Q(dx,dw,di),
    \qquad
    H_t(Q):=\int_t^T(\alpha-p_r^Q)^+p_r^Q\,dr,
    \qquad
    r \in [t,T],
\]
and recall the functional $\mathcal F_t$ from
\eqref{eq:F_functional}.  Define the set of projected weak-rule laws by
\begin{equation}\label{eq:projected_admissible_set}
    \mathfrak K(t,\nu)
    :=\{\P^t:\P\in\Pc_W(t,\nu)\}
    \subset\Pc_2(\Om_t).
\end{equation}

\begin{lemma}[Compactness and continuity]
\label{lem:projected_law_compactness}
Under Assumption \ref{ass:coefficient_reward}, the set
$\mathfrak K(t,\nu)$ is compact in
$(\Pc_2(\Om_t),\Wc_{2,t})$.  The map $H_t$ is continuous on
$\Pc_2(\Om_t)$.  Under the additional continuity assumption on $f$ made
above, $\mathcal F_t$ is continuous on $\mathfrak K(t,\nu)$.
\end{lemma}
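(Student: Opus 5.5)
The proof has three parts: compactness of $\mathfrak K(t,\nu)$, continuity of $H_t$, and continuity of $\mathcal F_t$ on $\mathfrak K(t,\nu)$.

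\textbf{Compactness: relative compactness.} I would first show relative compactness in $\Wc_{2,t}$ and then closedness.

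For tightness, treat the three coordinates of $\Om_t$ separately.
\begin{itemize}
\item The survival coordinate lives in the compact space $\I([t,T])$.
\item The Brownian coordinate always has Wiener law.
\item For the state coordinate, use Lemma \ref{lem:moment_estimate} together with the growth bound \eqref{eq:coeff_growth} and BDG. These give a uniform Kolmogorov-type estimate $\E^\P|X_s-X_r|^4\le C|s-r|^2$. To get it, I would first localize on $|X_t|\le R$, since $\nu$ has only second moments; the tail is controlled by $\nu(|x|>R)$. This yields tightness in $C([t,T],\R^n)$.
\end{itemize}
To upgrade to $\Wc_2$-relative compactness, I need uniform integrability of $\sup_s|X_s|^2$. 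I would bound $\sup_s|X_s-X_t|^2$ by a Gronwall/BDG estimate whose right-hand side involves $|X_t|^2$ and the fixed quantity $M_2(\nu)$. Uniform integrability then follows from uniform integrability of the fixed law of $|X_t|^2$ plus a higher-moment bound for the increment driven by the Brownian part. In practice I would split $X_t$ on $\{|X_t|\le R\}$ and its complement.

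\textbf{Compactness: closedness (main obstacle).} Let $\P_k^t\to Q$. I must produce $\P\in\Pc_W(t,\nu)$ with $\P^t=Q$. I would take $\P$ to be $Q$ lifted to $\Om$ by freezing the path before $t$: set $X_s=X_t$, $W_s=0$ and $I_s=I_{t-}$ for $s<t$. Then:
\begin{itemize}
\item The law of $(X_t,I_{t-})$ passes to the limit, because $i\mapsto i_{t-}$ is continuous for $d_{\I,t}$. So it equals $\nu$.
\item The Brownian property with respect to the joint filtration is encoded by $\E[(w_s-w_r)\,\phi(\text{past up to } r)]=0$ and the analogous identity for quadratic variation, for bounded continuous $\phi$. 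The difficulty is that $i\mapsto i_r$ is discontinuous. I would restrict to $r$ outside the countable set of atoms of $\vartheta_t$ under $Q$; at such $r$, $i_r$ is $Q$-a.s. a continuity point. Right-continuity of the filtration completion then extends the identities to all $r$.
\item For the dynamics, I would use the martingale problem: for $\varphi\in C_c^2$, the process $\varphi(x_s)-\int_t^s \Lc_r\varphi\,dr$ is a martingale, where $\Lc_r$ has coefficients $b(r,x_r,\mu_r)i_r$ and $\sigma(r,x_r,\mu_r)i_r$. Joint cross-variation with $w$ is also needed so the SDE can be recovered. Stability as $k\to\infty$ follows from three facts: $\mu^k_r\to\mu_r$ for a.e.\ $r$, exactly as in the proof of Lemma \ref{lem:reward_lsc}; $i^k_r\to i_r$ $ds\otimes dQ$-a.e.; and the Lipschitz continuity of $b,\sigma$, combined with uniform integrability.
\end{itemize}
From the martingale identities I would recover \eqref{eq:dynamics} with $\mu_r=\Lc(X_r,I_r)$ by standard representation, using the fact that $W$ is already given. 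Handling the discontinuity of the survival coordinate inside these identities is where I expect the real work to lie.

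\textbf{Continuity of $H_t$.} If $Q_n\to Q$, then $p^{Q_n}_r\to p^Q_r$ at every $r$ that is not an atom of $\vartheta_t$ under $Q$, by weak convergence. This covers a.e.\ $r$. Since the integrand is bounded, dominated convergence gives $H_t(Q_n)\to H_t(Q)$.

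\textbf{Continuity of $\mathcal F_t$ on $\mathfrak K(t,\nu)$.} I would redo the coupling argument of Lemma \ref{lem:reward_lsc}, now applying it to both $f$ and $-f$, which are each lower semicontinuous under the strengthened continuity assumption. The shift by $C h$ works exactly as before because $\mathcal H_t(Q_n)\to\mathcal H_t(Q)$. This gives both the liminf and the limsup inequalities, hence continuity.
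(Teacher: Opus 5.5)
Your construction in the final lift of $Q$ back to the original canonical space does not produce an admissible rule. You freeze the pre-$t$ path with $W_s=0$ for $s<t$. For $t>0$ the canonical $W$ is then constant on $[0,t]$, so it is not a Brownian motion on $[0,T]$. Definition~\ref{def:stopping_rule}(i) requires $W$ to be an $\F$-Brownian motion on the whole horizon. Your lifted law therefore lies outside $\Pc_W(t,\nu)$, and you have not shown $Q\in\mathfrak K(t,\nu)$, so closedness (and hence compactness) is not established.

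The repair is the one the paper uses:
\begin{itemize}
\item Adjoin a Brownian motion $U$ on $[0,t]$ independent of $(X,B,I)\sim Q$.
\item Set $\overline{W}_r=U_r$ for $r\le t$ and $\overline{W}_r=U_t+B_r$ for $r\ge t$.
\item Keep $\overline{X}_r=X_t$ and $\overline{I}_r=I_{t-}$ before $t$.
\item Check that $\overline{W}_b-\overline{W}_a$ is centered Gaussian with covariance $(b-a)I_d$ and independent of $\overline{\Fc}_a$, in the three cases $b\le t$, $a<t<b$, and $t\le a$.
\end{itemize}
Because the frozen pre-$t$ coordinates reveal $(X_t,I_{t-})$ already at time $0$, the case $a<t<b$ needs $B$ to be independent of $(X_t,I_{t-})$. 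That is, $B$ must be Brownian in the filtration of $\Om_t$ that contains these initial coordinates. Your limiting identities must therefore use test functions of the past up to $r$ that include $(x_t,i_{t-})$, as the paper's cylinder functions do.

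The rest of your proposal is sound and follows the paper's architecture:
\begin{itemize}
\item tightness plus uniform integrability for relative compactness;
\item a joint $(x,w)$ martingale problem tested on a countable dense set of times avoiding the atoms of $\vartheta_t$ under $Q$;
\item continuity of $\mathcal F_t$ by applying Lemma~\ref{lem:reward_lsc} to $f$ and $-f$.
\end{itemize}
Two of your deviations are harmless. Localizing on the $\Fc_t$-measurable event $\{|X_t|\le R\}$ is a valid, somewhat lighter substitute for the paper's auxiliary McKean--Vlasov solution started from $\chi_R(X_t)$. It works because the flow $\mu$ is deterministic with $\sup_r M_2(\mu_r)$ bounded by Lemma~\ref{lem:moment_estimate}. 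Your portmanteau-plus-dominated-convergence proof of continuity of $H_t$ is also correct, and it even gives continuity under weak convergence. The paper instead uses the coupling bound $\int_t^T|p^{Q_k}_r-p^Q_r|\,dr\le\E\, d_{\I,t}(I^k,I)$ together with the Lipschitz property of $a\mapsto(\alpha-a)^+a$.
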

\begin{proof}
If $t=T$, the dynamic interval is empty, so $\mathcal F_T=H_T=0$.  Moreover, $\mathfrak K(T,\nu)$ consists of the laws on $\Om_T$ with prescribed $(X_T,I_{T-})$-marginal $\nu$.  Since the state marginal is fixed and $\I([T,T])$ is compact, this set is compact in $\Wc_{2,T}$.  We therefore assume $t<T$.

Let $\mathbb F^t=(\mathcal F_u^t)_{u\in[t,T]}$ be the raw canonical filtration on $\Om_t$, where $\mathcal F_u^t:=\sigma(X_t,I_{t-},X_r,B_r,I_r:t\le r\le u)$ and $B$ denotes the second coordinate.  We begin with a martingale characterization of admissibility.  For $\varphi\in C_c^2(\R^{n+d})$, set
\begin{equation}\label{eq:projected_martingale_problem}
\mathcal A_r^{m,i}\varphi(x,w):=i b(r,x,m)\cdot D_x\varphi(x,w)+\frac12\begin{pmatrix}i\sigma(r,x,m)\\ I_d\end{pmatrix}\begin{pmatrix}i\sigma(r,x,m)\\ I_d\end{pmatrix}^{\!\top}:D^2\varphi(x,w),
\end{equation}
and, for $Q\in\Pc_2(\Om_t)$ with $\mu_r^Q=\Lc^Q(X_r,I_r)$, define
\[
M_u^{Q,\varphi}:=\varphi(X_u,B_u)-\varphi(X_t,0)-\int_t^u\mathcal A_r^{\mu_r^Q,I_r}\varphi(X_r,B_r)\,dr.
\]
If $Q\in\mathfrak K(t,\nu)$, It\^o's formula gives $\Lc^Q(X_t,I_{t-})=\nu$ and shows that $M^{Q,\varphi}$ is a $Q$-martingale in $\mathbb F^t$ for every $\varphi\in C_c^2(\R^{n+d})$.

Conversely, suppose that these martingale identities and the initial-law condition
hold. Applying the identities to compactly supported test functions that agree
locally with the coordinate functions and their pairwise products shows that
the process
\[
M_u^X:=X_u-X_t-\int_t^u I_r b(r,X_r,\mu_r^Q)\,dr
\]
and $B$ are continuous local martingales, with covariations
\[
\langle B^\ell,B^j\rangle_u=\delta_{\ell j}(u-t),\qquad \langle M^{X,p},B^\ell\rangle_u=\int_t^u I_r\sigma_{p\ell}(r,X_r,\mu_r^Q)\,dr,
\]
and
\[
\langle M^{X,p},M^{X,q}\rangle_u=\int_t^u I_r(\sigma\sigma^\top)_{pq}(r,X_r,\mu_r^Q)\,dr.
\]
The multidimensional L\'evy characterization therefore implies that $B$ is
a Brownian motion with respect to the usual augmentation of $\mathbb F^t$, hence also
with respect to its completion. Subtracting
$\int_t^u I_r\sigma(r,X_r,\mu_r^Q)\,dB_r$ from the state local martingale
leaves a continuous local martingale with zero quadratic variation. It is
therefore identically zero, so the state equation holds. Thus the martingale
problem is equivalent to weak admissibility on $\Om_t$ and yields a Brownian motion in
the full canonical filtration $\mathbb F^t$.

To prove relative compactness, observe that the $B$-marginal is the fixed
Wiener law and the survival coordinate takes values in the compact space
$\I([t,T])$.  It therefore remains to control the state coordinate.  Let
$\chi_R:\R^n\to\R^n$ be bounded,
$1$-Lipschitz, and equal to the identity on the ball of radius $R$. For
$Q\in\mathfrak K(t,\nu)$, let $X^{R,Q}$ solve the same McKean--Vlasov
equation with the same $(B,I)$ but initial state $\chi_R(X_t)$. The standard
stability estimate gives
\begin{equation}\label{eq:truncation_estimate_compactness}
\sup_{Q\in\mathfrak K(t,\nu)}\E^Q\|X-X^{R,Q}\|_\infty^2\le C\int_{\Sbf}|x-\chi_R(x)|^2\,\nu(dx,di)\xrightarrow[R\to\infty]{}0.
\end{equation}
For fixed $R$, the bounded initial condition, the growth assumption, BDG, and Gronwall yield a constant $C_R$ such that, uniformly in $Q$,
\begin{equation}\label{eq:truncated_fourth_moment}
\E^Q\|X^{R,Q}\|_\infty^4\le C_R,\qquad \E^Q|X_v^{R,Q}-X_u^{R,Q}|^4\le C_R|v-u|^2.
\end{equation}
Kolmogorov's criterion therefore implies that the laws of $X^{R,Q}$ are tight
for each fixed $R$. If $\omega(x,\delta)$ denotes the usual modulus of
continuity, then
$\omega(X,\delta)\le2\|X-X^{R,Q}\|_\infty+\omega(X^{R,Q},\delta)$.
Consequently, for every $\eta>0$,
\[
\sup_Q Q(\omega(X,\delta)>\eta)\le \frac{16}{\eta^2}\sup_Q\E^Q\|X-X^{R,Q}\|_\infty^2+\sup_QQ(\omega(X^{R,Q},\delta)>\eta/2).
\]
First choose $R$ using \eqref{eq:truncation_estimate_compactness}, and then let $\delta\downarrow0$ in \eqref{eq:truncated_fourth_moment}.  Since the law of $X_t$ is fixed by $\nu$, the Arzel\`a--Ascoli criterion gives tightness of the state-coordinate laws in $C([t,T],\R^n)$.

To upgrade weak tightness to $\Wc_{2,t}$-relative compactness, we also need uniform integrability of the squared state norm.  By Cauchy--Schwarz, Markov's inequality, \eqref{eq:truncated_fourth_moment}, and the moment estimate, for every fixed $R$ and $K>0$,
\[
\sup_Q\E^Q\!\left[\|X\|_\infty^2\mathbf1_{\{\|X\|_\infty>K\}}\right]
\le C\sup_Q\E^Q\|X-X^{R,Q}\|_\infty^2+\frac{C_R}{K}.
\]
Letting $K\to\infty$ and then $R\to\infty$ proves uniform integrability.  The
$B$-coordinate is uniformly integrable because its law is fixed, and the
survival coordinate is bounded in $d_{\I,t}$.  Hence
$\mathfrak K(t,\nu)$ is relatively compact in $\Wc_{2,t}$.

To prove closedness, let $Q_k\in\mathfrak K(t,\nu)$ and
$Q_k\to Q$ in $\Wc_{2,t}$. By gluing asymptotically optimal couplings,
realize $(X^k,B^k,I^k)\sim Q_k$ and $(X,B,I)\sim Q$ on one probability
space so that
\begin{equation}\label{eq:coupled_W2_convergence}
\E\!\left[\|X^k-X\|_\infty^2+\|B^k-B\|_\infty^2+d_{\I,t}(I^k,I)^2\right]\longrightarrow0.
\end{equation}
The initial-law condition passes to the limit because $(x,w,i)\mapsto(x_t,i_{t-})$ is continuous.  If $\mu_r^k=\Lc^{Q_k}(X_r,I_r)$ and $\mu_r=\Lc^Q(X_r,I_r)$, then \eqref{eq:I_L1_bound} and the coupling imply
\begin{equation}\label{eq:marginal_flow_convergence}
\int_t^T\Wc_2^2(\mu_r^k,\mu_r)\,dr\le(T-t)\E\|X^k-X\|_\infty^2+\E d_{\I,t}(I^k,I)\longrightarrow0.
\end{equation}
The Lipschitz assumption gives convergence of the unstopped coefficients in
$L^2(dt\otimes d\P)$. Since \eqref{eq:I_L1_bound} also yields
$\int_t^T|I_r^k-I_r|\,dr\to0$ in $L^1$, the growth bound and a truncation
argument allow the indicators to be inserted. Thus
\begin{equation}\label{eq:stopped_coefficient_convergence}
\E\int_t^T\!|I_r^k b(r,X_r^k,\mu_r^k)-I_r b(r,X_r,\mu_r)|\,dr+\E\int_t^T\!|I_r^k\sigma(r,X_r^k,\mu_r^k)-I_r\sigma(r,X_r,\mu_r)|^2\,dr\to0.
\end{equation}
The second convergence also implies $L^1$ convergence of the corresponding quadratic and cross-covariation matrices by Cauchy--Schwarz.  Since $D\varphi$ and $D^2\varphi$ are bounded and uniformly continuous, \eqref{eq:coupled_W2_convergence} and \eqref{eq:stopped_coefficient_convergence} therefore give
\begin{equation}\label{eq:generator_L1_convergence}
\E\int_t^T\left|\mathcal A_r^{\mu_r^k,I_r^k}\varphi(X_r^k,B_r^k)-\mathcal A_r^{\mu_r,I_r}\varphi(X_r,B_r)\right|dr\longrightarrow0.
\end{equation}
In particular, the corresponding martingale increments converge in $L^1$ for
every fixed pair of times, providing the uniform integrability needed to pass
to the limit in the martingale identities.

The only remaining issue is that $i\mapsto i_r$ is not continuous when the stopping time has an atom at $r$.  The set
\[
\{r\in(t,T):Q(I_{t-}=1,\vartheta_t(I)=r)>0\}
\]
is at most countable, so choose a countable dense $D\subset(t,T)$ avoiding it.  For every $r\in D$, evaluation of the survival coordinate is $Q$-a.s. continuous; hence $I_r^k\to I_r$ in probability under the coupling.

Fix $u<v$ in $D$.  Let $G_k$ be any bounded continuous cylinder function of $(X_t^k,I_{t-}^k)$ and finitely many values $(X_r^k,B_r^k,I_r^k)$ at times $r\in D\cap(t,u]$, and let $G$ denote the same cylinder evaluated at the limit coordinates.  Then $G_k\to G$ in probability and the sequence is uniformly bounded.  Since $Q_k$ solves the martingale problem,
\[
\E[(M_v^{Q_k,\varphi}-M_u^{Q_k,\varphi})G_k]=0.
\]
The $L^1$ convergence from \eqref{eq:generator_L1_convergence} and boundedness
of $G_k$ allow passage to the limit:
\[
    \E^Q[(M_v^{Q,\varphi}-M_u^{Q,\varphi})G]=0.
\]
For fixed $u\in D$, these cylinders generate $\mathcal F_u^t$: the $X$- and
$B$-coordinates are continuous, while the survival paths are right-continuous,
so every earlier evaluation is recovered from times in $D$ decreasing to it.
A monotone-class argument therefore extends the identity to every bounded
$\mathcal F_u^t$-measurable $G$.

Finally, for arbitrary $t\le u<v\le T$, choose $u_k,v_k\in D$ such that
$u_k\downarrow u$, $v_k\to v$, and $u_k<v_k$; when $v=T$, take
$v_k\uparrow T$. The process $M^{Q,\varphi}$ is $L^1$-continuous because
$\varphi$ is bounded and the generator is integrable. Passing to the limit
proves the martingale identity for all $u<v$, first in the raw filtration and
then in its $Q$-completion. By the martingale characterization at the
beginning of the proof, $B$ is Brownian in the full canonical filtration
$\mathbb F^t$ and the state equation holds under $Q$.

It remains to realize $Q$ as the image under $\Pi_t$ of a law on the original canonical space.  Let $U$ be a $d$-dimensional Brownian motion on $[0,t]$, independent of $(X,B,I)\sim Q$, and set $\overline W_r=U_r$ for $r\le t$ and $\overline W_r=U_t+B_r$ for $r\ge t$.  Before $t$, set $\overline X_r=X_t$ and $\overline I_r=I_{t-}$, and from $t$ onward keep the coordinates distributed according to $Q$.  Independence of $U$ and the Brownian property of $B$ in $\mathbb F^t$ imply, for every $0\le a<b\le T$ and $\xi\in\R^d$,
\[
\E\!\left[e^{i\xi\cdot(\overline W_b-\overline W_a)}\mid\overline{\mathcal F}_a\right]=e^{-\frac12|\xi|^2(b-a)}.
\]
Indeed, the cases $b\le t$, $a<t<b$, and $t\le a$ follow respectively from
that $U$ is a Brownian motion, independence of $U$ and the system on $\Om_t$, and
that $B$ is a $\mathbb F^t$-Brownian motion. Thus $\overline W$ is a
Brownian motion in the completed canonical filtration. The state equation and
initial law are inherited from $Q$, and the image under $\Pi_t$ of the
resulting weak rule is exactly $Q$. Hence $Q\in\mathfrak K(t,\nu)$, proving
closedness and therefore compactness.

We finish by checking continuity of the two functionals.  If $Q_k\to Q$ in $\Wc_{2,t}$, use the same coupling as above.  Since $a\mapsto(\alpha-a)^+a$ is Lipschitz on $[0,1]$,
\[
\int_t^T|p_r^{Q_k}-p_r^Q|\,dr\le\E\int_t^T|I_r^k-I_r|\,dr\le\E d_{\I,t}(I^k,I)\longrightarrow0,
\]
which proves continuity of $H_t$.  Under the additional continuity assumption on $f$, Lemma \ref{lem:reward_lsc} applies to both $f$ and $-f$, so $\mathcal F_t$ is both lower and upper semicontinuous, hence continuous on $\mathfrak K(t,\nu)$.
\end{proof}

For $\eps>0$, the penalized value can be written in terms of laws on $\Om_t$ as
\begin{equation}\label{eq:penalized_value}
 V_\eps(t,\nu)
 :=\sup_{Q\in\mathfrak K(t,\nu)}
       \left\{\mathcal F_t(Q)-\frac1\eps H_t(Q)\right\}.
\end{equation}
This is also the supremum over $\Pc_W(t,\nu)$, since both terms
depend only on the projected law $\P^t$.

\begin{theorem}[Existence]
\label{thm:existence}
Suppose Assumption \ref{ass:coefficient_reward} holds and, for every
$r\in[0,T]$, the map
    $(x,m)\longmapsto f(r,x,m)$
is continuous on $\R^n\times\Pc_2(\Sbf)$. Then the following assertions hold
for every
$(t,\nu)\in[0,T]\times\Pc_2(\Sbf)$.

\begin{enumerate}
\item[(i)] The penalized problem \eqref{eq:penalized_value}
has a maximizer for every $\eps>0$.

\item[(ii)] The singular weak problem has a maximizer. In particular,
there exists $\P^*\in\widetilde\Pc_W(t,\nu)$ such that
$J_t(\P^*)=V_W(t,\nu)$. Consequently, by Theorem
\ref{thm:equivalence_value_function}, the weak and strong values agree,
although the existence of a strong optimizer is not asserted. 

\item[(iii)] As $\eps\downarrow0$,
       $V_\eps(t,\nu)\downarrow V_W(t,\nu).$
If $\eps_k\downarrow0$ and $\P_k$ is a maximizer of
$V_{\eps_k}(t,\nu)$ for each $k$, then every $\Wc_{2,t}$-limit point $Q_0$ of
$\{\P_k^t\}$ is of the form $Q_0=\P_0^t$ for an optimal
$\P_0\in\widetilde\Pc_W(t,\nu)$.  Moreover,
\[
 \frac1{\eps_k}H_t(\P_k^t)\longrightarrow0.
\]
\end{enumerate}
\end{theorem}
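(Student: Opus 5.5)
The plan is to work entirely on the compact set $\mathfrak K(t,\nu)$ of Lemma \ref{lem:projected_law_compactness}. There $\mathcal F_t$ and $H_t$ are continuous, and both $J_t$ on the constrained class and the penalized objective depend only on $\P^t$. The case $t=T$ is trivial, since all functionals vanish and any element of $\mathfrak K(T,\nu)$, which is nonempty, is optimal. So assume $t<T$.

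For (i), the map $Q\mapsto\mathcal F_t(Q)-\eps^{-1}H_t(Q)$ is continuous on the compact metric space $\mathfrak K(t,\nu)$. Nonemptiness follows from the construction at $t=T$ in Theorem \ref{thm:equivalence_value_function}, or from solving the McKean--Vlasov equation with $I\equiv I_{t-}$. Weierstrass then gives a maximizer $Q_\eps$. Writing $Q_\eps=\P_\eps^t$ with $\P_\eps\in\Pc_W(t,\nu)$ gives a maximizer over $\Pc_W(t,\nu)$.

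For (iii) and then (ii), I will compare values. Let $\widetilde{\mathfrak K}:=\{Q\in\mathfrak K(t,\nu):H_t(Q)=0\}$. This set is closed by continuity of $H_t$, hence compact, and it is nonempty because Lemma \ref{lem:constraint_reduction} maps any rule into the constrained class. By Lemma \ref{lem:constraint_reduction} and \eqref{eq:objective_on_constrained_set}, $V_W(t,\nu)=\sup_{\widetilde{\mathfrak K}}\mathcal F_t$. Since $H_t\ge0$ vanishes on $\widetilde{\mathfrak K}$, $V_\eps$ is nonincreasing as $\eps\downarrow0$, and $V_\eps\ge V_W$. Now take $\eps_k\downarrow0$ with maximizers $Q_k=\P_k^t$.

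\begin{itemize}
\item Compactness gives $\mathcal F_t(Q_k)\le\max_{\mathfrak K}\mathcal F_t=:M<\infty$, so $\eps_k^{-1}H_t(Q_k)\le M-V_W$. Hence $H_t(Q_k)\to0$.
\item Along any subsequence with $Q_k\to Q_0$, continuity gives $H_t(Q_0)=0$, so $Q_0\in\widetilde{\mathfrak K}$.
\item Since $\eps_k^{-1}H_t(Q_k)\ge0$, we get $\limsup V_{\eps_k}\le\limsup\mathcal F_t(Q_k)=\mathcal F_t(Q_0)\le V_W$. Combined with $V_{\eps}\ge V_W$, this yields $V_{\eps_k}\to V_W$, and monotonicity upgrades this to $V_\eps\downarrow V_W$.
\item The same sandwich forces $\mathcal F_t(Q_0)=V_W$ and $\mathcal F_t(Q_k)-V_{\eps_k}=\eps_k^{-1}H_t(Q_k)\to0$ along the subsequence. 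Since every subsequence has such a further subsequence, the penalty term tends to $0$ along the full sequence.
\end{itemize}

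Writing $Q_0=\P_0^t$ with $\P_0\in\Pc_W(t,\nu)$, the condition $H_t(Q_0)=0$ says exactly that $\P_0\in\widetilde\Pc_W(t,\nu)$. Then $J_t(\P_0)=\mathcal F_t(Q_0)=V_W(t,\nu)$, which proves (ii) and identifies the limit points in (iii). The link to strong values is Theorem \ref{thm:equivalence_value_function}(iii).

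The main subtle point is identification: a limit law in $\mathfrak K$ must come from a genuine weak rule. Closedness in Lemma \ref{lem:projected_law_compactness} supplies this, including the lifting from $\Om_t$ to $\Om$. The remaining care is to use $J_t=\mathcal F_t$ only on constrained laws, because the indicator in $J_t$ is discontinuous. This is why the argument passes through $\widetilde{\mathfrak K}$ rather than taking limits in $J_t$ directly.
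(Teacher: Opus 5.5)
Your proposal is correct and follows the paper's proof essentially step by step. It applies Weierstrass on the compact set $\mathfrak K(t,\nu)$ with continuous $\mathcal F_t$ and $H_t$ for (i). It then identifies $V_W(t,\nu)$ with the supremum of $\mathcal F_t$ over the zero-penalty set via Lemma~\ref{lem:constraint_reduction}, uses the bound $H_t(\P_k^t)\le\eps_k\big(C_0-V_W(t,\nu)\big)$ and the sandwich along convergent subsequences, upgrades to the full limit by monotonicity in $\eps$, and lifts limit laws through the closedness part of Lemma~\ref{lem:projected_law_compactness}.

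The differences are cosmetic:
\begin{itemize}
\item The paper obtains (ii) directly by maximizing $\mathcal F_t$ on the compact zero-penalty set, rather than extracting the optimizer from the penalized limit.
\item It bounds $\mathcal F_t$ via growth and moment estimates rather than by its maximum on $\mathfrak K(t,\nu)$.
\item It gets nonemptiness by stopping all surviving particles at $t$. Your second option also works, but the $t=T$ construction from Theorem~\ref{thm:equivalence_value_function} does not satisfy the dynamics on $[t,T]$ when $t<T$.
\end{itemize}
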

{
\begin{remark}
    The existence of a maximizer for the singular weak problem is due to the compactness of the set $\{Q\in\mathfrak K(t,\nu):H_t(Q)=0\}$. The penalized problem provides an approximation of the singular weak problem.
\end{remark}
}
\begin{proof}
By Lemma \ref{lem:projected_law_compactness}, the set
$\mathfrak K(t,\nu)$ is compact and both $\mathcal F_t$ and $H_t$ are
continuous.  Hence the penalized objective attains its maximum, proving
\textnormal{(i)}.

The set
\[
    \mathfrak K_0(t,\nu)
    :=\{Q\in\mathfrak K(t,\nu):H_t(Q)=0\}
\]
is compact and nonempty; for example, one may stop all surviving particles at
time $t$.  By Lemma
\ref{lem:constraint_reduction},
\begin{equation}\label{eq:singular_value_compact_form}
    V_W(t,\nu)
    =\max_{Q\in\mathfrak K_0(t,\nu)}\mathcal F_t(Q).
\end{equation}
Thus a maximizer $Q^*$ exists.  The closedness result in Lemma
\ref{lem:projected_law_compactness} yields a weak rule $\P^*$ with
$(\P^*)^t=Q^*$, proving \textnormal{(ii)}.

Since every $Q\in\mathfrak K_0(t,\nu)$ has zero penalty,
\begin{equation}\label{eq:penalized_lower_bound}
    V_{\eps_k}(t,\nu)\ge V_W(t,\nu).
\end{equation}
The growth and moment estimates give a finite constant $C_0$, independent
of $Q\in\mathfrak K(t,\nu)$, such that $\mathcal F_t(Q)\le C_0$.
Optimality and \eqref{eq:penalized_lower_bound} imply
\begin{equation}\label{eq:penalty_O_epsilon}
 0\le H_t(\P_k^t)
 \le \eps_k\big(\mathcal F_t(\P_k^t)-V_W(t,\nu)\big)
 \le \eps_k\big(C_0-V_W(t,\nu)\big),
\end{equation}
so $H_t(\P_k^t)\to0$.

By compactness, after passing to a subsequence without relabeling, we may
assume
$\P_k^t\to Q_0$ in $\Wc_{2,t}$.  Continuity of $H_t$ gives
$H_t(Q_0)=0$.  Moreover,
\[
 0\le \frac{H_t(\P_k^t)}{\eps_k}
 \le \mathcal F_t(\P_k^t)-V_W(t,\nu).
\]
Taking the upper limit and using continuity of $\mathcal F_t$ yields
\[
 0\le\varlimsup_{k\to\infty}\frac{H_t(\P_k^t)}{\eps_k}
 \le \mathcal F_t(Q_0)-V_W(t,\nu)\le0,
\]
where the final inequality follows from
$Q_0\in\mathfrak K_0(t,\nu)$ and
\eqref{eq:singular_value_compact_form}.  Hence
$H_t(\P_k^t)/\eps_k\to0$ and
$\mathcal F_t(Q_0)=V_W(t,\nu)$.  Therefore
$Q_0=\P_0^t$ for some optimal constrained weak rule $\P_0$, and
\[
 V_{\eps_k}(t,\nu)
 =\mathcal F_t(\P_k^t)-\frac{H_t(\P_k^t)}{\eps_k}
 \longrightarrow V_W(t,\nu).
\]
The same conclusion holds for every convergent subsequence.  Since
$\eps\mapsto V_\eps(t,\nu)$ is nondecreasing in $\eps$ (equivalently,
nonincreasing as $\eps\downarrow0$), the full limit follows.
\end{proof}

To formulate the dynamic programming principle, define, for
$\P\in\Pc_W(t,\nu)$,
\[
 \mu_r^\P:=\Lc^\P(X_r,I_r),
\qquad
 R_{t,s}(\P)
 :=\E^\P\!\left[
      \int_t^s f(r,X_r,\mu_r^\P)I_r
      \mathbf 1_{[\alpha,1]}(\E^\P[I_r])\,dr
   \right],
\]
and
\[
 m_{s-}^\P:=\Lc^\P(X_s,I_{s-}),
 \qquad
 m_s^\P:=\Lc^\P(X_s,I_s).
\]
Here $X_s=X_{s-}$ because the state process is continuous.

\begin{lemma}[Weak concatenation]
\label{lem:weak_concatenation}
Let $t\le s\le T$, $\P\in\Pc_W(t,\nu)$, and
$\mathsf Q\in\Pc_W(s,m_{s-}^\P)$.  There exists
$\mathsf R=\P\otimes_s\mathsf Q\in\Pc_W(t,\nu)$ such that
\begin{enumerate}
\item[(i)] $\mathsf R$ and $\P$ agree on $\Fc_{s-}$;
\item[(ii)] $\mathsf R^s=\mathsf Q^s$;
\item[(iii)] $J_t(\mathsf R)=R_{t,s}(\P)+J_s(\mathsf Q)$.
\end{enumerate}
\end{lemma}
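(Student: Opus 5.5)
We construct $\mathsf R$ by gluing regular conditional laws at time $s$ and then check the three properties. The case $s=T$ is degenerate but fits the same scheme, so assume $t\le s\le T$ in general. Since $\mathsf Q\in\Pc_W(s,m_{s-}^\P)$, the law of $(X_s,I_{s-})$ is the same under $\P$ and $\mathsf Q$, namely $m_{s-}^\P$.

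The construction proceeds as follows.
\begin{enumerate}
\item[1.] On an extended space, take $\omega\sim\P$.
\item[2.] Disintegrate $\mathsf Q^s$ with respect to its initial coordinate $(x_s,i_{s-})$, obtaining a kernel $\kappa(y,\cdot)$ on $\Om_s$. Such a kernel exists because $\Om_s$ is Polish.
\item[3.] Conditionally on $\omega$, draw $(X',B',I')\sim\kappa\big((X_s(\omega),I_{s-}(\omega)),\cdot\big)$. This makes the post-$s$ piece conditionally independent of $\Fc_{s-}$ given $(X_s,I_{s-})$.
\item[4.] Define the concatenated path:
\begin{itemize}
\item $\overline X=X$ on $[0,s]$ and $\overline X=X'$ on $[s,T]$;
\item $\overline W=W$ on $[0,s]$ and $\overline W=W_s+B'$ on $[s,T]$;
\item $\overline I=I$ on $[0,s)$ and $\overline I=I'$ on $[s,T]$.
\end{itemize}
The survival path is consistent because $I'_{s-}=I_{s-}$ almost surely. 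Both state pieces equal $X_s$ at time $s$, so $\overline X$ is continuous.
\item[5.] Let $\mathsf R$ be the law of $(\overline X,\overline W,\overline I)$.
\end{enumerate}
Properties (i) and (ii) are then immediate. For (ii), the $\Om_s$-projection of $\mathsf R$ has first marginal $m_{s-}^\P$ mixed with $\kappa$, which gives exactly $\mathsf Q^s$.

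Next we verify $\mathsf R\in\Pc_W(t,\nu)$. The initial law of $(X_t,I_{t-})$ is unchanged because $t\le s$ and (i) holds; if $t=s$, it is still $\nu=m_{t-}^\P$. For the Brownian property of $\overline W$ in the completed canonical filtration, use the characteristic function test already employed at the end of the proof of Lemma \ref{lem:projected_law_compactness}, split into the cases $b\le s$, $a<s<b$ and $s\le a$.
\begin{itemize}
\item For $b\le s$, it follows from $\P$.
\item For $s\le a$, it follows from the fact that $B'$ is Brownian in $\mathbb F^s$ under $\mathsf Q^s$, applied conditionally on $\Fc_{s-}$ through the kernel. Here one uses that, given $(X_s,I_{s-})$, the new piece is independent of $\Fc_{s-}$, and that $B'$ is a Brownian motion independent of the initial coordinate.
\item For $a<s<b$, combine the two previous cases by the tower property.
\end{itemize}

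For the dynamics, the marginal flow $\mu_r^{\mathsf R}$ equals $\mu_r^\P$ for $r<s$ and $\mu_r^{\mathsf Q}$ for $r\ge s$. The state equation therefore holds on $[t,s]$ from $\P$ and on $[s,T]$ from $\mathsf Q$ (using the martingale characterization in Lemma \ref{lem:projected_law_compactness} for $\mathsf Q^s$). The two stochastic integrals glue at $s$ because $\overline W_r-\overline W_s=B'_r$.

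Property (iii) follows by splitting the integral in $J_t(\mathsf R)$ at $s$. The integrand depends on $r$ only through $(X_r,I_r)$, the law $\mu_r$, and $\E[I_r]$, and these coincide with their $\P$-versions for $r<s$ and with their $\mathsf Q$-versions for $r\ge s$. The threshold indicator is a function of the time-$r$ marginal alone, so the split is exact.

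The main obstacle is the Brownian property across $s$ in the full filtration, which includes the pre-$s$ history of $\mathsf R$. This history is richer than what $\mathsf Q$ sees, so we must make sure the kernel construction really yields conditional independence of the increments of $B'$ from $\Fc_{s-}\vee\sigma(\text{post-}s\text{ path up to }a)$. The key fact is that under $\mathsf Q^s$, $B'$ is Brownian with respect to a filtration containing $(X_s,I_{s-})$, and $\Fc_{s-}$ enters only through $(X_s,I_{s-})$.
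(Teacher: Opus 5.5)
Your proposal is correct and follows essentially the same route as the paper: disintegrate $\mathsf Q^s$ along the initial coordinate $(x_s,i_{s-})$, and glue a kernel draw to the $\P$-history conditionally independently given $(X_s,I_{s-})$. Admissibility and the splitting of $J_t$ then follow because the marginal flow is that of $\P$ before $s$ and that of $\mathsf Q$ from $s$ on; the only difference is that you check the Brownian property across $s$ directly by the characteristic-function case split (as in the final lifting step of Lemma~\ref{lem:projected_law_compactness}), whereas the paper phrases this step through the conditioned martingale problem.
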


\begin{proof}
Since $\Om_s$ is Polish, $\mathsf Q^s$ can be disintegrated with respect to
its initial coordinate:
\[
 \mathsf Q^s(dz)=\int_{\Sbf}K(y,dz)m_{s-}^\P(dy),
 \qquad
 K\big(y,\{z:(x_s,i_{s-})=y\}\big)=1
\]
for $m_{s-}^\P$-a.e. $y$.  Glue a $\P$-history up to $s-$ to a draw
from $K(Y_{s-},\cdot)$ that is conditionally independent of the history given
$Y_{s-}$.  The state paths match at $s$; the Brownian path is glued by
adding its increment from time $s$ to the value $W_s$ of the history; and the
survival paths match because their common pre-$s$ value is $I_{s-}$.

Admissibility follows from the martingale
characterization used in Lemma \ref{lem:projected_law_compactness}.  Conditioning
the martingale identities on $\Om_s$ under $\mathsf Q$ on $Y_{s-}=y$ shows that,
for $m_{s-}^\P$-a.e. $y$, the kernel $K(y,\cdot)$ solves the
martingale problem with the deterministic marginal flow of $\mathsf Q$.
Conditional product gluing therefore preserves the Brownian martingale
identities across $s$.  Before $s$, the marginal flow is that of $\P$;
after $s$, it is that of $\mathsf Q$, because the image of the concatenated law
under $\Pi_s$ is exactly $\mathsf Q^s$.  Thus the glued law belongs to
$\Pc_W(t,\nu)$.  The same marginal identities show that the running
reward before $s$ is that of $\P$ and after $s$ is that of $\mathsf Q$, which
proves \textnormal{(iii)}.
\end{proof}

We also use the instantaneous-stopping order.  For
$m',m\in\Pc_2(\Sbf)$ write $m'\preceq m$ if there exists a probability
measure $\lambda$ on
$\R^n\times\{0,1\}\times\{0,1\}$ such that
\[
 \lambda\circ(x,i^-)^{-1}=m,
 \qquad
 \lambda\circ(x,i)^{-1}=m',
 \qquad
 \lambda(i\le i^-)=1.
\]
Thus $m'$ is obtained from $m$ by stopping a possibly randomized portion of
the surviving mass while leaving the state $x$ unchanged.

\begin{lemma}
\label{lem:instantaneous_stopping_monotonicity}
If $m'\preceq m$, then
\[
       V_W(s,m)\ge V_W(s,m').
\]
Consequently, by strong--weak equivalence, the same inequality holds with
$V_S$ in place of $V_W$.
\end{lemma}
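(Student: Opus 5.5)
The plan is to take an arbitrary weak rule $\mathsf Q'\in\Pc_W(s,m')$ and build from it a rule $\mathsf Q\in\Pc_W(s,m)$ with $J_s(\mathsf Q)=J_s(\mathsf Q')$. Taking the supremum over $\mathsf Q'$ then gives $V_W(s,m)\ge V_W(s,m')$. The case $V_S$ follows from Theorem \ref{thm:equivalence_value_function}(iii).

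The construction differs from $\mathsf Q'$ only in the pre-$s$ survival value. Since $\Om_s$ is Polish, we disintegrate $(\mathsf Q')^s$ with respect to its initial coordinate $(x_s,i_{s-})$:
\[
 (\mathsf Q')^s(dz)=\int_{\Sbf}K(y,dz)\,m'(dy),
\]
as in the proof of Lemma \ref{lem:weak_concatenation}. On a product space we then proceed as follows.
\begin{enumerate}
\item[(a)] Draw $(\xi,j^-,j)\sim\lambda$.
\item[(b)] Conditionally on $(\xi,j^-,j)$, draw $z=(x,w,i)$ from $K((\xi,j),\cdot)$, independently of $j^-$ given $(\xi,j)$.
\item[(c)] Replace the value $i_{s-}=j$ by $j^-$, keeping $i_r$ for $r\ge s$.
\end{enumerate}
Because $\lambda(j\le j^-)=1$ and the kernel path is nonincreasing starting from $j$, the modified path is $\{0,1\}$-valued, nonincreasing and càdlàg. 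Hence it lies in $\I([s,T])$. On $\{j^-=1,j=0\}$ it has $\vartheta_s=s$, which means these particles are stopped instantaneously at $s$. By construction the initial law is $\Lc(\xi,j^-)=\lambda\circ(x,i^-)^{-1}=m$. Let $Q$ denote the resulting law on $\Om_s$.

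Next I verify admissibility through the martingale characterization established in the proof of Lemma \ref{lem:projected_law_compactness}.
\begin{enumerate}
\item[(1)] The coordinates $(X_r,B_r,I_r)_{r\in[s,T]}$ under $Q$ have exactly the same joint law as under $(\mathsf Q')^s$, because $(\xi,j)\sim m'$ and the kernel is unchanged. In particular $\mu_r=\Lc(X_r,I_r)$ coincides with the flow of $\mathsf Q'$ for every $r\ge s$.
\item[(2)] The generator $\mathcal A_r^{\mu_r,I_r}$ and the state equation involve only $I_r$ for $r\ge s$, never $I_{s-}$. So the martingale identities for $M^{Q,\varphi}$ reduce to those of $\mathsf Q'$, except that the filtration $\mathbb F^s$ now also contains the time-$s$ variable $j^-$.
\end{enumerate}

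The step I expect to be the main (though modest) obstacle is showing that $M^{Q,\varphi}$ is still a martingale, equivalently that $B$ is still Brownian, in this enlarged filtration. I would handle it by conditioning on the initial coordinate. For $m'$-a.e. $y$, the kernel $K(y,\cdot)$ solves the martingale problem with the fixed deterministic flow $(\mu_r)$, exactly as argued in Lemma \ref{lem:weak_concatenation}. Since $z$ is conditionally independent of $j^-$ given $(\xi,j)$, for a test variable $G\,h(j^-)$, with $G$ an $\mathcal F_u^s$-cylinder of $z$, we obtain
\[
\E\big[(M_v-M_u)\,G\,h(j^-)\big]=\E\Big[\E\big[h(j^-)\mid \xi,j\big]\;\E^{K((\xi,j),\cdot)}\big[(M_v-M_u)\,G\big]\Big]=0.
\]
A monotone-class argument then gives the martingale property in the full filtration generated by $z$ and $j^-$. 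Equivalently, this is the extension in the remark after Lemma \ref{lem:input_law_invariance}, with $j^-$ as an additional time-$s$ variable.

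Finally, $Q$ is lifted to a law $\mathsf Q\in\Pc_W(s,m)$ on $\Om$ by prepending an independent Brownian motion on $[0,s]$ and constant pre-$s$ paths, exactly as in the closedness part of Lemma \ref{lem:projected_law_compactness}. Since $J_s$ depends only on $(X_r,I_r,\mu_r,\E[I_r])$ for $r\ge s$, whose laws agree with those under $\mathsf Q'$ by (1), we get $J_s(\mathsf Q)=J_s(\mathsf Q')$. This completes the argument.
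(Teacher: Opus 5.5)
Your proposal is correct and follows essentially the same route as the paper. Both glue $\lambda$ and $(\mathsf Q')^s$ conditionally independently along their common $(x,i)$-marginal $m'$, replace only the pre-$s$ status by $i^-$, and observe that the dynamics, marginal flow, and reward on $[s,T]$ are unchanged. The paper disintegrates $\lambda$ instead of $(\mathsf Q')^s$, but this gives the same joint law. Your explicit check of the martingale identities in the filtration enlarged by $j^-$ spells out the paper's one-line remark that the added variable is conditionally independent of future Brownian increments.
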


\begin{proof}
Fix $\mathsf Q\in\Pc_W(s,m')$ and a coupling $\lambda$ as above.  Disintegrate
$\lambda$ with respect to $(x,i)$ and, conditionally on
$(X_s,I_{s-})$ under $\mathsf Q$, introduce an ancestor status $i^-$ using the
resulting kernel.  Replace only the pre-$s$ status by $i^-$ and keep the
entire trajectory from time $s$ onward under $\mathsf Q$.  Since $I_s\le I_{s-}\le i^-$, the glued
survival path is admissible and implements the additional stopping at time
$s$.  The added variable is conditionally independent of future
Brownian increments, so the Brownian property is preserved.  The dynamics and the
objective on $[s,T]$ are unchanged, while the new initial law is $m$.
Thus every payoff attainable from $m'$ is attainable from $m$, and the
claim follows by taking the supremum.
\end{proof}

\begin{theorem}[Dynamic Programming Principle]
\label{thm:dpp}
Suppose Assumption \ref{ass:coefficient_reward} holds. Then, for every
$(t,\nu)\in[0,T]\times\Pc_2(\Sbf)$ and $s\in[t,T]$,
\begin{align}
 V_W(t,\nu)
 &=\sup_{\P\in\Pc_W(t,\nu)}
      \left\{R_{t,s}(\P)+V_W(s,m_{s-}^\P)\right\}
 \label{eq:dpp_weak_pre}
 \\
 &=\sup_{\P\in\Pc_W(t,\nu)}
      \left\{R_{t,s}(\P)+V_W(s,m_s^\P)\right\}.
 \label{eq:dpp_weak_post}
\end{align}
Consequently,
\begin{align}
 V_S(t,\nu)
 &=\sup_{\P\in\Pc_S(t,\nu)}
      \left\{R_{t,s}(\P)+V_S(s,m_{s-}^\P)\right\}
 \label{eq:dpp_strong_pre}
 \\
 &=\sup_{\P\in\Pc_S(t,\nu)}
      \left\{R_{t,s}(\P)+V_S(s,m_s^\P)\right\}.
 \label{eq:dpp_strong_post}
\end{align}
\end{theorem}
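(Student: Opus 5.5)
We prove the two weak identities \eqref{eq:dpp_weak_pre} and \eqref{eq:dpp_weak_post} by establishing a chain of inequalities. The strong versions then follow directly from Theorem \ref{thm:equivalence_value_function}(iii), once one checks that restricting the outer supremum to $\Pc_S(t,\nu)$ does not lower the right-hand side. Denote the right-hand sides of \eqref{eq:dpp_weak_pre} and \eqref{eq:dpp_weak_post} by $\mathcal D_-$ and $\mathcal D_+$. The plan is to show
\[
V_W(t,\nu)\le\mathcal D_+\le\mathcal D_-\le V_W(t,\nu).
\]

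\emph{Step 1: $V_W\le\mathcal D_+$.} Fix $\P\in\Pc_W(t,\nu)$. Because the threshold indicator in \eqref{eq:objective} depends only on the deterministic marginal $\E^\P[I_r]$, the objective splits as $J_t(\P)=R_{t,s}(\P)+\E^\P[\int_s^T\cdots]$. The second term equals $J_s(\P)$ when $\P$ is regarded as a rule started at time $s$ with initial law $m_{s-}^\P$. It also equals $J_s$ of the same law regarded as starting from $m_s^\P$: that is, we replace the pre-$s$ status $I_{s-}$ by $I_s$, which changes neither the path on $[s,T]$ nor the Brownian property, since $I_s$ is $\Fc_s$-measurable. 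Either version is $\le V_W(s,m_s^\P)$. This gives $J_t(\P)\le R_{t,s}(\P)+V_W(s,m_s^\P)$, and taking the supremum yields $V_W\le\mathcal D_+$. The same bound also gives $V_W\le\mathcal D_-$.

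\emph{Step 2: $\mathcal D_+\le\mathcal D_-$.} By construction $m_s^\P\preceq m_{s-}^\P$: take $\lambda=\Lc^\P(X_s,I_{s-},I_s)$, which satisfies $I_s\le I_{s-}$. Lemma \ref{lem:instantaneous_stopping_monotonicity} then gives $V_W(s,m_s^\P)\le V_W(s,m_{s-}^\P)$ termwise.

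\emph{Step 3: $\mathcal D_-\le V_W$, the main step.} Fix $\P\in\Pc_W(t,\nu)$ and $\eps>0$. Choose $\mathsf Q\in\Pc_W(s,m_{s-}^\P)$ with $J_s(\mathsf Q)\ge V_W(s,m_{s-}^\P)-\eps$; this is possible without any measurable selection, since the initial law is deterministic. Lemma \ref{lem:weak_concatenation} gives $\mathsf R=\P\otimes_s\mathsf Q\in\Pc_W(t,\nu)$ with $J_t(\mathsf R)=R_{t,s}(\P)+J_s(\mathsf Q)$. Hence $V_W(t,\nu)\ge R_{t,s}(\P)+V_W(s,m_{s-}^\P)-\eps$, and letting $\eps\downarrow0$ gives the claim.

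The main obstacle is verifying that Lemma \ref{lem:weak_concatenation}(iii) genuinely holds with the threshold indicator. After $s$ the marginals $\E^{\mathsf R}[I_r]$ coincide with those of $\mathsf Q$, by property (ii) of that lemma. Before $s$ they coincide with those of $\P$, by property (i). So the indicator factors match on each piece, and no constraint reduction is needed. The edge case $s=T$ is trivial, since $V_W(T,\cdot)=0$ and $R_{t,T}=J_t$.

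For the strong statements \eqref{eq:dpp_strong_pre} and \eqref{eq:dpp_strong_post}, we replace $V_W$ by $V_S$ using Theorem \ref{thm:equivalence_value_function}(iii). The inequalities $\le$ for the strong supremum follow from Steps 1–2 applied to strong $\P$. For $\ge$, note that the strong supremum is no larger than the weak one, which equals $V_W=V_S$; this handles the other direction of the comparison. For the converse, we approximate any weak $\P$ by strong rules through Theorem \ref{thm:equivalence_value_function}(i) and would need lower semicontinuity of $\P\mapsto R_{t,s}(\P)+V_S(s,m^\P_{s\pm})$, which is delicate. It is simpler to avoid this: the direct argument of Step 1 applied to strong rules gives $V_S\le$ the strong right-hand side, and the strong right-hand side is at most the weak one, which equals $V_W=V_S$. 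This closes the chain without any approximation step.
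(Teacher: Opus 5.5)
Your proof is correct, and its skeleton matches the paper's. You use the restriction of $\P$ to $[s,T]$ for the upper bound, Lemma \ref{lem:weak_concatenation} with an $\eps$-optimal continuation for the lower bound, Lemma \ref{lem:instantaneous_stopping_monotonicity} to compare $m_{s-}^\P$ with $m_s^\P$, and $V_S=V_W$ for the strong identities.

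The one genuine difference is how you reach \eqref{eq:dpp_weak_post}. The paper bounds $J_t(\P)$ only through the pre-jump law $m_{s-}^\P$. It then proves $\mathcal D_-\le\mathcal D_+$ by a no-jump modification: it freezes the survival process at $I_{s-}$ on $[s,T]$ and re-solves the McKean--Vlasov equation, so that every candidate value in the pre-jump supremum reappears in the post-jump one. You instead restart the tail of $\P$ itself from the post-jump law $m_s^\P$. This gives the sharper bound $J_t(\P)\le R_{t,s}(\P)+V_W(s,m_s^\P)$ and closes the cycle $V_W\le\mathcal D_+\le\mathcal D_-\le V_W$ directly. Your route avoids re-solving the state equation. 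In the strong case you also never need the modified rule to be strongly admissible, since the restarted tail enters only through $V_W(s,\cdot)=V_S(s,\cdot)$.

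The price is that the restart must be done with some care. Literally editing the survival path on $[0,s)$ so that its left limit at $s$ equals $I_s$ would reveal the event $\{I_{s-}=1,\ I_s=0\}$ strictly before $s$, and this could destroy the Brownian property of $W$ just before $s$. The clean version proceeds in three steps. First, pass to $\P^s$. Second, replace the coordinate $i_{s-}$ by $i_s$; the new canonical filtration on $\Om_s$ is generated by variables that were already measurable in the old one, because $I_s$ is $\Fc_s$-measurable, so the martingale problem of Lemma \ref{lem:projected_law_compactness} persists. Third, re-embed with an independent Brownian history before $s$, exactly as in the closedness part of that lemma.

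Finally, the detour in your last paragraph through approximation by strong rules and semicontinuity is unnecessary. The chain you end with is exactly what is needed.
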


\begin{proof}
We first prove \eqref{eq:dpp_weak_pre}.  For
$\P\in\Pc_W(t,\nu)$, the same full law belongs to
$\Pc_W(s,m_{s-}^\P)$ when regarded as a rule from time $s$ onward.  Therefore
\[
 J_t(\P)
 =R_{t,s}(\P)+J_s(\P)
 \le R_{t,s}(\P)+V_W(s,m_{s-}^\P).
\]
Taking the supremum gives the ``$\le$'' inequality in
\eqref{eq:dpp_weak_pre}.

For the reverse inequality, fix $\P\in\Pc_W(t,\nu)$ and $\eta>0$.
Choose an $\eta$-optimal rule
$\mathsf Q\in\Pc_W(s,m_{s-}^\P)$, so that
\[
 J_s(\mathsf Q)\ge V_W(s,m_{s-}^\P)-\eta.
\]
By Lemma \ref{lem:weak_concatenation},
\[
 R_{t,s}(\P)+V_W(s,m_{s-}^\P)-\eta
 \le J_t(\P\otimes_s\mathsf Q)
 \le V_W(t,\nu).
\]
Take the supremum over $\P$ and then let $\eta\downarrow0$.

For \eqref{eq:dpp_weak_post}, note that for every
$\P\in\Pc_W(t,\nu)$, the relation
       $m_s^\P\preceq m_{s-}^\P$
is witnessed by the coupling $\Lc^\P(X_s,I_{s-},I_s)$.  Lemma
\ref{lem:instantaneous_stopping_monotonicity} therefore implies that the
right-hand side of \eqref{eq:dpp_weak_pre} is at least as large as the
right-hand side of \eqref{eq:dpp_weak_post}.

For the opposite inequality, fix $\P\in\Pc_W(t,\nu)$.  On the original
probability space, set
\[
 \bar I_r:=I_r,\quad r<s,
 \qquad
 \bar I_r:=I_{s-},\quad r\in[s,T].
\]
Keep $\bar X=X$ before $s$, and on $[s,T]$ let $\bar X$ be the unique
McKean--Vlasov solution driven by the same Brownian motion and controlled
by $\bar I$.  Then
$\bar\P:=\Lc^\P(\bar X,W,\bar I)$ belongs to
$\Pc_W(t,\nu)$,
\[
 R_{t,s}(\bar\P)=R_{t,s}(\P),
 \qquad
 m_s^{\bar\P}=m_{s-}^\P.
\]
Any difference at the single time $s$ does not affect the Lebesgue integral.
Hence every candidate value in the supremum in \eqref{eq:dpp_weak_pre} also
appears in the supremum in
\eqref{eq:dpp_weak_post}, proving equality.

It remains to prove the strong formulation.  For
$\P\in\Pc_S(t,\nu)$, its continuation is, in particular, a weak rule starting
from
$m_{s-}^\P$, and therefore
\[
 J_t(\P)
 \le R_{t,s}(\P)+V_W(s,m_{s-}^\P)
 =R_{t,s}(\P)+V_S(s,m_{s-}^\P).
\]
Taking the supremum gives
$V_S(t,\nu)$ no larger than the right-hand side of
\eqref{eq:dpp_strong_pre}.  The reverse inequality follows from
$\Pc_S(t,\nu)\subset\Pc_W(t,\nu)$, the weak DPP, and
$V_S=V_W$.  This proves \eqref{eq:dpp_strong_pre}.  The no-jump
construction above preserves strong admissibility because $I_{s-}$ is
$\Gc_{s-}^t$-measurable.  Moreover, the monotonicity of the common value
$V_S=V_W$ follows from Lemma
\ref{lem:instantaneous_stopping_monotonicity}.  Repeating the argument for the
post-jump law gives \eqref{eq:dpp_strong_post}.
\end{proof}

\section{Regularity of the value function}

By Theorem \ref{thm:equivalence_value_function}, we may write
\[
    V(t,m):=V_W(t,m)=V_S(t,m),
    \qquad (t,m)\in[0,T]\times\Pc_2(\Sbf).
\]
Set
   $p(m):=m(\R^n\times\{1\})$.
The map $p$ is continuous on $\Pc_2(\Sbf)$, and in fact
    $|p(m)-p(m')|\le \Wc_2(m,m'),$
because $(x,i)\mapsto i$ is $1$-Lipschitz on $\Sbf$.

The value can nevertheless fail to be continuous at the threshold.  To see
this, take
$b=\sigma=0$ and $f\equiv1$, fix $x_0\in\R^n$, and let
\[
    m=\alpha\delta_{(x_0,1)}+(1-\alpha)\delta_{(x_0,0)},
    \qquad
    m_\eps=(\alpha-\eps)\delta_{(x_0,1)}
          +(1-\alpha+\eps)\delta_{(x_0,0)}.
\]
Then
\[
    \Wc_2(m,m_\eps)^2=\eps,
    \qquad
    V(t,m)=\alpha(T-t),
    \qquad
    V(t,m_\eps)=0,
\]
for every $t<T$.

\medskip

The following Lemma is one of the most important results in this work.
\begin{lemma}[Stability with respect to the initial law]
\label{lem:value_initial_stability}
Suppose Assumption \ref{ass:coefficient_reward} holds.  Assume in addition
that, for every $r\in[0,T]$, the map
    $(x,m)\longmapsto f(r,x,m)$
is continuous on $\R^n\times\Pc_2(\Sbf)$.  Let $t_k\in[0,T]$ and let
$\nu_k,\mu_k,\nu\in\Pc_2(\Sbf)$ satisfy
\[
    \Wc_2(\nu_k,\nu)+\Wc_2(\mu_k,\nu)\longrightarrow0,
    \qquad p(\nu)>\alpha.
\]
Then
\[
    |V(t_k,\nu_k)-V(t_k,\mu_k)|\longrightarrow0.
\]
\end{lemma}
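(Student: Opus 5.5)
By symmetry it suffices to prove $\liminf_k\big(V(t_k,\mu_k)-V(t_k,\nu_k)\big)\ge0$; the bound with $\nu_k,\mu_k$ exchanged then gives the claim. The plan is a transfer argument. Take a near-optimal constrained rule from $\nu_k$, couple the initial laws $\nu_k$ and $\mu_k$, run the same survival decisions from $\mu_k$, and then repair the (vanishing) violation of the constraint $\E[I_s]\in\{0\}\cup[\alpha,1]$ by modifying only a small fraction of survival paths.

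\textbf{Step 1 (near-optimal rule and coupling).} By Lemma \ref{lem:constraint_reduction} and Theorem \ref{thm:existence}, fix $\P_k\in\widetilde\Pc_W(t_k,\nu_k)$ with $J_{t_k}(\P_k)\ge V(t_k,\nu_k)-1/k$, so that $J_{t_k}(\P_k)=\mathcal F_{t_k}(\P_k^{t_k})$. Since $\Wc_2(\nu_k,\mu_k)\to0$, choose an optimal coupling $\gamma_k$ of $\nu_k$ and $\mu_k$. Using regular conditional probabilities, enlarge the probability space of $\P_k$ with $(X'_{t_k},I'_{t_k-})$ drawn conditionally on $(X_{t_k},I_{t_k-})$ from $\gamma_k$, independently of the Brownian increments after $t_k$ and of the survival decisions given the initial pair.

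\textbf{Step 2 (transfer the decisions).} Define $I'_s:=I'_{t_k-}I_s$ for $s\ge t_k$. This stops surviving $\mu_k$-particles whenever their $\nu_k$-partner stops. The extra mass $\{I'_{t_k-}=1,I_{t_k-}=0\}$ is stopped immediately at $t_k$. Its probability is at most $\E|I'_{t_k-}-I_{t_k-}|\le\Wc_2(\nu_k,\mu_k)^2$ by $\{0,1\}$-valuedness. Let $X'$ solve the McKean--Vlasov equation from $X'_{t_k}$ with the same $W$ and control $I'$. The Lipschitz and Gronwall estimates used in Lemma \ref{lem:approximate_by_discrete_stopping} then give
\[
\E\sup_s|X'_s-X_s|^2\le C\big(\Wc_2(\nu_k,\mu_k)^2+\E|I'_{t_k-}-I_{t_k-}|\big)\to0,
\]
uniformly in the choice of $\P_k$. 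Consequently $\sup_s|\E I'_s-\E I_s|\le\eps_k:=\Wc_2(\nu_k,\mu_k)^2\to0$ as well.

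\textbf{Step 3 (restore feasibility; the main obstacle).} Write $p_s:=\E I_s\in\{0\}\cup[\alpha,1]$. The new mass satisfies $p'_s\le p_s$ and $p'_s\ge p_s-\eps_k$, so it may dip into $(\alpha-\eps_k,\alpha)$, and an ordinary coupling cannot prevent this. I expect this step to be the hardest. The repair uses the strict margin $p(\nu)>\alpha$, so that $p(\mu_k)>\alpha+\delta$ for large $k$.

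\emph{Case 1.} On the time set where $p_s\ge\alpha+\eps_k$, no repair is needed.

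\emph{Case 2.} On the set where $p_s\in[\alpha,\alpha+\eps_k)$, we must keep extra particles alive. Let $\theta:=\inf\{s:p_s<\alpha+\eps_k\}$, which is deterministic. The particles of $\mu_k$ that are alive at $t_k$ but have been stopped by time $\theta$ form mass at least $\delta$ whenever $\theta$ is finite and $p_\theta>0$. Among particles stopped at times in $(\theta-h,\theta]$, select by independent randomization (via an auxiliary uniform variable, harmless in the weak formulation) a fraction carrying mass $\eps_k$. Delay their stopping until $\theta'$, the first time $p$ jumps to $0$, or stop all particles at the first time the repaired mass would fall below $\alpha$. Because $p$ is nonincreasing and takes values in $\{0\}\cup[\alpha,1]$, this cutoff occurs at essentially the same time as for $\P_k$. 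The alternative is to stop everyone at $\theta$; this loses the reward on an interval only when $p$ stays in $[\alpha,\alpha+\eps_k)$. To control this I would compare with $\P_k$ modified by the deterministic cutoff $\theta$ as well, and argue that the loss is $o(1)$ using uniform integrability.

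In either alternative, the modified set of paths has probability $O(\eps_k)$. The moment estimate (Lemma \ref{lem:moment_estimate}), uniform integrability of $\sup|X|^2$ over $\mathfrak K$-type families (as in Lemma \ref{lem:projected_law_compactness}), and the Gronwall stability then make the state and flow perturbations vanish.

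\textbf{Step 4 (conclude).} We obtain $\P'_k\in\widetilde\Pc_W(t_k,\mu_k)$ with $\Wc_{2,t_k}$-distance to $\P_k^{t_k}$, after time-shifting, tending to $0$ uniformly. Compactness of $\bigcup_k\mathfrak K(t_k,\nu_k)$, obtained via the same tightness argument as in Lemma \ref{lem:projected_law_compactness}, together with continuity of $f$, gives $|\mathcal F_{t_k}(\P_k'^{t_k})-\mathcal F_{t_k}(\P_k^{t_k})|\to0$, equivalently a direct estimate using the growth bound \eqref{eq:f_growth}. Hence $V(t_k,\mu_k)\ge V(t_k,\nu_k)-o(1)$, and symmetry finishes the proof.
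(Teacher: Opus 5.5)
Your overall route is the paper's. You couple the initial laws optimally, transfer the decisions by $I'_s=I'_{t_k-}I_s$, and repair the constraint by using an independent time-$t_k$ uniform to keep a vanishing fraction of $\mu_k$-particles alive until the deterministic time $\theta'$ at which $p$ vanishes; this is where $p(\nu)>\alpha$ enters. Gronwall, uniform integrability and continuity of $f$ then close the argument.

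The delay construction in Step 3 is correct, but it needs two repairs.
\begin{itemize}
\item The selection pool must be \emph{all} $\mu_k$-particles stopped in $[t_k,\theta]$. Your lower bound $p(\mu_k)-p'_\theta\ge p(\mu_k)-\alpha-\eps_k$ applies to that pool, whereas a window $(\theta-h,\theta]$ can carry mass below $\eps_k$. Take selection probability $r_k=\eps_k/(p(\mu_k)-p'_\theta)\to0$. Then the repaired mass is at least $p'_s\ge p_s-\eps_k\ge\alpha$ for $s<\theta$, equals $p'_s+\eps_k\ge p_s\ge\alpha$ on $[\theta,\theta')$, and is zero afterwards. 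No further cutoff is needed, so the clause about stopping at the first time the repaired mass falls below $\alpha$ is superfluous.
\item Discard the alternative of stopping everyone at $\theta$; it fails. If $\P_k$ holds its mass in $[\alpha,\alpha+\eps_k)$ on a long interval, the lost reward $\int_\theta^{\theta'}\E[f(s,X_s,\mu_s)I_s]\,ds$ is of order one. For instance, the closed-threshold optimizer in Remark \ref{rem:open_alpha_no_optimizer} keeps its mass exactly at $\alpha$, so there $\theta=t$ and the cut law earns nothing. The altered set also has probability $p_\theta\ge\alpha$, not $O(\eps_k)$. Comparing with $\P_k$ cut at $\theta$ does not help, because that cut law is not near-optimal.
\end{itemize}

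Two smaller points need attention.
\begin{itemize}
\item The Step 2 bound cannot be linear in $\E|I'_{t_k-}-I_{t_k-}|$. The term $\E[(1+\sup_s|X_s|^2)\mathbf 1_{\{I'\ne I\}}]$ requires uniform integrability of $\sup_s|X_s|^2$ along $k$. That comes from the $\Wc_2$-convergence of the initial laws, and the paper obtains it by truncating the initial state. The independently selected delayed particles, by contrast, contribute only $O(r_k)$.
\item In Step 4 the functional $\mathcal F_{t_k}$ changes with $k$ and the sets $\mathfrak K(t_k,\cdot)$ live on different path spaces. Use your direct estimate there, as the paper does via Scorza--Dragoni plus uniform integrability; pointwise-in-time uniform continuity on compacts plus dominated convergence also works.
\end{itemize}

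For comparison, the paper's repair is a leaner version of yours. With $\lambda_k=\delta_k/(p(\mu_k)-\alpha+\delta_k)$, it keeps \emph{every} initially alive $\mu_k$-particle alive until $\theta_k$ on $\{U_k\le\lambda_k\}$. This lifts the survival mass uniformly on $[t_k,\theta_k)$ and avoids the auxiliary time $\theta$ altogether.
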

{
\begin{remark}
   The proof of Lemma~\ref{lem:value_initial_stability} requires more than the standard stability argument for controlled McKean--Vlasov dynamics, due to the presence of the constraint. In particular, after coupling two nearby initial laws, an admissible control associated with one initial distribution cannot in general be transferred directly to the other, since the resulting law may fail to satisfy the constraint, even when the perturbation of the initial condition is arbitrarily small. The main point is therefore to construct, from a nearly optimal admissible law for \(\nu_k\), a nearby admissible law for \(\mu_k\). This is achieved by coupling the initial conditions and introducing a small randomized modification of the survival process which restores the constraint while affecting the original control only with vanishing probability.  
\end{remark}
}

\begin{proof}
It suffices to prove
\[
    \limsup_{k\to\infty}
    \big(V(t_k,\nu_k)-V(t_k,\mu_k)\big)\le0,
\]
since the reverse inequality follows by interchanging $\nu_k$ and $\mu_k$.
By Lemma \ref{lem:constraint_reduction}, choose
$\P_k\in\widetilde\Pc_W(t_k,\nu_k)$ such that
\[
    J_{t_k}(\P_k)\ge V(t_k,\nu_k)-\frac1k.
\]
Lemma \ref{lem:input_law_invariance} allows us to restart the system from
time $t_k$ on a stochastic basis carrying the same input law from $t_k$
onward.  Couple
$(X_{t_k},I_{t_k-})$ with a random variable
$(\widetilde X_{t_k},\widetilde I_{t_k-})$ of law $\mu_k$ so that
\[
\begin{split}
    \E\big[|X_{t_k}-\widetilde X_{t_k}|^2
          +|I_{t_k-}-\widetilde I_{t_k-}|^2\big]
    =\Wc_2(\nu_k,\mu_k)^2,
\end{split}
\]
and adjoin a uniform random variable $U_k$ that is independent of all existing
variables and measurable at time $t_k$.  The conditional randomization used for
the coupling and $U_k$ are independent of the Brownian increments after $t_k$,
so the enlargement preserves the Brownian property.

Set
\[
    \delta_k:=\E|I_{t_k-}-\widetilde I_{t_k-}|,
    \qquad
    \lambda_k:=\frac{\delta_k}{p(\mu_k)-\alpha+\delta_k}.
\]
For all sufficiently large $k$, $p(\mu_k)>\alpha$, and
\[
    \delta_k\le\Wc_2(\nu_k,\mu_k)^2,
    \qquad \lambda_k\longrightarrow0.
\]
Let $q_k(s):=\E[I_s]$.  Since $\P_k$ is constrained, $q_k$ is
nonincreasing and right-continuous, and
\[
    q_k(s)\in\{0\}\cup[\alpha,1],
    \qquad s<T.
\]
Indeed, if $q_k(s_0)\in(0,\alpha)$ for some $s_0<T$, right-continuity would
make $(\alpha-q_k)^+q_k$ strictly positive on an interval of positive
Lebesgue measure, contradicting the constraint.  Define the deterministic
time
\[
    \theta_k:=\inf\{s\in[t_k,T]:q_k(s)=0\},
\]
with the convention $\inf\varnothing=T+1$, and set
\[
    \widetilde I_s
    :=\widetilde I_{t_k-}
      \Big(I_s+(1-I_s)\mathbf 1_{\{U_k\le\lambda_k\}}
                         \mathbf 1_{\{s<\theta_k\}}\Big),
    \qquad s\in[t_k,T].
\]
This is an adapted survival process.  If
$j_k(s):=\E[\widetilde I_{t_k-}I_s]$, then
$|j_k(s)-q_k(s)|\le\delta_k$.  Hence, for $s<\theta_k$ and $s<T$,
\[
\begin{split}
    \E[\widetilde I_s]
      =(1-\lambda_k)j_k(s)+\lambda_k p(\mu_k)
      \ge \alpha-\delta_k
          +\lambda_k(p(\mu_k)-\alpha+\delta_k)
       =\alpha.
\end{split}
\]
For $s\ge\theta_k$, one has $I_s=0$ almost surely and
$\widetilde I_s=0$.  Values at the single terminal time $T$ do not affect the
constraint.  Consequently, $\widetilde I$ satisfies the constraint.
Moreover, if
\[
    e_k:=\P\big(I\ne\widetilde I\text{ on }[t_k,T]\big),
\]
then
\begin{equation}\label{eq:regularity_control_error}
    e_k\le \delta_k+\lambda_kp(\mu_k)\longrightarrow0.
\end{equation}

Let $\widetilde X$ solve
\[
    \widetilde X_s
    =\widetilde X_{t_k}
     +\int_{t_k}^s
        b(r,\widetilde X_r,\widetilde\mu_r)\widetilde I_r\,dr
     +\int_{t_k}^s
        \sigma(r,\widetilde X_r,\widetilde\mu_r)
        \widetilde I_r\,dW_r,
    \qquad
    \widetilde\mu_r:=\Lc(\widetilde X_r,\widetilde I_r).
\]
Adjoin an independent Brownian history before $t_k$ and take the past state
and survival paths to be constant.  Then the canonical law
\[
    \widetilde\P_k:=\Lc(\widetilde X,W,\widetilde I)
\]
belongs to $\widetilde\Pc_W(t_k,\mu_k)$.  The natural canonical filtration
is contained in the enlarged filtration, so Brownianity is preserved.

The coupled states satisfy
\begin{equation}\label{eq:regularity_state_error}
    \E\left[\sup_{s\in[t_k,T]}|X_s-\widetilde X_s|^2\right]
    \longrightarrow0.
\end{equation}
Since $\nu_k$ and $\mu_k$ converge in $\Wc_2$, the families
$\{|X_{t_k}|^2\}_{k\ge1}$ and
$\{|\widetilde X_{t_k}|^2\}_{k\ge1}$ are uniformly integrable.  The same is
true for
$\sup_{s\in[t_k,T]}|X_s|^2$ and
$\sup_{s\in[t_k,T]}|\widetilde X_s|^2$.  To verify this refinement of
Lemma \ref{lem:moment_estimate}, truncate the initial state at radius $R$
and solve the equation with the same Brownian motion and the same survival
process.  The Lipschitz estimate gives an $L^2$ error bounded by the
truncation error of the initial state, uniformly in $k$, while the
Burkholder--Davis--Gundy and Gronwall inequalities yield a uniform
fourth-moment estimate for bounded initial states.  Letting $R\to\infty$
proves uniform integrability.

The usual BDG estimate, together with the Lipschitz property of $b,\sigma$
and the coupling bound
\[
    \Wc_2(\Lc(X_s,I_s),\Lc(\widetilde X_s,\widetilde I_s))^2
    \le \E|X_s-\widetilde X_s|^2
       +\E|I_s-\widetilde I_s|
\]
yields
\[
\begin{split}
 \E\left[\sup_{u\in[t_k,s]}|X_u-\widetilde X_u|^2\right]
 &\le
 C\Wc_2(\nu_k,\mu_k)^2
 +C\int_{t_k}^s
       \E\left[\sup_{u\in[t_k,r]}|X_u-\widetilde X_u|^2\right]dr \\
 &\qquad
 +C\int_{t_k}^s
       \E\Big[(1+|\widetilde X_r|^2)
               |I_r-\widetilde I_r|\Big]dr
 +Ce_k.
\end{split}
\]
The last two terms converge to zero.  For the weighted term, for every
$R>0$ it is bounded by
\[
    C(1+R)Te_k
    +CT\E\left[
       \sup_{s\in[t_k,T]}|\widetilde X_s|^2
       \mathbf 1_{\{\sup_{s\in[t_k,T]}|\widetilde X_s|^2>R\}}
    \right].
\]
First let $k\to\infty$ and then $R\to\infty$. Gronwall's inequality
gives \eqref{eq:regularity_state_error}.  Together with
\eqref{eq:regularity_control_error} and the bound on the survival-path metric,
this also gives
\begin{equation}\label{eq:regularity_projected_law_error}
    \Wc_{2,t_k}(\P_k^{t_k},\widetilde\P_k^{t_k})\longrightarrow0.
\end{equation}

We now compare the rewards.  The family of marginal laws
$\Lc(X_s,I_s)$ and $\Lc(\widetilde X_s,\widetilde I_s)$ appearing above is
relatively compact in $\Pc_2(\Sbf)$, because the corresponding second
moments are uniformly integrable.  Let $\mathcal K$ be its compact closure.
Equations \eqref{eq:regularity_state_error} and
\eqref{eq:regularity_control_error} imply
\[
    \sup_{s\in[t_k,T]}
    \Wc_2(\Lc(X_s,I_s),
          \Lc(\widetilde X_s,\widetilde I_s))\longrightarrow0.
\]
Fix $R>0$ and $\eta>0$.  By Scorza--Dragoni's theorem, there is a compact
$E_\eta\subset[0,T]$ with Lebesgue measure at least $T-\eta$ such that $f$
is jointly continuous, hence uniformly continuous, on
$E_\eta\times\overline B_R\times\mathcal K$.  Uniform continuity on this set,
together with the preceding estimates, gives the required convergence.  On
$[0,T]\setminus E_\eta$, the quadratic
growth bound gives
a contribution bounded by $C\eta$ uniformly in $k$, while the contribution
of $\{|X_s|>R\}\cup\{|\widetilde X_s|>R\}$ tends to zero uniformly as
$R\to\infty$ by uniform integrability.  The quadratic growth of $f$ and this
uniform integrability therefore give
\[
\begin{split}
 &\int_{t_k}^T
   \E\Big[
      |f(s,X_s,\Lc(X_s,I_s))
       -f(s,\widetilde X_s,
             \Lc(\widetilde X_s,\widetilde I_s))|
   \Big]ds
   \longrightarrow0.
\end{split}
\]
Also,
\[
\begin{split}
 &\int_{t_k}^T
   \E\Big[
      |f(s,\widetilde X_s,
             \Lc(\widetilde X_s,\widetilde I_s))|
      |I_s-\widetilde I_s|
   \Big]ds
   \longrightarrow0,
\end{split}
\]
by the growth bound, uniform integrability, and
\eqref{eq:regularity_control_error}.  Since both laws satisfy the constraint,
Lemma \ref{lem:constraint_reduction} gives
\[
    |J_{t_k}(\P_k)-J_{t_k}(\widetilde\P_k)|\longrightarrow0.
\]
Therefore,
\[
    V(t_k,\nu_k)-V(t_k,\mu_k)
    \le \frac1k
       +J_{t_k}(\P_k)-J_{t_k}(\widetilde\P_k)
    \longrightarrow0.
\]
This proves the desired inequality, and the reverse inequality follows by
symmetry.
\end{proof}

\begin{theorem}[Regularity]
\label{thm:regularity}
Suppose Assumption \ref{ass:coefficient_reward} holds.  Assume in addition
that, for every $r\in[0,T]$, the map
    $(x,m)\longmapsto f(r,x,m)$
is continuous on $\R^n\times\Pc_2(\Sbf)$.

Then, with respect to the product of the Euclidean topology on $[0,T]$ and
the $\Wc_2$ topology, $V$ is continuous at every $(t,m)$ such that
\[
    m(\R^n \x \{1\})\ne\alpha.
\]
Moreover, $V$ is continuous at every $(T,m)$.
\end{theorem}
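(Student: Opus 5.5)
The plan is to split according to the position of $p(m)$ relative to $\alpha$, and to treat the terminal time separately. Throughout, $(t_k,m_k)\to(t,m)$ in $[0,T]\times\Pc_2(\Sbf)$. By Lipschitz continuity of $p$, eventually $p(m_k)$ lies strictly on the same side of $\alpha$ as $p(m)$.

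\emph{Terminal time and subthreshold case.} At $(T,m)$, Lemma \ref{lem:constraint_reduction}, the growth bound \eqref{eq:f_growth} and Lemma \ref{lem:moment_estimate} give
\[
|V(t_k,m_k)|\le C(T-t_k)\bigl(1+M_2(m_k)\bigr)\to0=V(T,m),
\]
because $M_2(m_k)$ stays bounded. If $p(m)<\alpha$, then for large $k$ every constrained rule from $(t_k,m_k)$ satisfies $\E[I_{t_k}]\le p(m_k)<\alpha$. By \eqref{eq:constraint_pointwise} this forces $\E[I_{t_k}]=0$, so $I\equiv0$ on $[t_k,T]$. Hence $V\equiv0$ on a neighbourhood of $(t,m)$, and $V$ is continuous there.

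\emph{Supercritical case $p(m)>\alpha$, $t<T$.} By Lemma \ref{lem:value_initial_stability} it suffices to compare $V(t_k,m_k)$ with $V(t,m)$ when the time argument moves. I would pass through an auxiliary law at time $t_k$ that is $\Wc_2$-close to $m$. Assume first $t_k\ge t$ and use the pre-jump DPP \eqref{eq:dpp_weak_pre} between $t$ and $t_k$.

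For the lower bound, take $\P\in\Pc_W(t,m)$ that never stops on $[t,t_k)$. For instance, freeze $I\equiv I_{t-}$ and let $X$ solve the equation; this is a strong rule. Then $m^\P_{t_k-}=\Lc(X_{t_k},I_{t-})$ and
\[
\Wc_2(m^\P_{t_k-},m)^2\le\E|X_{t_k}-X_t|^2\le C(t_k-t)\bigl(1+M_2(m)\bigr).
\]
Also $|R_{t,t_k}(\P)|\le C(t_k-t)$. The DPP gives $V(t,m)\ge -C(t_k-t)+V(t_k,m^\P_{t_k-})$. Lemma \ref{lem:value_initial_stability}, applied with $\nu_k=m^\P_{t_k-}$, $\mu_k=m_k$, turns this into $V(t,m)\ge V(t_k,m_k)-o(1)$.

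For the upper bound, pick $\P_k$ that is $1/k$-optimal in the DPP for $V(t,m)$. Again $R_{t,t_k}(\P_k)=O(t_k-t)$, uniformly in $k$, by the moment estimate. The coupling $\Lc^{\P_k}(X_{t_k},I_{t-},I_{t_k-})$ shows $m^{\P_k}_{t_k-}\preceq m'_k:=\Lc^{\P_k}(X_{t_k},I_{t-})$. Lemma \ref{lem:instantaneous_stopping_monotonicity} then gives
\[
V(t,m)\le V(t_k,m'_k)+O(t_k-t)+1/k.
\]
Since $\Wc_2(m'_k,m)^2\le C(t_k-t)$ uniformly in $k$ by Lemma \ref{lem:moment_estimate}, Lemma \ref{lem:value_initial_stability} yields $V(t_k,m'_k)-V(t_k,m_k)\to0$. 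The case $t_k<t$ is symmetric: apply the DPP on $[t_k,t]$ starting from $m_k$, and use the same two constructions with the roles of $(t,m)$ and $(t_k,m_k)$ exchanged. The intermediate laws $\Lc(X_t,I_{t_k-})$ then converge to $m$ because $m_k\to m$ and $t-t_k\to0$. A general sequence is handled by splitting it into these two subsequences.

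\emph{Main obstacle.} The delicate step is the upper bound. The near-optimal rules may stop mass during $[t,t_k)$, so $m^{\P_k}_{t_k-}$ need not be close to $m$, and its survival mass could even drop to $\alpha$, where Lemma \ref{lem:value_initial_stability} is unavailable. The monotonicity lemma is what removes this difficulty: it replaces $m^{\P_k}_{t_k-}$ by the no-stopping law $m'_k$, whose survival mass equals $p(m)>\alpha$ and whose state marginal is uniformly close to that of $m$. One must also check that Lemma \ref{lem:value_initial_stability} is invoked along genuine sequences. It is, since $\P_k$ is fixed for each $k$ and the bound on $\Wc_2(m'_k,m)$ is uniform in the rule.
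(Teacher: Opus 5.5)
Your proposal is correct and follows essentially the paper's argument. You use the DPP between the two times, the monotonicity lemma to replace $m^{\P}_{s-}$ by the no-stopping law $\Lc^{\P}(X_s,I_{t-})$ for the upper bound, a keep-alive rule for the lower bound, and Lemma~\ref{lem:value_initial_stability} to absorb the $\Wc_2$-perturbation. The only difference is organizational: you run the comparison directly along $(t_k,m_k)$, split into $t_k\ge t$ and $t_k<t$, whereas the paper first proves continuity in time at fixed $m$ and then concludes joint continuity with the triangle inequality.
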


\begin{remark}
    The lower semicontinuity of $f$ in Assumption
\ref{ass:coefficient_reward}\textnormal{(ii)} is not sufficient for the
continuity asserted in the theorem.  For example, assume $\alpha<1$, fix
$p\in(\alpha,1]$, take $b=\sigma=0$, and set
\[
    f(s,x,m):=\mathbf 1_{\{x\ne0\}}.
\]
This function is bounded and lower semicontinuous.  If
\[
    m=p\delta_{(0,1)}+(1-p)\delta_{(0,0)},
    \qquad
    m_k=p\delta_{(1/k,1)}+(1-p)\delta_{(0,0)},
\]
then $\Wc_2(m_k,m)=\sqrt p/k$, but
\[
    V(t,m)=0,
    \qquad
    V(t,m_k)=p(T-t),
    \qquad t<T.
\]
Thus the continuity assumption used in the existence result is also needed
for the regularity theorem.
\end{remark}
{
\begin{corollary}[One-sided continuity at the threshold]
\label{cor:threshold_continuity}
Suppose the assumptions of Theorem~\ref{thm:regularity} hold.  Fix
$t\in[0,T]$ and let $m_k,m\in\Pc_2(\Sbf)$ satisfy
$$
    \Wc_2(m_k,m)\longrightarrow0,
    \qquad
    p(m_k)\ge\alpha,
    \qquad
    p(m)=\alpha.
$$
Then
$
    V(t,m_k)\longrightarrow V(t,m).
$
\end{corollary}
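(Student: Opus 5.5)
The plan is to prove the two inequalities $\liminf_k V(t,m_k)\ge V(t,m)$ and $\limsup_k V(t,m_k)\le V(t,m)$ separately. The case $t=T$ is trivial, since both values vanish, so assume $t<T$. The two directions use different tools: the lower bound uses a monotone modification of the survival coupling, and the upper bound uses compactness as in Theorem~\ref{thm:existence}.

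\emph{Lower bound.} Fix $\eta>0$ and, by Lemma~\ref{lem:constraint_reduction}, choose $\P\in\widetilde\Pc_W(t,m)$ with $J_t(\P)\ge V(t,m)-\eta$. As in the proof of Lemma~\ref{lem:value_initial_stability}, restart from time $t$ and couple $(X_t,I_{t-})\sim m$ with $(\widetilde X_t,\widetilde I_{t-})\sim m_k$ $\Wc_2$-optimally. Set
\[
a_k:=\P(I_{t-}=1,\widetilde I_{t-}=0),\qquad b_k:=\P(I_{t-}=0,\widetilde I_{t-}=1).
\]
Then $a_k+b_k\le\Wc_2(m_k,m)^2\to0$ and $b_k-a_k=p(m_k)-p(m)\ge0$. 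Let $q(s):=\E[I_s]\in\{0\}\cup[\alpha,1]$ for $s<T$, and let $\theta$ be the first zero of $q$, as in that proof. Instead of the $U_k$-randomization, which needs $p(m_k)>\alpha$ strictly, define
\[
\widetilde I_s:=\widetilde I_{t-}\big(I_{t-}I_s+(1-I_{t-})\mathbf 1_{\{s<\theta\}}\big).
\]
In words, newly surviving particles are kept alive until $\theta$, and particles lost in the coupling are simply dropped. For $s<\theta$ this gives
\[
\E[\widetilde I_s]\ge q(s)-a_k+b_k\ge q(s)\ge\alpha,
\]
and for $s\ge\theta$ we have $\widetilde I_s=0$. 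Hence the constraint holds.

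The process $\widetilde I$ is adapted, and $\P(I\ne\widetilde I)\le a_k+b_k\to0$. Solving the McKean--Vlasov equation with $\widetilde I$ from $\widetilde X_t$ gives $\widetilde\P_k\in\widetilde\Pc_W(t,m_k)$. The state and reward estimates of Lemma~\ref{lem:value_initial_stability} then apply verbatim: BDG and Gronwall, uniform integrability from $\Wc_2$-convergence, and Scorza--Dragoni together with continuity of $f$. They give $J_t(\widetilde\P_k)\to J_t(\P)$, so $\liminf_k V(t,m_k)\ge V(t,m)-\eta$.

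\emph{Upper bound.} The symmetric transfer from $m_k$ to $m$ can fail, because it removes surviving mass and may push the survival mass below $\alpha$. Compactness is used instead. By Theorem~\ref{thm:existence}(ii), take optimal $\P_k\in\widetilde\Pc_W(t,m_k)$, and pass to a subsequence realizing the $\limsup$. I would show that $\bigcup_k\mathfrak K(t,m_k)$ is relatively compact in $\Wc_{2,t}$. This is the tightness and uniform-integrability part of the proof of Lemma~\ref{lem:projected_law_compactness}, with the truncation estimate \eqref{eq:truncation_estimate_compactness} now made uniform in $k$ using uniform integrability of $|x|^2$ under the convergent family $m_k$.

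Extract a limit $\P_k^t\to Q$. The closedness argument of that lemma applies unchanged, since the initial marginal passes to the limit and equals $m$, so $Q\in\mathfrak K(t,m)$. Continuity of $H_t$ gives $H_t(Q)=0$. Continuity of $\mathcal F_t$, obtained from Lemma~\ref{lem:reward_lsc} applied to $\pm f$, holds on all of $\Pc_2(\Om_t)$ along converging sequences. Combining these with \eqref{eq:objective_on_constrained_set} yields
\[
\limsup_k V(t,m_k)=\lim_k\mathcal F_t(\P_k^t)=\mathcal F_t(Q)\le V(t,m).
\]

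The main obstacle I expect is the uniform relative compactness over varying initial laws in the upper bound. The lower-bound construction is elementary once one sees that $p(m_k)\ge p(m)$ permits a one-sided correction, with no added randomization.
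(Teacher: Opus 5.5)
Your proof is correct, but it takes a different route from the paper's.

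\emph{The paper's route.} The paper exploits the rigidity at the threshold in both directions. Since $p(m)=\alpha$, any rule in $\widetilde\Pc_W(t,m)$ has survival mass in $\{0,\alpha\}$, so it equals $I_{t-}\mathbf 1_{\{s<\theta\}}$ for a deterministic $\theta$. Conversely, a near-optimal rule from $m_k$ differs from the deterministic cutoff $I_{t-}\mathbf 1_{[t,\theta_k)}$ only on an event of probability at most $p(m_k)-\alpha$. Both inequalities therefore reduce to transferring a deterministic-cutoff rule across the coupled initial laws. Such a rule is admissible on either side, and the stability estimates of Lemma~\ref{lem:value_initial_stability} then apply. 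In particular, the paper's upper bound is also a direct transfer: the obstruction you cite for the symmetric transfer is sidestepped by this structural observation, not by compactness.

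\emph{Your route.} Your lower bound uses a one-sided transfer that never uses the rigidity and works whenever $p(m_k)\ge p(m)$. Your check $\E[\widetilde I_s]\ge q(s)+p(m_k)-p(m)$ for $s<\theta$ is right. Your upper bound instead combines three ingredients: existence of optimizers, relative compactness of $\bigcup_k\mathfrak K(t,m_k)$, and the closed graph of $m\mapsto\{Q\in\mathfrak K(t,m):H_t(Q)=0\}$. It needs the uniform version of Lemma~\ref{lem:projected_law_compactness} that you sketch. That means uniform integrability of $|x|^2$ under $(m_k)$ in the truncation estimate. It also means tightness of the $X_t$-marginals, which now vary but converge. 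This is routine, but it should be written out.

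\emph{What each buys.} Your route yields more than the corollary. The compactness argument never uses $p(m_k)\ge\alpha$, so it shows that $V(t,\cdot)$ is upper semicontinuous at every $m$. Combined with your lower bound, you get continuity along any sequence with $p(m_k)\ge p(m)$, at any $m$. This is consistent with the downward jump in the paper's counterexample. The paper's argument is more elementary and self-contained, using only coupling and Gronwall estimates. Yours relies on the martingale-problem closedness and the existence theorem, in exchange for that stronger by-product.
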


\begin{proof}
The case $t=T$ follows from Theorem~\ref{thm:regularity}, so we only consider the case $t<T$. By continuity of $p$, one has $p(m_k)\to\alpha$.

We first prove the upper bound.  Choose
$\P_k\in\widetilde\Pc_W(t,m_k)$ such that
$$
    J_t(\P_k)\ge V(t,m_k)-\frac1k,
$$
and set
$
    \theta_k:=\inf\{s\in[t,T]:\E^{\P_k}[I_s]=0\},
$
with the convention $\inf\varnothing=T+1$.  As in the proof of
Lemma~\ref{lem:value_initial_stability},
$$
    \E^{\P_k}[I_s]\in\{0\}\cup[\alpha,1],
    \qquad s<T.
$$
Since $I$ is nonincreasing, it follows that, up to the value at the single
time $T$,
$$
    \P_k\Big(
       I\ne I_{t-}\mathbf 1_{[t,\theta_k)}
       \text{ on }[t,T)
    \Big)
    \le p(m_k)-\alpha.
$$
Couple $(X_t,I_{t-})$ with
$(\widetilde X_t,\widetilde I_{t-})$ of law $m$ so that the corresponding
$L^2$ distance is $\Wc_2(m_k,m)^2$, and define
$$
    \widetilde I_s
       :=\widetilde I_{t-}\mathbf 1_{\{s<\theta_k\}},
    \qquad s\in[t,T].
$$
Since $p(m)=\alpha$, the process $\widetilde I$ satisfies the constraint.
Moreover,
$$
    \P\big(I\ne\widetilde I\text{ on }[t,T)\big)
    \le
    \E|I_{t-}-\widetilde I_{t-}|+p(m_k)-\alpha
    \longrightarrow0.
$$
Let $\widetilde X$ solve the corresponding state equation and let
$\widetilde\P_k$ denote the resulting canonical law.  The estimates used in
the proof of Lemma~\ref{lem:value_initial_stability}, from
\eqref{eq:regularity_state_error} onward, yield
$$
    |J_t(\P_k)-J_t(\widetilde\P_k)|\longrightarrow0.
$$
Since $\widetilde\P_k\in\widetilde\Pc_W(t,m)$,
$$
    \limsup_{k\to\infty}V(t,m_k)\le V(t,m).
$$
For the reverse inequality, fix $\eps>0$ and choose
$\P\in\widetilde\Pc_W(t,m)$ such that
$$
    J_t(\P)\ge V(t,m)-\eps.
$$
Since $\E^\P[I_s]\le p(m)=\alpha$ and
$\E^\P[I_s]\in{0}\cup[\alpha,1]$ for $s<T$, one has
$\E^\P[I_s]\in{0,\alpha}$.  Hence, for some deterministic
$\theta\in[t,T+1]$,
$$
    I_s=I_{t-}\mathbf 1_{\{s<\theta\}},
    \qquad s<T,
    \quad \P\text{-a.s.}
$$
Couple the initial law $m$ with $m_k$ and, starting from $m_k$, use the
survival process
$$
    I_s^k:=I_{t-}^k\mathbf 1_{\{s<\theta\}}.
$$
This control is admissible, since its survival mass equals $p(m_k)\ge\alpha$
before $\theta$ and $0$ afterwards.  Applying again the stability estimates
from Lemma~\ref{lem:value_initial_stability} gives, for the corresponding
law $\P_k$,
$$
    J_t(\P_k)\longrightarrow J_t(\P).
$$
Therefore
$$
    \liminf_{k\to\infty}V(t,m_k)
       \ge J_t(\P)
       \ge V(t,m)-\eps.
$$
Letting $\eps\downarrow0$ proves the result.
\end{proof}



}
\begin{proof}
We first treat the case $p(m)<\alpha$.  For every admissible law $\P$,
\[
    \E^\P[I_s]\le\E^\P[I_{t-}]=p(m)<\alpha,
    \qquad s\in[t,T].
\]
Hence the threshold factor in the objective is identically zero and
$V(t,m)=0$.  Since $p$ is continuous, the same conclusion holds throughout a
neighborhood of $m$.  Thus $V$ is jointly continuous at every point with
$p(m)<\alpha$.

Now fix $m$ with $p(m)>\alpha$.  Lemma
\ref{lem:value_initial_stability} gives continuity with respect to the
initial law, uniformly along arbitrary sequences of starting times.  We need
only establish continuity in time at fixed $m$.

Let $t<s$.  For $\P\in\Pc_W(t,m)$ define
   $\bar m_s^\P:=\Lc^\P(X_s,I_{t-})$.
The moment estimate and the state equation imply, uniformly in
$\P\in\Pc_W(t,m)$,
\begin{equation}\label{eq:regularity_short_reward}
    |R_{t,s}(\P)|
    \le C(1+M_2(m))(s-t)
\end{equation}
and
\begin{equation}\label{eq:regularity_short_state}
    \Wc_2(\bar m_s^\P,m)^2
    \le \E^\P|X_s-X_t|^2
    \le C(1+M_2(m))(s-t).
\end{equation}
Moreover,
    $m_{s-}^\P\preceq \bar m_s^\P,$
because $I_{s-}\le I_{t-}$ pathwise.  The monotonicity lemma and the weak
DPP therefore give
\[
\begin{split}
    V(t,m)
      &\le C(1+M_2(m))(s-t)
          +\sup_{\P\in\Pc_W(t,m)}V(s,\bar m_s^\P).
\end{split}
\]
By \eqref{eq:regularity_short_state} and Lemma
\ref{lem:value_initial_stability},
\[
    \sup_{\P\in\Pc_W(t,m)}
       |V(s,\bar m_s^\P)-V(s,m)|\longrightarrow0
    \qquad\text{as }s\downarrow t.
\]
Indeed, otherwise one could choose a contradicting sequence of laws and
apply the lemma.  Consequently,
\begin{equation}\label{eq:regularity_time_upper}
    V(t,m)\le V(s,m)+o(1)
    \qquad\text{as }s\downarrow t.
\end{equation}

For the opposite inequality, take the admissible rule that keeps every
initially surviving particle alive on $[t,s)$ and stops it at $s$, namely
$I_r=I_{t-}\mathbf 1_{\{r<s\}}$.  Denote its law by $\P^0$.  Then
$m_{s-}^{\P^0}=\bar m_s^{\P^0}$, and the DPP, together with
\eqref{eq:regularity_short_reward}--\eqref{eq:regularity_short_state}
and Lemma
\ref{lem:value_initial_stability}, yields
\begin{equation}\label{eq:regularity_time_lower}
\begin{split}
    V(t,m)
      &\ge R_{t,s}(\P^0)+V(s,m_{s-}^{\P^0})
       \ge V(s,m)-o(1).
\end{split}
\end{equation}
Thus $V(s,m)\to V(t,m)$ as $s\downarrow t$.  The same estimates, with the
initial time increasing to $t$, give left-continuity at every $t>0$.

If $(t_k,m_k)\to(t,m)$ and $p(m)>\alpha$, then
\[
\begin{split}
    |V(t_k,m_k)-V(t,m)|
    \le |V(t_k,m_k)-V(t_k,m)|
       +|V(t_k,m)-V(t,m)|,
\end{split}
\]
and the two terms converge to zero by Lemma
\ref{lem:value_initial_stability} and the continuity in time just proved.
This proves joint continuity when $p(m)>\alpha$.

Finally, $V(T,m)=0$ for every $m$.  Because stopping all particles immediately
yields zero, $V\ge0$; the growth estimate and Lemma \ref{lem:moment_estimate}
give
\[
    0\le V(t,m)\le C(T-t)(1+M_2(m)).
\]
Hence $V(t_k,m_k)\to0=V(T,m)$ whenever
$(t_k,m_k)\to(T,m)$, including the threshold case $p(m)=\alpha$.
\end{proof}

\section{Limit theory}

We work throughout at the initial time $0$ and identify each admissible law
with its image on $\Omega_0$.  In addition to Assumption
\ref{ass:coefficient_reward}, suppose that, for every $r\in[0,T]$, the map
$(x,m)\mapsto f(r,x,m)$ is continuous.  We take the initial pairs
$\{(X^i_0,I^i_{0-})\}_{i\ge1}$ to be independent, the Brownian motions
$\{W^i\}_{i\ge1}$ to be independent, and the two families to be mutually
independent.  Write
\[
    \Lc^{\P^0}(X^i_0,I^i_{0-})=\nu_i\in\Pc_2(\Sbf).
\]
For $N\ge1$, let $\mathbb F^N=(\Fc_s^N)_{s\in[0,T]}$ be the augmented
filtration generated by
$\{(X^i_0,I^i_{0-},W^i):1\le i\le N\}$.

An admissible finite-population stopping strategy is a family
$\Ic^N=(I^{1,N},\ldots,I^{N,N})$ such that each $I^{i,N}$ is
$\mathbb F^N$-adapted, has paths in $\I([0,T])$, and satisfies
$I^{i,N}_{0-}=I^i_{0-}$.  Given $\Ic^N$, let
$\{X^{i,N}\}_{1\le i\le N}$ be the unique strong solution of
\begin{equation}\label{eq:finite_population_sde}
\begin{split}
    X^{i,N}_t
    =X^i_0
       +\int_0^t b(r,X^{i,N}_r,\mu^N_r)I^{i,N}_r\,dr
       +\int_0^t \sigma(r,X^{i,N}_r,\mu^N_r)I^{i,N}_r\,dW^i_r,
      \qquad 0\le t\le T,
\end{split}
\end{equation}
where
\begin{equation}\label{eq:finite_population_empirical_state}
    \mu^N_r:=\frac1N\sum_{j=1}^N
                 \delta_{(X^{j,N}_r,I^{j,N}_r)},
    \qquad
    q^N_r:=\frac1N\sum_{j=1}^N I^{j,N}_r.
\end{equation}
The finite-population objective and value are
\begin{align}
    J_N(\Ic^N)
    &:=\frac1N\sum_{i=1}^N\E^{\P^0}\!\left[
       \int_0^T f(r,X^{i,N}_r,\mu^N_r)I^{i,N}_r
       \mathbf 1_{[\alpha,1]}(q^N_r)\,dr\right],
       \label{eq:finite_population_objective}\\
    V^N(\nu_1,\ldots,\nu_N)
    &:=\sup_{\Ic^N}J_N(\Ic^N).
       \label{eq:finite_population_value}
\end{align}
The empirical measure in \eqref{eq:finite_population_sde} records only the
state and survival coordinates; it does not include the Brownian path.

\begin{lemma}\label{lem:finite_population_cutoff}
For every admissible $\Ic^N$, there exists another admissible strategy
$\widetilde{\Ic}^N$ such that
\begin{equation}\label{eq:finite_constraint_pathwise}
    \frac1N\sum_{i=1}^N\widetilde I^{i,N}_r
    \in\{0\}\cup[\alpha,1],
    \qquad 0\le r<T,
\end{equation}
and
\begin{equation}\label{eq:finite_cutoff_payoff_identity}
    J_N(\Ic^N)
    =\frac1N\sum_{i=1}^N\E^{\P^0}\!\left[
       \int_0^T f(r,\widetilde X^{i,N}_r,\widetilde\mu^N_r)
       \widetilde I^{i,N}_r\,dr\right],
\end{equation}
where $\widetilde X^{i,N}$ and $\widetilde\mu^N$ are generated by
$\widetilde{\Ic}^N$.
\end{lemma}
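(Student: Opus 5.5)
The plan is to mimic the proof of Lemma~\ref{lem:constraint_reduction}, with the deterministic cutoff replaced by the random empirical shutdown time. Given $\Ic^N$, define
\[
    \bar\tau^N:=\inf\{r\in[0,T]:q^N_r<\alpha\},\qquad \inf\varnothing:=T+1,
\]
and set
\[
    \widetilde I^{i,N}_{0-}:=I^i_{0-},\qquad \widetilde I^{i,N}_r:=I^{i,N}_r\mathbf 1_{\{r<\bar\tau^N\}},\qquad r\in[0,T].
\]
Since each $I^{j,N}$ is $\mathbb F^N$-adapted and càdlàg, $q^N$ is an adapted càdlàg nonincreasing process. Hence $\bar\tau^N$ is an $\mathbb F^N$-stopping time, because the set $\{q^N<\alpha\}$ is an interval $[\bar\tau^N,T]$ (or $(\bar\tau^N,T]$). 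By right-continuity, $q^N_{\bar\tau^N}<\alpha$ whenever $\bar\tau^N\le T$, so the interval is in fact closed at $\bar\tau^N$. Each $\widetilde I^{i,N}$ is therefore adapted, $\{0,1\}$-valued, nonincreasing and càdlàg, with the correct value at $0-$. This makes $\widetilde\Ic^N$ admissible, and it is the step I would check most carefully.

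The constraint \eqref{eq:finite_constraint_pathwise} then holds pathwise. For $r<\bar\tau^N$ the empirical survival mass equals $q^N_r\ge\alpha$, and for $r\ge\bar\tau^N$ it equals $0$.

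For the payoff identity, let $\widetilde X^{i,N}$ solve \eqref{eq:finite_population_sde} with $\widetilde\Ic^N$. The coefficient multipliers of the two systems coincide on $[0,\bar\tau^N)$, and so do the empirical measures as long as the states coincide. Pathwise uniqueness for the finite-dimensional Lipschitz system, applied to the stopped processes up to the stopping time $\bar\tau^N$ (say by a Gronwall estimate on $\E\sup_{u\le r\wedge\bar\tau^N}\sum_i|X^{i,N}_u-\widetilde X^{i,N}_u|^2$), gives $\widetilde X^{i,N}=X^{i,N}$ and $\widetilde\mu^N=\mu^N$ on $[0,\bar\tau^N]$. We can include $\bar\tau^N$ itself because the states are continuous.

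Now compare integrands. On $\{r<\bar\tau^N\}$ we have $\mathbf 1_{[\alpha,1]}(q^N_r)=1$ and all quantities agree, so the integrands coincide. On $\{r\ge\bar\tau^N\}$ we have $q^N_r<\alpha$, so the original integrand vanishes, while $\widetilde I^{i,N}_r=0$ kills the new one. Integrating and averaging over $i$ gives \eqref{eq:finite_cutoff_payoff_identity}; integrability follows from the growth bound \eqref{eq:f_growth} and standard moment estimates.

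The main obstacle is modest: verifying that $\bar\tau^N$ is a genuine stopping time and that the localized uniqueness argument is rigorous. Both are routine once the right-continuity of $q^N$ is noted.
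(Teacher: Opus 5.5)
Your proposal is correct and follows the paper's proof: you cut off all particles at the first time the empirical survival mass drops strictly below $\alpha$, obtain admissibility from adaptedness, monotonicity and right-continuity of $q^N$, use pathwise uniqueness before the cutoff, and note that both integrands vanish afterward. There are two harmless imprecisions. First, $q^N_{\bar\tau^N}<\alpha$ needs right-continuity together with the fact that $q^N$ takes values in the finite set $\{0,1/N,\dots,1\}$; in any case a single time is Lebesgue-null. Second, at $r=\bar\tau^N$ only the states agree, not the empirical measures, which also record the survival coordinates; this does not matter because both integrands vanish there.
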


\begin{proof}
Let
\[
    \tau_N:=\inf\{r\in[0,T]:q^N_r<\alpha\},
\]
with $\inf\emptyset=T+1$, and set
\[
    \widetilde I^{i,N}_{0-}:=I^i_{0-},
    \qquad
    \widetilde I^{i,N}_r:=I^{i,N}_r\mathbf 1_{\{r<\tau_N\}},
    \qquad 0\le r\le T.
\]
The strict inequality in the definition of $\tau_N$ is essential: if the
empirical survival mass is exactly $\alpha$, the indicator in
\eqref{eq:finite_population_objective} is still equal to one.
Because $q^N$ is adapted, right-continuous, and non-increasing, $\tau_N$ is an
$\mathbb F^N$-stopping time and $\widetilde{\Ic}^N$ is admissible.  Before
$\tau_N$ the original and modified systems coincide by pathwise uniqueness.
From $\tau_N$ onward, the original empirical survival mass is below $\alpha$
and remains there, while every modified particle is stopped.  The two
payoffs therefore agree, and \eqref{eq:finite_constraint_pathwise} follows.
The value at the single time $T$ is irrelevant for both time integrals.
\end{proof}

For a constrained finite-population strategy, define the random empirical path
law
\begin{equation}\label{eq:random_empirical_path_law}
    \mathbf P^N
    :=\frac1N\sum_{i=1}^N
          \delta_{(X^{i,N},W^i,I^{i,N})}
    \in\Pc_2(\Omega_0).
\end{equation}
Then, pathwise,
\begin{equation}\label{eq:finite_objective_as_random_functional}
    \frac1N\sum_{i=1}^N
       \int_0^T f(r,X^{i,N}_r,\mu^N_r)I^{i,N}_r\,dr
    =\mathcal F_0(\mathbf P^N).
\end{equation}
Here $\mathbf P^N$ is the random empirical measure rather than its
expectation.

\begin{lemma}[Wasserstein law of large numbers]\label{lem:triangular_wasserstein_lln}
Let $(E,d_E)$ be Polish, let $Y^{1,N},\ldots,Y^{N,N}$ be independent
$E$-valued random variables, and assume that, for some $\delta>0$ and
$e_0\in E$,
\[
    \sup_{N\ge1}\frac1N\sum_{i=1}^N
      \E[d_E(Y^{i,N},e_0)^{2+\delta}]<\infty.
\]
If
\[
    \frac1N\sum_{i=1}^N\Lc(Y^{i,N})
    \longrightarrow Q
    \quad\hbox{in }\Wc_2,
\]
then
\[
    \E\!\left[
       \Wc_2^2\!\left(\frac1N\sum_{i=1}^N\delta_{Y^{i,N}},Q\right)
    \right]\longrightarrow0.
\]
\end{lemma}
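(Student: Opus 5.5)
Write $\bar Q_N:=\frac1N\sum_{i=1}^N\Lc(Y^{i,N})$ and $\mathbf Q_N:=\frac1N\sum_{i=1}^N\delta_{Y^{i,N}}$. The plan is to split the claim into weak convergence in probability plus uniform integrability of the second moments. Recall that $\Wc_2(\rho_k,\rho)\to0$ if and only if $\rho_k\to\rho$ weakly and $\int d_E(e,e_0)^2\,d\rho_k\to\int d_E(e,e_0)^2\,d\rho$. By the triangle inequality and $\Wc_2(\bar Q_N,Q)\to0$, it suffices to show $\E[\Wc_2^2(\mathbf Q_N,Q)]\to0$ directly. I will do this via a Skorokhod-free truncation argument.

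\textbf{Step 1 (weak convergence in probability).} Fix a countable convergence-determining family $\{\varphi_k\}\subset C_b(E)$, for instance bounded Lipschitz functions, which exists since $E$ is Polish. For each $k$, independence gives
\[
\E\Big|\int\varphi_k\,d\mathbf Q_N-\int\varphi_k\,d\bar Q_N\Big|^2\le\|\varphi_k\|_\infty^2/N\to0 .
\]
Since $\bar Q_N\to Q$ weakly, $\int\varphi_k\,d\mathbf Q_N\to\int\varphi_k\,dQ$ in probability for every $k$. A diagonal subsequence argument, together with a bounded metric for weak convergence such as $d_w(\rho,\rho')=\sum_k2^{-k}\min(1,|\int\varphi_k\,d(\rho-\rho')|)$, then yields $d_w(\mathbf Q_N,Q)\to0$ in probability.

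\textbf{Step 2 (second moments).} Let $g(e):=d_E(e,e_0)^2$. Split $g=g\wedge R+(g-R)^+$. The bounded continuous part $g\wedge R$ is handled as in Step 1. For the tail,
\[
\E\int(g-R)^+\,d\mathbf Q_N=\int(g-R)^+\,d\bar Q_N\le R^{-\delta/2}\sup_N\frac1N\sum_i\E[g(Y^{i,N})^{1+\delta/2}],
\]
uniformly in $N$. Also $\int(g-R)^+\,dQ\to0$ as $R\to\infty$. Hence $\int g\,d\mathbf Q_N\to\int g\,dQ$ in $L^1$.

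\textbf{Step 3 (conclusion, the main obstacle).} Steps 1 and 2 give $\Wc_2(\mathbf Q_N,Q)\to0$ in probability: along any subsequence there is a further a.s.-convergent subsequence for both weak convergence and the second moment. It remains to upgrade to convergence of $\E[\Wc_2^2]$, and this needs uniform integrability of $\Wc_2^2(\mathbf Q_N,Q)$. I would use
\[
\Wc_2^2(\mathbf Q_N,Q)\le 2\int g\,d\mathbf Q_N+2\int g\,dQ
\]
(couple through $\delta_{e_0}$). The family $\int g\,d\mathbf Q_N$ is uniformly integrable because it converges in $L^1$ by Step 2; alternatively one can bound its $(1+\delta/2)$-moment by Jensen. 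Vitali's theorem then finishes the proof.

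The delicate point is Step 1 in a general Polish space with a triangular array and nonidentical laws. Handling it via a countable determining class and variance bounds avoids any tightness issue, since the limit $Q$ is fixed and $\bar Q_N$ already converges.
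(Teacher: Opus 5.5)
Your proof is correct, but it takes a different route from the paper's. The paper argues by quantization. It compares the empirical measure with $\bar Q_N$ and uses the $\Wc_2$-convergence of $\bar Q_N$ together with the moment bound to choose a compact $K$ whose complement carries at most $\eta$ of the second moment uniformly in $N$. It then maps each cell of a finite partition of $K$ into cells of diameter at most $\eta$ to a representative point, and sends $K^c$ to $e_0$, at transport cost $C\eta$. On the resulting finite state space it bounds $\Wc_2^2$ by a constant times the total variation distance between empirical frequencies and their means, which is $O(N^{-1/2})$ by independence. You instead argue by soft convergence: independence gives $O(1/N)$ variance for every bounded linear statistic, and a countable convergence-determining class turns this into weak convergence in probability. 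The split $g=g\wedge R+(g-R)^+$ gives $L^1$-convergence of second moments. The characterization of $\Wc_2$-convergence as weak convergence plus convergence of second moments, applied along sub-subsequences, then gives $\Wc_2(\mathbf Q_N,Q)\to0$ in probability. Vitali finishes, using the domination $\Wc_2^2(\mathbf Q_N,Q)\le 2\int g\,d\mathbf Q_N+2\int g\,dQ$. The paper's route is more constructive and exhibits an explicit $N^{-1/2}$ rate on each fixed quantization, at the cost of selecting compact sets and Borel partitions. Yours avoids all tightness and quantization machinery and uses only standard facts: a countable convergence-determining family on a Polish space, the characterization of $\Wc_2$-convergence, and Vitali's theorem. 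It also shows that the $(2+\delta)$-moment hypothesis is used only for uniform integrability. In Step 2 you could replace your moment bound on $\int(g-R)^+\,d\bar Q_N$ by the identity $\limsup_N\int(g-R)^+\,d\bar Q_N=\int(g-R)^+\,dQ$, which follows from $\Wc_2(\bar Q_N,Q)\to0$ alone. Two minor remarks: the opening appeal to the triangle inequality is superfluous, since you then prove the claim directly against $Q$. In Step 1 no diagonal argument is needed, because dominated convergence applied termwise to the series defining $d_w$ already gives $\E[d_w(\mathbf Q_N,Q)]\to0$.
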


\begin{proof}
By the triangle inequality, it suffices to compare the empirical measure with
\[
    \frac1N\sum_{i=1}^N\Lc(Y^{i,N}).
\]
Fix $\eta>0$.  The convergence of the mean laws in $\Wc_2$, together with
the uniform $(2+\delta)$-moment bound, gives a compact set $K\subset E$ such
that the second-moment contribution of $K^c$ is at most $\eta$, uniformly in
$N$.
Partition $K$ into finitely many Borel sets of diameter at most $\eta$, select
one point in each cell, and map each cell to its selected point; map $K^c$ to
$e_0$. The transport cost created by this map is bounded by $C\eta$
uniformly in $N$. On the
resulting finite state space, the expected total variation distance between the
empirical frequencies and their expectations is $O(N^{-1/2})$, because the
variables are independent.  The squared Wasserstein distance on that finite
state space is bounded by a constant times this total variation distance.
Letting first $N\to\infty$ and then $\eta\downarrow0$ proves the claim.
\end{proof}

A $(2+\delta)$-moment bound remains valid after decreasing $\delta$, so we
take $0<\delta\le2$ without loss of generality.

\begin{lemma}[Compactness and identification of finite-population limits]
\label{lem:finite_population_compactness_identification}
Assume that, for some $\delta>0$,
\begin{equation}\label{eq:uniform_initial_higher_moment}
    \sup_{N\ge1}\frac1N\sum_{i=1}^N
       \int_{\Sbf}|x|^{2+\delta}\,\nu_i(dx,di)<\infty,
\end{equation}
and
\begin{equation}\label{eq:average_initial_law_convergence}
    \Wc_2\!\left(\frac1N\sum_{i=1}^N\nu_i,\nu\right)
    \longrightarrow0.
\end{equation}
Let $\mathbf P^N$ be generated by arbitrary finite-population strategies
satisfying \eqref{eq:finite_constraint_pathwise}.  Then the laws of
$\mathbf P^N$, viewed as probability measures on
$(\Pc_2(\Omega_0),\Wc_{2,0})$, are tight.  If along a subsequence
\[
    \Lc^{\P^0}(\mathbf P^N)\Longrightarrow\Gamma,
\]
then
\begin{equation}\label{eq:limit_law_supported_on_weak_rules}
    \Gamma\big(\widetilde\Pc_W(0,\nu)\big)=1.
\end{equation}
Moreover,
\begin{equation}\label{eq:objective_convergence_random_empirical}
    \E^{\P^0}[\mathcal F_0(\mathbf P^N)]
    \longrightarrow
    \int_{\Pc_2(\Omega_0)}\mathcal F_0(Q)\,\Gamma(dQ)
\end{equation}
whenever the laws of $\mathbf P^N$ converge to $\Gamma$.
\end{lemma}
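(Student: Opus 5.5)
The proof has three steps: tightness of the laws of $\mathbf P^N$ via their mean measures, identification of limit points through the martingale problem of Lemma~\ref{lem:projected_law_compactness} plus a law-of-large-numbers argument, and convergence of the objective by uniform integrability.

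\emph{Tightness.} Set $\bar Q^N:=\E^{\P^0}[\mathbf P^N]=\frac1N\sum_i\Lc(X^{i,N},W^i,I^{i,N})$. A family of random measures on $(\Pc_2(\Omega_0),\Wc_{2,0})$ is tight if the mean measures $\bar Q^N$ are relatively compact in $\Wc_{2,0}$ with uniformly integrable squared norms. In that case the sets $\{Q:\int d_0^2\mathbf 1_{\{d_0>K_j\}}dQ\le \eta_j,\ Q(\mathcal K_j^c)\le \eta_j\ \forall j\}$ are $\Wc_{2,0}$-compact, and Markov's inequality controls their complements.

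\emph{Moment and modulus bounds.} Both are checked as in Lemma~\ref{lem:projected_law_compactness}. The $W$-coordinate has Wiener law and the survival coordinate lives in the compact space $\I([0,T])$, so only the state needs work. Using BDG, the growth condition, and $M_2(\mu^N_r)=\frac1N\sum_j(|X^{j,N}_r|^2+I^{j,N}_r)$, averaged Gronwall estimates give
\[
\sup_N\frac1N\sum_i\E\|X^{i,N}\|_\infty^{2+\delta}<\infty
\]
from \eqref{eq:uniform_initial_higher_moment}. The $(2+\delta)$ moment of the empirical second moment is handled by Jensen. Increment estimates $\frac1N\sum_i\E|X^{i,N}_v-X^{i,N}_u|^{2+\delta}\le C|v-u|^{1+\delta/2}$ then give Kolmogorov-type equicontinuity of $\bar Q^N$, and the $(2+\delta)$ bound yields uniform integrability of $\|x\|_\infty^2$.

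\emph{Identification of limits.} Work along the subsequence with Skorokhod representation, so that $\mathbf P^N\to\mathbf P^\infty\sim\Gamma$ almost surely in $\Wc_{2,0}$. There are three things to check.

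\begin{enumerate}
\item[(a)] \emph{Initial law.} $\Lc^{\mathbf P^\infty}(X_0,I_{0-})=\nu$. The empirical initial law of the independent $(X^i_0,I^i_{0-})$ converges to $\nu$ in $\Wc_2$ in expectation by Lemma~\ref{lem:triangular_wasserstein_lln} and \eqref{eq:average_initial_law_convergence}.
\item[(b)] \emph{Constraint.} $H_0(\mathbf P^\infty)=0$. This holds because $H_0(\mathbf P^N)=0$ pathwise by \eqref{eq:finite_constraint_pathwise} and $H_0$ is continuous by Lemma~\ref{lem:projected_law_compactness}.
\item[(c)] \emph{Martingale problem.} For $\varphi\in C_c^2$, times $u<v$ in a countable dense set $D$, and bounded continuous cylinder $G$ as in Lemma~\ref{lem:projected_law_compactness}, consider
\[
\Psi(Q):=\E^Q\big[(M_v^{Q,\varphi}-M_u^{Q,\varphi})G\big].
\]
Note that in $\mathbf P^N$ the measure argument $\mu^N_r$ equals $\Lc^{\mathbf P^N}(X_r,I_r)$, so $M^{\mathbf P^N,\varphi}$ coincides with the particle Itô expansion. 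Hence
\[
\Psi(\mathbf P^N)=\frac1N\sum_i\int_u^v G^i\,D\varphi(X^{i,N}_r,W^i_r)\cdot\big(\sigma^{i}I^{i,N}_r,\,I_d\big)\,dW^i_r,
\]
a sum of stochastic integrals against independent Brownian motions $W^i$. Its second moment is $O(1/N)$ by the bounded derivatives and the moment bounds, so $\Psi(\mathbf P^N)\to0$ in $L^2$.
\end{enumerate}

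The main obstacle is passing $\Psi(\mathbf P^N)\to\Psi(\mathbf P^\infty)$. Continuity of $\Psi$ holds at $Q$ provided $Q$ has no atoms of $\vartheta_0$ at the evaluation times of $G$. I would choose $D$ avoiding the at most countably many $r$ with $\E^{\Gamma}[Q(I_{0-}=1,\vartheta_0(I)=r)]>0$; then $\Gamma$-a.s.\ $Q$ has no atom there. Along the convergent sequence, the convergence of the generator terms is proved by the coupling argument of \eqref{eq:generator_L1_convergence}. I take uniform integrability from the $(2+\delta)$ bounds. This gives $\Psi(\mathbf P^\infty)=0$ a.s., simultaneously for countably many $(\varphi,u,v,G)$. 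The monotone-class and density arguments plus the extension to $\Omega$ from Lemma~\ref{lem:projected_law_compactness} then give $\mathbf P^\infty\in\widetilde\Pc_W(0,\nu)$, which is \eqref{eq:limit_law_supported_on_weak_rules}.

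\emph{Objective convergence.} $\mathcal F_0$ is continuous on $\Pc_2(\Omega_0)$. The proof is that of Lemma~\ref{lem:reward_lsc} applied to $\pm f$, because the bound $\mathcal H_0(Q_n)\to\mathcal H_0(Q)$ used there did not require admissibility. Hence $\mathcal F_0(\mathbf P^N)\to\mathcal F_0(\mathbf P^\infty)$ almost surely. Moreover $|\mathcal F_0(\mathbf P^N)|\le C(1+\int\|x\|_\infty^2 d\mathbf P^N)$, whose $(1+\delta/2)$-th moment is uniformly bounded by Jensen and the moment bounds. So $\{\mathcal F_0(\mathbf P^N)\}$ is uniformly integrable, and Vitali's theorem gives \eqref{eq:objective_convergence_random_empirical}.
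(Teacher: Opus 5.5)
Your proposal is correct and follows the paper's proof essentially step for step. The shared ingredients are tightness via the mean measures and averaged $(2+\delta)$-moment and modulus bounds, Skorokhod representation, the triangular LLN for the initial law, continuity of $H_0$ for the constraint, the $O(1/N)$ variance bound from orthogonality of the particle martingales, a countable dense set $D$ avoiding atoms of the $\Gamma$-averaged stopping-time law, and uniform integrability plus continuity of $\mathcal F_0$ for the objective. The only difference is cosmetic: your test functions evaluate $I_r$ at atom-free times $r\in D$, so $\Psi$ is continuous only at $\Gamma$-a.e.\ limit point, which suffices. The paper instead uses globally continuous functions of $\vartheta_0(I)$ that are constant on $[s,T]\cup\{T+1\}$, and invokes the atom-free condition only to generate $\Fc_s$ modulo $Q$.
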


\begin{proof}
The standard SDE estimates, applied after averaging over the particle index,
give
\begin{equation}\label{eq:finite_uniform_path_moments}
    \sup_{N\ge1}\frac1N\sum_{i=1}^N
    \E^{\P^0}\!\left[
       \|X^{i,N}\|_\infty^{2+\delta}
       +\|W^i\|_\infty^{2+\delta}
    \right]<\infty.
\end{equation}
They also give, for $0\le s\le t\le T$,
\begin{equation}\label{eq:finite_uniform_modulus}
    \frac1N\sum_{i=1}^N
       \E^{\P^0}[|X^{i,N}_t-X^{i,N}_s|^{2+\delta}]
    \le C|t-s|^{1+\delta/2}.
\end{equation}
The same estimate holds for the Brownian coordinates.  Since
$\I([0,T])$ is compact, \eqref{eq:finite_uniform_path_moments},
\eqref{eq:finite_uniform_modulus}, Kolmogorov's tightness criterion, and the
uniform integrability supplied by the $(2+\delta)$-moment bound imply relative
compactness in $\Wc_{2,0}$ of the deterministic mean measures
$\E^{\P^0}[\mathbf P^N]$.

These estimates also give tightness of the laws of the random measures in the
Wasserstein topology.  Fix a reference point $\omega_0\in\Omega_0$.
The preceding relative compactness allows us, for every $\eta>0$, to choose
increasing compact sets $K_\ell\subset\Omega_0$ such that
\[
\begin{split}
    \sup_N\E^{\P^0}[\mathbf P^N(K_\ell^c)]
    +\sup_N\E^{\P^0}\!\left[
       \int_{K_\ell^c}d_0(\omega,\omega_0)^2
          \,\mathbf P^N(d\omega)
    \right]
    \le \eta 2^{-3\ell-2}.
\end{split}
\]
By Markov's inequality and a union bound, with probability at least
$1-\eta$ the random measure $\mathbf P^N$ satisfies, simultaneously for every
$\ell$,
\[
    \mathbf P^N(K_\ell^c)\le2^{-\ell},
    \qquad
    \int_{K_\ell^c}d_0(\omega,\omega_0)^2
       \,\mathbf P^N(d\omega)\le2^{-\ell}.
\]
The family of measures satisfying these bounds is tight and has uniformly
integrable second moments; its closure is therefore compact in
$\Wc_{2,0}$.  This proves tightness of
$\{\Lc^{\P^0}(\mathbf P^N)\}_N$ on $\Pc_2(\Omega_0)$.

Passing to a subsequence, the Skorokhod representation theorem allows us to
assume that $\mathbf P^N\to\mathbf P$ almost surely in $\Wc_{2,0}$ and
$\Lc(\mathbf P)=\Gamma$.  By Lemma
\ref{lem:triangular_wasserstein_lln}, applied to the independent initial pairs,
\[
    \Wc_2\!\left(
       \frac1N\sum_{i=1}^N\delta_{(X^i_0,I^i_{0-})},\nu
    \right)\longrightarrow0
    \quad\hbox{in }L^2(\P^0).
\]
The marginal map $Q\mapsto\Lc^Q(X_0,I_{0-})$ is continuous under the
Wasserstein topologies.  Hence, on the Skorokhod space, the initial marginals
of $\mathbf P^N$ converge almost surely to the initial marginal of
$\mathbf P$, while their distance from $\nu$ converges to zero in
probability by the last display.  The $(X_0,I_{0-})$-marginal of $\mathbf P$
is therefore $\nu$ almost surely.  Also, $H_0(\mathbf P^N)=0$ almost surely by
\eqref{eq:finite_constraint_pathwise}.  The continuity of $H_0$, established in
Lemma \ref{lem:projected_law_compactness}, yields
$H_0(\mathbf P)=0$ almost surely.

The path metric also controls the marginal flows.  For
$Q,Q'\in\Pc_2(\Omega_0)$, coupling the full paths and then projecting at each
time gives
\begin{equation}\label{eq:integrated_marginal_bound}
    \int_0^T\Wc_2^2
       \big(\Lc^Q(X_r,I_r),\Lc^{Q'}(X_r,I_r)\big)\,dr
    \le C\Wc_{2,0}^2(Q,Q')+C\Wc_{2,0}(Q,Q').
\end{equation}
Here the state contribution is bounded by the uniform path distance, while
\[
    \int_0^T|i_r-i'_r|\,dr\le d_{\I,0}(i,i').
\]
We next identify the dynamics for $\Gamma$-almost every realization of the
random limiting law.  For
$Q\in\Pc_2(\Omega_0)$ and $\phi\in C_c^2(\R^n\times\R^d)$, write
$Q_r:=\Lc^Q(X_r,I_r)$ and set
\begin{align*}
    \mathcal L_r^{Q_r}\phi(x,w,i)
    :={}&D_x\phi(x,w)\cdot b(r,x,Q_r)i
       +\frac12D^2_{xx}\phi(x,w):
          \sigma\sigma^\top(r,x,Q_r)i\\
      &+\frac12\Delta_w\phi(x,w)
       +D^2_{xw}\phi(x,w):\sigma(r,x,Q_r)i,
\end{align*}
and
\begin{equation}\label{eq:quenched_canonical_martingale}
\begin{split}
    M_t^{Q,\phi}
    :=\phi(X_t,W_t)-\phi(X_0,0)
    -\int_0^t
          \mathcal L_r^{Q_r}\phi(X_r,W_r,I_r)\,dr.
\end{split}
\end{equation}
For each particle, It\^o's formula shows that
$M_t^{\mathbf P^N,\phi}(X^{i,N},W^i,I^{i,N})$ is an
$\mathbb F^N$-martingale.  If $0\le s<t\le T$ and $h$ is bounded, continuous,
and $\Fc_s$-measurable on $\Omega_0$, then
$h(X^{i,N},W^i,I^{i,N})$ is $\Fc^N_s$-measurable.  The martingale increments
for different particle indices are orthogonal: the controls may use the whole
population filtration, but the corresponding stochastic integrals are driven
by Brownian motions with zero mutual quadratic covariation.  Consequently,
\begin{equation}\label{eq:empirical_martingale_variance}
\begin{split}
    \E^{\P^0}\!\left[
       \left|\int_{\Omega_0}h(\omega)
          \big(M_t^{\mathbf P^N,\phi}(\omega)
              -M_s^{\mathbf P^N,\phi}(\omega)\big)
          \mathbf P^N(d\omega)\right|^2
    \right]
    \le \frac{C_\phi\|h\|_\infty^2}{N}.
\end{split}
\end{equation}
The constant $C_\phi$ is finite and independent of $N$ by
\eqref{eq:finite_uniform_path_moments}.

The variance estimate \eqref{eq:empirical_martingale_variance} yields
martingale identities for almost every realization of the limiting random
law.  Let
\[
    \lambda_\Gamma(B)
       :=\int_{\Pc_2(\Omega_0)}
           Q\big(\vartheta_0(I)\in B\big)\,\Gamma(dQ),
    \qquad B\in\mathcal B\big([0,T]\cup\{T+1\}\big).
\]
The set of atoms of $\lambda_\Gamma$ is at most countable.  Choose a countable
dense set $D\subset(0,T)$ which contains no such atom.  Then, for every
$s\in D$,
\begin{equation}\label{eq:no_quenched_atom_on_D}
    Q\big(\vartheta_0(I)=s\big)=0
    \quad\text{for $\Gamma$-almost every }Q.
\end{equation}
For each $s\in D$, fix a countable algebra $\mathcal H_s$ of bounded continuous
$\Fc_s$-measurable functions generated by bounded continuous cylinder
functions of the $X$- and $W$-coordinates at rational times not exceeding
$s$, the coordinate $I_{0-}$, and continuous functions of
$\vartheta_0(I)$ which are constant on $[s,T]\cup\{T+1\}$.  If
$Q(\vartheta_0(I)=s)=0$, this algebra generates $\Fc_s$ modulo $Q$: the only
information lost by forcing continuity at $s$ is the distinction between
$\vartheta_0(I)=s$ and $\vartheta_0(I)>s$, and the former event is $Q$-null.
Rational-time cylinder functions generate the continuous-coordinate
sigma-fields, while continuous functions of $\vartheta_0(I)$ that are constant
on $[s,T]\cup\{T+1\}$ generate the survival-coordinate sigma-field modulo the
null boundary event; the monotone-class theorem gives the claim. Also fix a
countable subset of $C_c^2(\R^n\times\R^d)$ that is dense for uniform
convergence on compact sets of the functions and their derivatives through
order two. Localization and the coefficient bounds extend identities from
this core to all of $C_c^2(\R^n\times\R^d)$.

For $s<t$ in $D$, $h\in\mathcal H_s$, and a test function $\phi$ in this
countable core, define
\[
    \Phi_{s,t}^{\phi,h}(Q)
       :=\int_{\Omega_0}h(\omega)
          \big(M_t^{Q,\phi}(\omega)-M_s^{Q,\phi}(\omega)\big)
          \,Q(d\omega).
\]
This functional is continuous under $\Wc_{2,0}$.  The endpoint-evaluation maps
for the continuous coordinates and the function $h$ are continuous, while
\eqref{eq:integrated_marginal_bound} gives convergence of the marginal flows in
$L^2([0,T];\Wc_2)$.  The Lipschitz and growth bounds for $b$ and $\sigma$ then
give convergence in $L^1$ of the integrated generators; for the
$\sigma\sigma^\top$ term, use Cauchy--Schwarz together with convergence of the
second moments supplied by $\Wc_{2,0}$ convergence.

Consequently, on the Skorokhod space,
$\Phi_{s,t}^{\phi,h}(\mathbf P^N)\to
 \Phi_{s,t}^{\phi,h}(\mathbf P)$ almost surely.  On the other hand,
\eqref{eq:empirical_martingale_variance} shows that the left-hand side converges
to zero in $L^2$.  Hence
\[
    \Phi_{s,t}^{\phi,h}(\mathbf P)=0
    \quad\text{almost surely}.
\]
Take the intersection of the full-measure sets obtained for the countable choices of
$s,t,h,\phi$, the atom-free conditions in
\eqref{eq:no_quenched_atom_on_D}, and the initial-law and constraint
conditions.  This yields a Borel set $\mathcal G\subset\Pc_2(\Omega_0)$ with
$\Gamma(\mathcal G)=1$.  Every $Q\in\mathcal G$ satisfies all the martingale
identities, has no stopping-time atom at any $s\in D$, has initial marginal
$\nu$, and satisfies the constraint.

Fix $Q\in\mathcal G$.  For $s<t$ in $D$, the monotone-class theorem extends the
identity from $\mathcal H_s$ to every bounded $\Fc_s$-measurable $h$, and the
$C^2$ approximation extends it to every $\phi\in C_c^2$.  Thus
$M^{Q,\phi}$ has the martingale increment property at times in $D$.  Since
$M^{Q,\phi}$ has continuous paths, this already gives the martingale property
at arbitrary times.  More explicitly, for $0\le s<t\le T$, a bounded
$\Fc_s$-measurable $h$, and sequences
$s_k\in D\cap(s,(s+t)/2)$ with $s_k\downarrow s$ and
$t_k\in D\cap((s+t)/2,t)$ with $t_k\uparrow t$, one has
$h\in\Fc_{s_k}$ and
\[
    \E^Q[h(M_{t_k}^{Q,\phi}-M_{s_k}^{Q,\phi})]=0.
\]
Letting $k\to\infty$ proves the identity at $(s,t)$; the cases $s=0$ or $t=T$
are handled by the same one-sided approximation.  Thus every
$Q\in\mathcal G$ solves the full joint martingale problem with generator
$\mathcal L_r^{Q_r}$; the identity is not merely averaged over $\Gamma$.

We now translate the joint martingale problem into the formulation of a weak
stopping rule.  Apply the identities first to compactly supported cutoffs
of the coordinate functions and their products, and then localize. Test
functions depending only on $w$, on $x$, and on the products
$w_aw_b$, $x_ax_b$, and $x_aw_b$ show that, under $Q$, $W$ is a continuous
local martingale with
$\langle W\rangle_t=tI_d$, and that
\[
    Y_t:=X_t-X_0-\int_0^t b(r,X_r,Q_r)I_r\,dr
\]
is a continuous local martingale satisfying
\[
    d\langle Y\rangle_r
       =\sigma\sigma^\top(r,X_r,Q_r)I_r\,dr,
    \qquad
    d\langle Y,W\rangle_r
       =\sigma(r,X_r,Q_r)I_r\,dr.
\]
L\'evy's characterization makes $W$ a Brownian motion with respect to the
completed canonical filtration.  The continuous local martingale
\[
    Y_t-\int_0^t\sigma(r,X_r,Q_r)I_r\,dW_r
\]
has zero quadratic variation and is therefore identically zero.  Hence
\[
    X_t=X_0+
      \int_0^t b(r,X_r,\Lc^Q(X_r,I_r))I_r\,dr
      +\int_0^t\sigma(r,X_r,\Lc^Q(X_r,I_r))I_r\,dW_r,
\]
$Q$-almost surely.  Together with the initial-law and constraint identities
proved above, this gives $Q\in\widetilde\Pc_W(0,\nu)$ for every
$Q\in\mathcal G$, and proves \eqref{eq:limit_law_supported_on_weak_rules}.

Finally, $\mathcal F_0$ is continuous on
$(\Pc_2(\Omega_0),\Wc_{2,0})$.  Let $Q_k\to Q$ in
$\Wc_{2,0}$ and use asymptotically optimal couplings.  The coupled path
coordinates converge in $L^2$, and \eqref{eq:integrated_marginal_bound} gives
convergence of the $(X_r,I_r)$-marginals in
$L^2([0,T];\Wc_2)$.  Along a subsequence the state, survival coordinate, and
marginal law therefore converge for almost every time.  The continuity of
$f(r,\cdot,\cdot)$ gives convergence of the integrand, while its quadratic
growth and $\Wc_{2,0}$ convergence give uniform integrability.  The subsequence
principle then yields
\[
    \mathcal F_0(Q_k)\longrightarrow\mathcal F_0(Q).
\]
On the Skorokhod representation space,
$\mathcal F_0(\mathbf P^N)\to\mathcal F_0(\mathbf P)$ almost surely.  Moreover,
\[
    |\mathcal F_0(\mathbf P^N)|
       \le C\left(1+
          \int_{\Omega_0}\|x\|_\infty^2\,\mathbf P^N(d\omega)
       \right),
\]
and Jensen's inequality together with
\eqref{eq:finite_uniform_path_moments} gives
\[
    \sup_N\E^{\P^0}
       [|\mathcal F_0(\mathbf P^N)|^{1+\delta/2}]<\infty.
\]
These random variables are therefore uniformly integrable, and
\eqref{eq:objective_convergence_random_empirical} follows.
\end{proof}

\begin{lemma}[Recovery sequence]\label{lem:finite_population_recovery}
Assume \eqref{eq:uniform_initial_higher_moment},
\eqref{eq:average_initial_law_convergence}, and
    $\E^\nu[I_{0-}]>\alpha$.
For every $\P\in\widetilde\Pc_W(0,\nu)$, there are admissible
finite-population strategies satisfying \eqref{eq:finite_constraint_pathwise}
such that
\begin{equation}\label{eq:recovery_empirical_convergence}
    \E^{\P^0}[\Wc_{2,0}^2(\mathbf P^N,\P)]\longrightarrow0
\end{equation}
and
\begin{equation}\label{eq:recovery_objective_convergence}
    J_N(\Ic^N)\longrightarrow\mathcal F_0(\P).
\end{equation}
\end{lemma}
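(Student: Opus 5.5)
We propose to follow the route sketched in the introduction: approximate $\P$ by a strong rule with finitely many decision times, implement it independently on each particle, and then repair the empirical constraint by altering a vanishing fraction of particles.

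\emph{Step 1: reduction to a nice mean-field rule with a margin.} By Theorem~\ref{thm:equivalence_value_function}(ii) together with Lemmas~\ref{lem:approximate_by_discrete_stopping}--\ref{lem:restore_dynamics}, it suffices, by a diagonal argument, to treat a rule $\P\in\widetilde\Pc_S(0,\nu)$ whose survival process is piecewise constant on a grid $\pi_m$ and given at $t_i^m$ by $\mathbf 1\{U_i\le q_i(X_0,I_{0-},B_{\cdot\wedge t_i})\}$, with seeds taken from a reserved Brownian interval. We then mollify: first replace the Borel kernels $q_i$ by continuous functions (Lusin) and the indicator by a continuous threshold. Next use the margin $p(\nu)>\alpha$ to make the constraint strict. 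Mix, with a small weight $\lambda$, the given rule with ``never stop before $\theta$'' (where $\theta$ is the deterministic death time of $p_s$), exactly as in the proof of Lemma~\ref{lem:value_initial_stability}. This yields survival masses $p_s\in\{0\}\cup[\alpha+\kappa,1]$ for some $\kappa>0$, except on the survival set where $p_s$ would otherwise sit exactly at $\alpha$. Lower semicontinuity/continuity of $\mathcal F_0$ (Lemma~\ref{lem:projected_law_compactness}) makes these modifications cost $o(1)$.

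\emph{Step 2: independent implementation.} For each $i\le N$, define $\bar I^{i}$ by applying the same nonanticipative functional to $(X_0^i,I_{0-}^i,W^i)$. The $\bar I^i$ are independent and $\mathbb F^N$-adapted, and $\bar I^i$ is identically zero after $\theta$ (we also set it to zero there). Solve the $N$-particle system \eqref{eq:finite_population_sde} with these controls, and compare it with the independent McKean--Vlasov copies $\bar X^i$ driven by the same inputs. Standard propagation of chaos for Lipschitz coefficients gives $\frac1N\sum\E\|X^{i,N}-\bar X^i\|_\infty^2\to0$, with the error controlled through $\E\Wc_2^2(\bar\mu^N_r,\mu_r)\to0$. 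The latter follows from Lemma~\ref{lem:triangular_wasserstein_lln}, applied to the path-valued independent variables $(\bar X^i,W^i,\bar I^i)$, whose averaged laws converge to $\P$ in $\Wc_{2,0}$ by the stability estimate in $\nu_i$ (here \eqref{eq:average_initial_law_convergence} and \eqref{eq:uniform_initial_higher_moment} are used).

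\emph{Step 3: restoring the empirical constraint (main obstacle).} The empirical mass $\bar q^N_r$ is close to $p_r$ only in probability, so it may dip into $(0,\alpha)$. Since the decisions change only at the $m$ grid times, we need to control $\bar q^N$ only at finitely many times. There the LLN gives $\P^0(\max_i|\bar q^N_{t_i}-p_{t_i}|>\kappa/2)\to0$, and also $\frac1N\sum I^i_{0-}\ge\alpha+\kappa/2$ with high probability. We then define the admissible strategy as follows: use $\bar I^i$ up to the stopping time $\rho_N=$ first grid time at which the empirical mass would drop below $\alpha$, and at $\rho_N$ stop everyone (as in Lemma~\ref{lem:finite_population_cutoff}). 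This is adapted and satisfies \eqref{eq:finite_constraint_pathwise} pathwise. It differs from $\bar I$ only on an event of vanishing probability. On that event, the uniform $(2+\delta)$-moment bounds give uniform integrability, so its contribution to $\E\Wc_{2,0}^2$ and to the reward vanishes. The delicate point is the times where $p$ equals exactly $\alpha$ before the margin trick; Step~1's mixing is designed to remove them, and we expect to check carefully that the mixing with ``stop at $\theta$'' is compatible with $p$ jumping to $0$ at $\theta$.

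\emph{Step 4: conclusion.} Combining these estimates gives \eqref{eq:recovery_empirical_convergence}. Since the repaired strategy satisfies the constraint, \eqref{eq:finite_objective_as_random_functional} gives $J_N=\E\mathcal F_0(\mathbf P^N)$. Continuity of $\mathcal F_0$ on $\Pc_2(\Omega_0)$ and the uniform integrability from Lemma~\ref{lem:finite_population_compactness_identification} then yield \eqref{eq:recovery_objective_convergence}. Finally, a diagonal choice of $m=m(N)$ and $\lambda=\lambda(N)$ passes from the approximating rules back to the original $\P$.
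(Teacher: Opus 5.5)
Your argument is correct, but its repair step differs from the paper's. The paper creates no margin. It approximates a constrained strong rule by a.e.-continuous finite-valued rules $\varphi_m$ and implements them independently as $J^i$. It then sets $k_N=\lceil\alpha N\rceil$ and works on the event $G_N$ that at least $k_N$ particles start alive. At the first time $\rho_N<\tau\wedge T$ at which fewer than $k_N$ of the $J^i$ survive, it keeps alive, until the limit death time $\tau$, an $\Fc^N_{\rho_N}$-measurable set $A_N$ of exactly $k_N$ indices: everyone alive at $\rho_N$, plus some particles that have just stopped. The modification then costs exactly $(k_N/N-M_N(r))^+$ at each $r<\tau\wedge T$. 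Its expectation is at most $1/N+1/(2\sqrt N)+|\overline p_N(r)-p_r|$, because $p_r\ge\alpha$ before $\tau$. This bound stays small even when $p_r=\alpha$ on a whole interval, where the empirical crossing occurs with non-vanishing probability. Your cruder repair, stopping everyone at $\rho_N$, would lose a non-vanishing amount in exactly that situation, and that is why you need the margin $\kappa=\lambda(p(\nu)-\alpha)$. With the margin, the repair event has vanishing probability for fixed parameters, and uniform integrability finishes the argument. So you trade the paper's ``keep exactly $k_N$ alive'' device for an extra perturbation of the mean-field rule and one more diagonal parameter. The paper instead works directly with any constrained strong rule and gets a quantitative bound on the modification.

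Three points to tighten. First, the exception clause in your Step~1 is unnecessary. The mixture has mass $(1-\lambda)p_s+\lambda p(\nu)\ge\alpha+\kappa$ on all of $[0,\theta)$, including where $p_s=\alpha$. After $\theta$, both the limit and empirical masses are forced to $0$, so the jump at $\theta$ is harmless. Second, the mixing seed cannot be adjoined as in Lemma~\ref{lem:value_initial_stability}, because $\mathbb F^N$ carries no extra randomness. Extract it from one more sub-interval of the reserved Brownian window, or mix at the weak level and apply Theorem~\ref{thm:equivalence_value_function}(ii) at threshold $\alpha+\kappa$. Third, choose the Lusin approximation only after $\lambda$ is fixed, so that it perturbs the grid masses by less than $\kappa/2$.
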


The recovery construction rests on the following approximation result, which is the main technical difficulty in the limit theory compared to the usual argument. The standard argument is a combination of law of large numbers of i.i.d. copies of the stopping rules and corresponding uniformly-integrability estimate. With constraint, we need subtle modification to the stopping rules in $N$-particle system so that the constraint is always satisfied.
\begin{lemma}
\label{lem:finite_valued_triangular_approximation}
Assume \eqref{eq:uniform_initial_higher_moment} and
\eqref{eq:average_initial_law_convergence}, and let
$\P\in\widetilde\Pc_S(0,\nu)$.  There exist non-anticipative Borel maps
\[
    \varphi_m:\R^n\times\{0,1\}\times\Cc^d
       \longrightarrow\I([0,T]),
    \qquad m\ge1,
\]
with the following properties:
\noindent $\mathrm{(i)}$
For each $m$, $\vartheta_0(\varphi_m(x,i,w))$ takes values in a finite subset
of $[0,T]\cup\{T+1\}$, $\varphi_m(x,i,w)_{0-}=i$, and $\varphi_m$ is
continuous outside a set of
$\Lc^\P(X_0,I_{0-},W)$-measure zero.  If
\[
    I^m:=\varphi_m(X_0,I_{0-},W),
\]
$X^m$ is the McKean--Vlasov state driven by $W$, controlled by $I^m$, and using
its own marginal flow, and
\[
    \P^m:=\Lc^\P(X^m,W,I^m),
    \qquad
    p^m_r:=\E^\P[I^m_r],
\]
then
\begin{equation}\label{eq:finite_valued_rule_limit}
    \Wc_{2,0}(\P^m,\P)
      +\int_0^T|p^m_r-\E^\P[I_r]|\,dr
      +|p^m_T-\E^\P[I_T]|
    \longrightarrow0.
\end{equation}

\noindent$\mathrm{(ii)}$
For each fixed $m$, apply the same rule to the independent, not necessarily
identically distributed inputs and set
\[
    J^{i,m}:=\varphi_m(X^i_0,I^i_{0-},W^i).
\]
Let $\widehat X^{i,m}$ solve the SDE with initial state $X^i_0$, Brownian motion
$W^i$, control $J^{i,m}$, and the deterministic marginal flow
$\mu^m_r:=\Lc^\P(X^m_r,I^m_r)$.  Set
\[
    \widehat{\mathbf P}^{N,m}
      :=\frac1N\sum_{i=1}^N
           \delta_{(\widehat X^{i,m},W^i,J^{i,m})},
    \qquad
    \widehat\mu^{N,m}_r
      :=\frac1N\sum_{i=1}^N
           \delta_{(\widehat X^{i,m}_r,J^{i,m}_r)},
\]
and
\[
    \overline p^{N,m}_r
       :=\frac1N\sum_{i=1}^N\E^{\P^0}[J^{i,m}_r].
\]
Then, for every fixed $m$,
\begin{equation}\label{eq:fixed_m_triangular_approximation}
\begin{split}
    \E^{\P^0}\!
       [\Wc_{2,0}^2(\widehat{\mathbf P}^{N,m},\P^m)]
      +\E^{\P^0}\!\left[
         \int_0^T\Wc_2^2(\widehat\mu^{N,m}_r,\mu^m_r)\,dr
       \right]
      +\int_0^T|\overline p^{N,m}_r-p^m_r|\,dr
      +|\overline p^{N,m}_T-p^m_T|
      \longrightarrow0.
\end{split}
\end{equation}
Moreover, there is a nondecreasing sequence of integers $m_N\to\infty$ such
that
\begin{equation}\label{eq:triangular_diagonal_conclusion}
\begin{split}
    &\Wc_{2,0}^2(\P^{m_N},\P)
      +\E^{\P^0}\!
         [\Wc_{2,0}^2(\widehat{\mathbf P}^{N,m_N},\P^{m_N})]\\
    &\quad
      +\E^{\P^0}\!\left[
         \int_0^T
            \Wc_2^2(\widehat\mu^{N,m_N}_r,\mu^{m_N}_r)\,dr
       \right]
      +\int_0^T
          |\overline p^{N,m_N}_r-\E^\P[I_r]|\,dr
      +|\overline p^{N,m_N}_T-\E^\P[I_T]|
      \longrightarrow0.
\end{split}
\end{equation}
\end{lemma}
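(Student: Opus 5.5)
We build $\varphi_m$ in three stages, then handle the particle system for fixed $m$, then diagonalize.

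\emph{Construction of $\varphi_m$.} Since $\P\in\widetilde\Pc_S(0,\nu)$, each $I_r$ is a measurable function of $(X_0,I_{0-},W_{\cdot\wedge r})$, so $\tau=\vartheta_0(I)=\Theta(X_0,I_{0-},W)$ for a nonanticipative Borel map $\Theta$.

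First, round $\tau$ up to the grid $\pi_m$, exactly as in Lemma~\ref{lem:approximate_by_discrete_stopping}. Each event $\{\tau^m\le t_i^m\}$ then lies in $\sigma(X_0,I_{0-},W_{\cdot\wedge t_i^m})$.

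Second, approximate these events by continuity sets. Let $\rho:=\Lc^\P(X_0,I_{0-},W)$. By regularity of $\rho$, each event $A_i$ can be approximated in $\rho$-measure by a set whose boundary is $\rho$-null. We take finite unions of cylinder sets built from open balls in $(x,w_{\cdot\wedge t_i^m})$ with radii chosen off the countably many charged spheres. To keep monotonicity, we replace them by $B_i:=\bigcup_{j\le i}\widetilde A_j$, with error at most $\sum_{j\le i}\rho(\widetilde A_j\triangle A_j)$. We then set $\vartheta_0(\varphi_m)=\min\{t_i^m: (x,i,w)\in B_i\}$ (or $T+1$), and $\varphi_m=(0,\dots)$ when $i=0$. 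The resulting map is nonanticipative, finite-valued, and continuous off $\bigcup_i\partial B_i$, which is $\rho$-null. Choosing the tolerances $2^{-m}/m$ gives $\E|\vartheta_0(I^m)-\tau|\to0$ (recall $\vartheta\le T+1$ is bounded) and $\P(I^m_T\ne I_T)\to0$.

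Third, obtain \eqref{eq:finite_valued_rule_limit}. The Gronwall/BDG estimate of Lemma~\ref{lem:approximate_by_discrete_stopping}, with the weighted term controlled by uniform integrability as in Lemma~\ref{lem:value_initial_stability}, gives $\E\|X^m-X\|_\infty^2\to0$. Together with \eqref{eq:I_L1_bound}, this yields the first two terms. The last term follows from $\P(I^m_T\ne I_T)\to0$.

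\emph{Fixed $m$.} Write $Y^{i}:=(\widehat X^{i,m},W^i,J^{i,m})$. With the flow $\mu^m$ frozen and deterministic, $Y^i$ is a measurable function $G_m$ of the independent input $(X^i_0,I^i_{0-},W^i)$. Hence the $Y^i$ are independent, and by Lemma~\ref{lem:moment_estimate}-type estimates they have uniform $(2+\delta)$-moments.

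Next we show that $\frac1N\sum_i\Lc(Y^i)\to\P^m$ in $\Wc_{2,0}$. The average input law $\frac1N\sum_i\nu_i\otimes\text{Wiener}$ converges to $\nu\otimes\text{Wiener}=\rho$ in $\Wc_2$. We then need continuity of the pushforward under $G_m$. The map $\varphi_m$ is $\rho$-a.s. continuous, so weak convergence passes through it. The SDE solution map with a frozen flow and a given control is stable. Concretely, we use Skorokhod coupling of the mixture inputs, apply Lemma~\ref{lem:input_law_invariance} to identify laws, and get $\P^m$ as the image of $\rho$. Uniform integrability from the $(2+\delta)$ bound then upgrades weak convergence to $\Wc_2$ convergence.

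Lemma~\ref{lem:triangular_wasserstein_lln} now gives the first term of \eqref{eq:fixed_m_triangular_approximation}. The marginal-flow term follows from \eqref{eq:integrated_marginal_bound}. The terms involving $\overline p^{N,m}$ are expectations of bounded functionals of $J^{i,m}$, namely $\int|i_r-i'_r|dr$ and evaluation at $T$. The latter is continuous at $\P^m$-a.e. path because $\vartheta_0$ takes finitely many values, and we can ensure that $T$ is a continuity point by treating the value $T$ separately, or by noting that the atom at $T$ is finite-valued and convergence holds setwise under continuity of $\varphi_m$. So these terms converge as well.

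\emph{Diagonal and main obstacle.} For the diagonal, let $\varepsilon_m(N)$ denote the fixed-$m$ error. We choose $m_N$ nondecreasing with $\varepsilon_{m_N}(N)\le 1/m_N$, which is possible since each $\varepsilon_m(N)\to0$. The triangle inequality combined with \eqref{eq:finite_valued_rule_limit} then gives \eqref{eq:triangular_diagonal_conclusion}.

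The main obstacle is the second stage of the construction: producing a nonanticipative, monotone, finite-valued rule that is $\rho$-a.s. continuous while still approximating $\tau$ in probability. The convergence of the non-identically distributed averages in the fixed-$m$ step also relies on exactly this a.s. continuity.
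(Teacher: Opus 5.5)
Your proposal is correct and follows essentially the same route as the paper. The steps match: rounding to the grid, approximating the prefix events by continuity sets to get a finite-valued, nonanticipative, $\rho$-a.s. continuous rule, a Gronwall/BDG estimate for part (i), then for fixed $m$ a coupling of the averaged initial law with $\nu$, the extended continuous mapping theorem and Lemma~\ref{lem:triangular_wasserstein_lln}, and finally a standard diagonal choice of $m_N$.

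Two small clarifications. In the fixed-$m$ step, couple only the initial pairs and keep one common Brownian motion; this is possible because both input laws are products with Wiener measure, and the standard $L^2$ SDE stability estimate needs a common driving Brownian motion rather than a Skorokhod coupling of the Brownian paths. Also, $i\mapsto i_T$ is continuous on all of $\I([0,T])$ under $d_{\I,0}$, since $T+1$ is isolated from $[0,T]$, so the value $T$ needs no special treatment.
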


\begin{proof}
Since $\P$ is a strong stopping rule, $\vartheta_0(I)$ is a stopping time
for the filtration generated by $(X_0,I_{0-},W)$; equivalently, its level
events admit non-anticipative Borel representatives on the input space.

Choose partitions
\[
    0=t^m_0<t^m_1<\cdots<t^m_m=T,
    \qquad |\pi_m|\longrightarrow0.
\]
On $\{I_{0-}=1\}$, consider the disjoint events
\[
    \{\vartheta_0(I)=0\},
    \qquad
    \{t^m_{j-1}<\vartheta_0(I)\le t^m_j\},
    \quad 1\le j\le m.
\]
The $j$th event is measurable with respect to the sigma-field generated by
$X_0$, $I_{0-}$, and the Brownian path up to $t^m_j$.  By the Doob--Dynkin
lemma, choose a Borel subset $B^m_j$ of the corresponding Polish prefix space
whose pullback agrees almost surely with that event.
By regularity of Borel probability measures, $B^m_j$ can be approximated by a
continuity set $A^m_j$ in the same prefix space.  Choose the approximations so
that the pullback of every
$\partial A^m_j$ has zero $\Lc^\P(X_0,I_{0-},W)$-measure and
\[
    \sum_{j=0}^m
       \P\!\left(
          \{(X_0,I_{0-},W_{\cdot\wedge t^m_j})\in A^m_j\}
          \mathbin\triangle
          \{(X_0,I_{0-},W_{\cdot\wedge t^m_j})\in B^m_j\}
       \right)
    \le m^{-2}.
\]
Let the approximating stopping parameter be $0$ when the initial
indicator is zero, and otherwise to be the first value among
$0,t^m_1,\ldots,t^m_m$ whose corresponding prefix belongs to $A^m_j$; use
$T+1$ when no such event occurs.  This is a finite-valued stopping time.  The
associated map $\varphi_m$ is continuous away from the union of the pullbacks
of the boundaries $\partial A^m_j$.

On the event that all approximating events agree with the corresponding
original events, the approximating stopping parameter differs from the
original by at most $|\pi_m|$ on $[0,T]$, while $T+1$ is preserved.  Since the
stopping-parameter space is bounded,
\begin{equation}\label{eq:finite_rule_control_error}
    \E^\P[d_{\I,0}(I^m,I)^2]
       \le |\pi_m|^2+(T+1)^2m^{-2}
       \longrightarrow0.
\end{equation}
Thus the construction does not identify stopping at $T$ with
survival through $T$.

The uniform higher-moment assumption and
\eqref{eq:average_initial_law_convergence} imply that $\nu$ has a finite
$(2+\delta)$-moment.  Couple the equations for $X^m$ and $X$ using the same
initial variable and Brownian motion, and couple their measure arguments by
$(X^m,I^m)$ and $(X,I)$.  The Lipschitz property of $b$ and $\sigma$, the
Burkholder--Davis--Gundy inequality, H\"older's inequality, and Gronwall's
lemma give
\[
    \E^\P\!\left[\sup_{0\le r\le T}|X^m_r-X_r|^2\right]
       \le C
       \big(\E^\P[d_{\I,0}(I^m,I)^2]\big)^{\delta/(2+\delta)}
       \longrightarrow0.
\]
Together with \eqref{eq:finite_rule_control_error}, this proves the Wasserstein
convergence in \eqref{eq:finite_valued_rule_limit}.  The two survival-mass
convergences follow directly from
\[
    \int_0^T|I^m_r-I_r|\,dr\le d_{\I,0}(I^m,I),
    \qquad
    |I^m_T-I_T|\le d_{\I,0}(I^m,I).
\]

Fix $m$.  Since $W$ is Brownian with respect to a filtration containing
$(X_0,I_{0-})$, the law of $(X_0,I_{0-},W)$ under $\P$ is the product of
$\nu$ and Wiener measure.  Let an initial pair with law
$N^{-1}\sum_{i=1}^N\nu_i$ be optimally coupled with one of law $\nu$, and use
the same independent Brownian motion for the two equations.
The coupled initial pairs converge in $L^2$.  Since $\varphi_m$ is continuous
outside a null set for the limiting input law, the extended continuous mapping
theorem gives convergence in probability of the corresponding survival paths
under $d_{\I,0}$.  The distance is bounded, so this convergence also holds in
every finite $L^q$.  Stability of the SDE then gives convergence in
$L^2$ of the state paths.  The law of the solution started from the
mixture $N^{-1}\sum_i\nu_i$ is exactly the average of the laws of the
solutions started from the individual $\nu_i$.  It follows that
\begin{equation}\label{eq:fixed_m_average_law_convergence}
    \Wc_{2,0}\!\left(
       \frac1N\sum_{i=1}^N
          \Lc^{\P^0}(\widehat X^{i,m},W^i,J^{i,m}),
       \P^m
    \right)\longrightarrow0.
\end{equation}
This argument uses only convergence of the average initial law; the individual
laws $\nu_i$ need not be equal.

For fixed $m$, the triples
$(\widehat X^{i,m},W^i,J^{i,m})$, $1\le i\le N$, are independent.  The averaged
$(2+\delta)$-moment estimate for the equations is uniform in $N$.
Lemma \ref{lem:triangular_wasserstein_lln}, together with
\eqref{eq:fixed_m_average_law_convergence}, therefore yields the first term of
\eqref{eq:fixed_m_triangular_approximation}.  The integrated marginal term
follows from \eqref{eq:integrated_marginal_bound}.  The last two terms follow
from \eqref{eq:fixed_m_average_law_convergence}, the inequality
$\int_0^T|i_r-i'_r|dr\le d_{\I,0}(i,i')$, and the continuity of
$i\mapsto i_T$ under $d_{\I,0}$.

A diagonal sequence can be chosen as follows.  Pass to and relabel a
subsequence so that the
left-hand side of
\eqref{eq:finite_valued_rule_limit} is at most $1/m$.  For each $m$, choose
$N_m>N_{m-1}$ such that the left-hand side of
\eqref{eq:fixed_m_triangular_approximation} is at most $1/m$ for every
$N\ge N_m$.  Set $m_N=m$ for $N_m\le N<N_{m+1}$.  Then $m_N\to\infty$, and
the triangle inequality gives \eqref{eq:triangular_diagonal_conclusion}.
\end{proof}

\begin{proof}[Proof of Lemma \ref{lem:finite_population_recovery}]
First suppose that $\P\in\widetilde\Pc_S(0,\nu)$.  Apply Lemma
\ref{lem:finite_valued_triangular_approximation} and use the diagonal $m_N$
from \eqref{eq:triangular_diagonal_conclusion}.  For readability, write
$J^i$, $\widehat X^i$, $\widehat{\mathbf P}^N$,
$\widehat\mu^N$, $\overline p_N$, and $\P^{m_N}$ for the corresponding
objects, and set
\[
    p_r:=\E^\P[I_r].
\]
Then
\begin{equation}\label{eq:diagonal_recovery_errors}
\begin{split}
    &\Wc_{2,0}^2(\P^{m_N},\P)
      +\E^{\P^0}[\Wc_{2,0}^2(\widehat{\mathbf P}^N,\P^{m_N})]\\
    &\quad
      +\E^{\P^0}\!\left[
         \int_0^T\Wc_2^2(\widehat\mu^N_r,
             \Lc^{\P^{m_N}}(X_r,I_r))\,dr
       \right]
      +\int_0^T|\overline p_N(r)-p_r|\,dr
      +|\overline p_N(T)-p_T|
      \longrightarrow0.
\end{split}
\end{equation}

The map $r\mapsto p_r$ is non-increasing and right-continuous.  Since
$\P\in\widetilde\Pc_W(0,\nu)$,
\[
    p_r\in\{0\}\cup[\alpha,1],
    \qquad 0\le r<T.
\]
Let
\[
    \tau:=\inf\{r\in[0,T]:p_r=0\},
\]
with $\inf\emptyset=T+1$.  Then $p_r\ge\alpha$ for $r<\tau$ and $r<T$, while
$p_r=0$ for $r\ge\tau$.

Set $k_N:=\lceil\alpha N\rceil$ and
\[
    G_N:=\left\{\sum_{i=1}^N I^i_{0-}\ge k_N\right\}.
\]
Since
$N^{-1}\sum_{i=1}^N\E^{\P^0}[I^i_{0-}]\to\E^\nu[I_{0-}]>\alpha$ and the
indicators are independent, Chebyshev's inequality yields
$\P^0(G_N^c)\to0$.  On $G_N$, let
\[
    \rho_N:=\inf\left\{r<\tau\wedge T:
                  \sum_{i=1}^N J^i_r<k_N\right\},
\]
with $\inf\emptyset=+\infty$.  Since the $J^i$ are finite-valued
adapted survival paths, $\rho_N$ is an $\mathbb F^N$-stopping time.  On
$\{\rho_N<\tau\wedge T\}$, choose an
$\Fc^N_{\rho_N}$-measurable set $A_N\subset\{1,\ldots,N\}$ satisfying
\[
    \#A_N=k_N,
    \qquad
    \{i:J^i_{\rho_N}=1\}\subset A_N
       \subset\{i:J^i_{\rho_N-}=1\}.
\]
Such a set exists because at least $k_N$ particles are alive immediately
before the first strict crossing, whereas fewer than $k_N$ remain alive at the
crossing; select any additional indices in increasing order.  When
$\rho_N=0$, the left limit refers to the prescribed value at $0-$.

Set $I^{i,N}_{0-}:=I^i_{0-}$.  On $G_N^c$, set $I^{i,N}_r:=0$ for
$0\le r\le T$.  On $G_N$, define, for $0\le r<T$,
\[
    I^{i,N}_r:=
    \begin{cases}
       J^i_r,
       & r<\rho_N\wedge\tau,\\
       \mathbf 1_{\{i\in A_N\}},
       & \rho_N\le r<\tau,
         \quad\rho_N<\tau\wedge T,\\
       0,
       & \tau\le r<T,
    \end{cases}
\]
and set
\[
    I^{i,N}_T:=
    \begin{cases}
       0, & \tau<T,\\
       J^i_T, & \tau\ge T.
    \end{cases}
\]
The second case in the first display does not arise when $\rho_N=+\infty$.  The
value at $T$ is kept equal to $J^i_T$ when $\tau\ge T$ because the integral
constraint does not impose a condition at the single time $T$, while the metric
$d_{\I,0}$ distinguishes stopping at $T$ from survival through $T$.  By the
choice of $A_N$, no path is changed from $0$ to $1$ at $\rho_N$; at time $T$,
$J^i_T\le\mathbf 1_{\{i\in A_N\}}$ whenever the second case was used.
Thus the paths are adapted, right-continuous, and non-increasing.  Moreover,
\[
    \frac1N\sum_{i=1}^N I^{i,N}_r
      \in\{0\}\cup[\alpha,1],
    \qquad 0\le r<T.
\]

Let $M_N(r):=N^{-1}\sum_iJ^i_r$.  On $G_N$, for $r<\tau\wedge T$,
\[
    \frac1N\sum_{i=1}^N|I^{i,N}_r-J^i_r|
       =\left(\frac{k_N}{N}-M_N(r)\right)^+,
\]
and, if $\tau<T$, the left-hand side equals $M_N(r)$ for $r\ge\tau$.  Since the
$J^i$ are independent,
\[
    \E^{\P^0}[|M_N(r)-\overline p_N(r)|]\le \frac1{2\sqrt N}.
\]
Using $k_N/N\le\alpha+1/N$, $p_r\ge\alpha$ before $\tau$, and $p_r=0$ after
$\tau$, we obtain
\begin{equation}\label{eq:control_modification_l1}
\begin{split}
    \E^{\P^0}\!\left[
       \frac1N\sum_{i=1}^N\int_0^T
          |I^{i,N}_r-J^i_r|\,dr
    \right]
    \le{}T\P^0(G_N^c)+\frac TN+\frac{T}{2\sqrt N}
    +2\int_0^T|\overline p_N(r)-p_r|\,dr
      \longrightarrow0.
\end{split}
\end{equation}
The terminal time convention also gives
\begin{equation}\label{eq:control_modification_endpoint}
    \E^{\P^0}\!\left[
       \frac1N\sum_{i=1}^N|I^{i,N}_T-J^i_T|
    \right]
    \le \P^0(G_N^c)+\mathbf 1_{\{\tau<T\}}\overline p_N(T)
    \longrightarrow0.
\end{equation}
For two survival paths with the same value at $0-$,
\[
    d_{\I,0}(i,j)
    \le \int_0^T|i_r-j_r|\,dr+|i_T-j_T|.
\]
Since $d_{\I,0}$ is bounded, \eqref{eq:control_modification_l1} and
\eqref{eq:control_modification_endpoint} imply
\begin{equation}\label{eq:control_modification_metric}
    \E^{\P^0}\!\left[
       \frac1N\sum_{i=1}^N
          d_{\I,0}(I^{i,N},J^i)^2
    \right]\longrightarrow0.
\end{equation}

Let $X^{i,N}$ be the interacting system controlled by $I^{i,N}$.  Couple
$X^{i,N}$ with $\widehat X^i$ using the same initial variable and Brownian
motion.  For $t\in[0,T]$, the Lipschitz property of $b$ and $\sigma$, the
Burkholder--Davis--Gundy inequality, and the indexwise coupling of the empirical
measures give
\begin{align*}
    &\E^{\P^0}\!\left[
       \frac1N\sum_{i=1}^N
          \sup_{0\le r\le t}|X^{i,N}_r-\widehat X^i_r|^2
    \right]\\
    &\quad\le
       C\int_0^t\E^{\P^0}\!\left[
          \frac1N\sum_{i=1}^N
             \sup_{0\le u\le r}|X^{i,N}_u-\widehat X^i_u|^2
       \right]dr
       +C\E^{\P^0}\!\left[
          \int_0^T
          \Wc_2^2\!\left(
             \widehat\mu^N_r,
             \Lc^{\P^{m_N}}(X_r,I_r)
          \right)dr
       \right]\\
    &\qquad
       +C\E^{\P^0}\!\left[
          \frac1N\sum_{i=1}^N\int_0^T
             |I^{i,N}_r-J^i_r|\,dr
       \right]
       +C\left(
          \E^{\P^0}\!\left[
             \frac1N\sum_{i=1}^N\int_0^T
                |I^{i,N}_r-J^i_r|\,dr
          \right]
       \right)^{\delta/(2+\delta)}.
\end{align*}
All moment constants in this estimate are uniform in $m_N$ and $N$, because
the controls are bounded and the averaged initial $(2+\delta)$-moments are
uniformly bounded.  The second term tends to zero by
\eqref{eq:diagonal_recovery_errors}; the last two terms tend to zero by
\eqref{eq:control_modification_l1}.  Gronwall's lemma
therefore yields
\begin{equation}\label{eq:interacting_independent_state_difference}
    \E^{\P^0}\!\left[
       \frac1N\sum_{i=1}^N
          \sup_{0\le r\le T}|X^{i,N}_r-\widehat X^i_r|^2
    \right]\longrightarrow0.
\end{equation}
Combining \eqref{eq:diagonal_recovery_errors},
\eqref{eq:control_modification_metric}, and
\eqref{eq:interacting_independent_state_difference} with the indexwise
coupling proves \eqref{eq:recovery_empirical_convergence}.  Since the empirical
survival mass belongs to $\{0\}\cup[\alpha,1]$ for every $r<T$, the threshold
indicator in the finite-population objective may be removed.  The continuity
and uniform-integrability argument used at the end of Lemma
\ref{lem:finite_population_compactness_identification} then gives
\eqref{eq:recovery_objective_convergence}.

For a general $\P\in\widetilde\Pc_W(0,\nu)$, Theorem
\ref{thm:equivalence_value_function}\textnormal{(ii)} allows us to choose
$\P^\ell\in\widetilde\Pc_S(0,\nu)$ such that
$\Wc_{2,0}(\P^\ell,\P)\to0$.  The continuity of $\mathcal F_0$ gives
$\mathcal F_0(\P^\ell)\to\mathcal F_0(\P)$.  For each $\ell$, the strong-case
construction gives strategies $\Ic^{N,\ell}$, with random empirical path
laws denoted by $\mathbf P^{N,\ell}$, for which
\[
    \E^{\P^0}[\Wc_{2,0}^2(\mathbf P^{N,\ell},\P^\ell)]
      +|J_N(\Ic^{N,\ell})-\mathcal F_0(\P^\ell)|
      \longrightarrow0
      \quad\hbox{as }N\to\infty.
\]
Choose $N_\ell>N_{\ell-1}$ so that the last display is at most $1/\ell$ for
all $N\ge N_\ell$, and use $\Ic^{N,\ell}$ when
$N_\ell\le N<N_{\ell+1}$.  The triangle inequality and
$\P^\ell\to\P$ prove \eqref{eq:recovery_empirical_convergence}, while the same
choice and $\mathcal F_0(\P^\ell)\to\mathcal F_0(\P)$ prove
\eqref{eq:recovery_objective_convergence}.
\end{proof}

\begin{proposition}[Subthreshold initial mass]\label{prop:subthreshold_limit}
Assume \eqref{eq:uniform_initial_higher_moment} and
\eqref{eq:average_initial_law_convergence}.  If $p(\nu)<\alpha$, then
\[
    \lim_{N\to\infty}V^N(\nu_1,\ldots,\nu_N)=0=V(0,\nu).
\]
\end{proposition}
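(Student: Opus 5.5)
The mean-field identity $V(0,\nu)=0$ is the subthreshold case already handled in the proof of Theorem~\ref{thm:regularity}: for every admissible $\P$, $\E^\P[I_s]\le p(\nu)<\alpha$, so the threshold factor vanishes and $J_0(\P)=0$. It remains to show $V^N(\nu_1,\ldots,\nu_N)\to0$. The lower bound $V^N\ge0$ comes from stopping every particle at time $0$, which is admissible and gives payoff zero. So the real task is the upper bound $\limsup_N V^N\le0$.

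The finite-population argument rests on a pathwise observation. For any admissible $\Ic^N$, monotonicity of the survival paths gives $q^N_r\le \bar q^N:=N^{-1}\sum_{i=1}^N I^i_{0-}$ for all $r\in[0,T]$. Hence the threshold indicator $\mathbf 1_{[\alpha,1]}(q^N_r)$ vanishes identically on the event $E_N:=\{\bar q^N<\alpha\}$, whatever the strategy. Off $E_N$, I bound the integrand by the growth condition \eqref{eq:f_growth}:
\[
    |f(r,X^{i,N}_r,\mu^N_r)|\le C\Big(1+|X^{i,N}_r|^2+\tfrac1N\textstyle\sum_j|X^{j,N}_r|^2+1\Big).
\]
This yields
\[
    J_N(\Ic^N)\le C\,\E^{\P^0}\!\left[\mathbf 1_{E_N^c}\Big(1+\tfrac1N\textstyle\sum_{i=1}^N\|X^{i,N}\|_\infty^2\Big)\right].
\]

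The key step is to make this bound uniform over strategies. The averaged $(2+\delta)$-moment estimate \eqref{eq:finite_uniform_path_moments} holds uniformly in the strategy, because controls are bounded by $1$ and the constants depend only on the coefficients and \eqref{eq:uniform_initial_higher_moment}. By Jensen, $Z_N:=N^{-1}\sum_i\|X^{i,N}\|_\infty^2$ then satisfies $\sup_{N,\Ic^N}\E[Z_N^{1+\delta/2}]<\infty$. Hölder's inequality gives
\[
    \sup_{\Ic^N}J_N(\Ic^N)\le C\,\P^0(E_N^c)^{\delta/(2+\delta)}.
\]

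Finally, I show $\P^0(E_N^c)\to0$. The indicators $I^i_{0-}$ are independent, and by \eqref{eq:average_initial_law_convergence} their mean $N^{-1}\sum_i p(\nu_i)=p(N^{-1}\sum_i\nu_i)\to p(\nu)<\alpha$, using the continuity of $p$. Chebyshev's inequality (variance at most $1/(4N)$) then gives $\P^0(\bar q^N\ge\alpha)\to0$.

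I expect the main obstacle to be the uniformity of the moment bound \eqref{eq:finite_uniform_path_moments} over all strategies, including those adapted to the full population filtration. It follows from the standard BDG/Gronwall estimate applied to the averaged particle system, where the empirical measure term is controlled by the average second moment. I would write this out briefly and not simply cite it.
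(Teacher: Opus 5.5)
Your proposal is correct and follows essentially the same route as the paper. The pathwise bound $q^N_r\le N^{-1}\sum_i I^i_{0-}$ confines any nonzero payoff to the event $\{N^{-1}\sum_i I^i_{0-}\ge\alpha\}$, whose probability vanishes by Chebyshev. A strategy-uniform $(1+\delta/2)$-moment bound on $1+N^{-1}\sum_i\|X^{i,N}\|_\infty^2$, obtained via Jensen, combined with H\"older then gives $\sup_{\Ic^N}J_N(\Ic^N)\le C\,\P^0(\cdot)^{\delta/(2+\delta)}\to0$, and stopping every particle at time $0$ gives $V^N\ge0$; you also spell out the step $p(N^{-1}\sum_i\nu_i)\to p(\nu)$, via the $1$-Lipschitz property of $p$ under $\Wc_2$, which the paper leaves implicit.
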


\begin{proof}
For every weak stopping rule starting from $\nu$,
$\E[I_r]\le p(\nu)<\alpha$ for all $r\in[0,T]$.  Hence its thresholded
objective is zero, and therefore $V(0,\nu)=0$.

For the finite system, $q^N_r\le q^N_{0-}$ for every $r$.  Thus the finite
objective can be nonzero only on
\[
    G_N:=\left\{\frac1N\sum_{i=1}^NI^i_{0-}\ge\alpha\right\}.
\]
The expectation of the average inside $G_N$ converges to $p(\nu)$, and its
variance is at most $1/(4N)$.  Since $p(\nu)<\alpha$, Chebyshev's inequality
gives $\P^0(G_N)\to0$.  The moment estimate for the interacting system and the
growth bound on $f$ yield, uniformly over all finite-population strategies,
\[
    \E^{\P^0}\!\left[
       \left(1+\frac1N\sum_{i=1}^N\|X^{i,N}\|_\infty^2\right)^{1+\delta/2}
    \right]\le C.
\]
H\"older's inequality therefore gives
\[
    \sup_{\Ic^N}|J_N(\Ic^N)|
    \le C\P^0(G_N)^{\delta/(2+\delta)}\longrightarrow0.
\]
Stopping every particle at time $0$ gives payoff zero, so the values converge
to zero.
\end{proof}

\begin{theorem}[Finite-population limit]\label{thm:limit_theory}
Assume that, for some $\delta>0$,
\begin{equation}
    \sup_{N\ge1}\frac1N\sum_{i=1}^N
       \int_{\Sbf}|x|^{2+\delta}\,\nu_i(dx,di)<\infty,
       \qquad
    \Wc_2\!\left(\frac1N\sum_{i=1}^N\nu_i,\nu\right)
    \longrightarrow0,
\end{equation}
and that $\E^\nu[I_{0-}]>\alpha$.  Then
\begin{equation}\label{eq:value_limit}
    \lim_{N\to\infty}V^N(\nu_1,\ldots,\nu_N)=V(0,\nu).
\end{equation}

Let $\eps_N\downarrow0$ and let $\Ic^N$ be $\eps_N$-optimal for the
finite-population problem.  Replace $\Ic^N$ by the cutoff strategy from Lemma
\ref{lem:finite_population_cutoff}, and let $\mathbf P^N$ be its random
empirical path law.  The laws of $\mathbf P^N$ are tight on
$(\Pc_2(\Omega_0),\Wc_{2,0})$, and every limit law $\Gamma$ satisfies
\begin{equation}\label{eq:optimal_limit_support}
    \Gamma\!\left(
       \left\{Q\in\widetilde\Pc_W(0,\nu):
          \mathcal F_0(Q)=V(0,\nu)\right\}
    \right)=1.
\end{equation}

Conversely, for every
$\P^*\in\widetilde\Pc_W(0,\nu)$ satisfying
$\mathcal F_0(\P^*)=V(0,\nu)$, there is a sequence of admissible
finite-population strategies such that
\begin{equation}\label{eq:optimizer_recovery}
    \E^{\P^0}[\Wc_{2,0}^2(\mathbf P^N,\P^*)]\longrightarrow0,
    \qquad
    V^N(\nu_1,\ldots,\nu_N)-J_N(\Ic^N)\longrightarrow0.
\end{equation}
\end{theorem}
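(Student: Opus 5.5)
The plan is to prove the value convergence \eqref{eq:value_limit} as a pair of inequalities, $\liminf$ and $\limsup$, and then read off the statements about optimizers from the same arguments.

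\emph{Lower bound.} By Theorem~\ref{thm:existence}(ii) there is an optimal $\P^*\in\widetilde\Pc_W(0,\nu)$. By Lemma~\ref{lem:constraint_reduction} it satisfies $\mathcal F_0(\P^*)=V(0,\nu)$. Since $p(\nu)>\alpha$, Lemma~\ref{lem:finite_population_recovery} gives admissible strategies $\Ic^N$ with
\[
J_N(\Ic^N)\to\mathcal F_0(\P^*)
\qquad\text{and}\qquad
\E^{\P^0}[\Wc_{2,0}^2(\mathbf P^N,\P^*)]\to0 .
\]
As $V^N\ge J_N(\Ic^N)$, this yields $\liminf_N V^N\ge V(0,\nu)$.

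\emph{Upper bound.} Take $\eps_N$-optimal strategies $\Ic^N$ and replace them by their cutoffs from Lemma~\ref{lem:finite_population_cutoff}. This does not change the payoff, the empirical survival mass now lies in $\{0\}\cup[\alpha,1]$, and \eqref{eq:finite_objective_as_random_functional} gives
\[
J_N(\Ic^N)=\E^{\P^0}[\mathcal F_0(\mathbf P^N)].
\]
Choose a subsequence realizing $\limsup_N V^N$. Lemma~\ref{lem:finite_population_compactness_identification} gives tightness, so along a further subsequence $\Lc(\mathbf P^N)\Rightarrow\Gamma$ with $\Gamma(\widetilde\Pc_W(0,\nu))=1$, and
\[
\E[\mathcal F_0(\mathbf P^N)]\to\int\mathcal F_0\,d\Gamma .
\]
Every $Q\in\widetilde\Pc_W(0,\nu)$ has $\mathcal F_0(Q)=J_0(Q)\le V(0,\nu)$ by Lemma~\ref{lem:constraint_reduction} (viewing $Q$ as a law on $\Om_0$). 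Hence $\int\mathcal F_0\,d\Gamma\le V(0,\nu)$, and therefore $\limsup_N V^N\le V(0,\nu)$. Together with the lower bound this proves \eqref{eq:value_limit}.

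\emph{Optimal limit laws.} With the limit of the values identified, apply the upper-bound argument along an arbitrary convergent subsequence of the near-optimal $\mathbf P^N$. It gives
\[
\int\mathcal F_0\,d\Gamma=\lim_N J_N(\Ic^N)=\lim_N V^N=V(0,\nu),
\]
while $\mathcal F_0\le V(0,\nu)$ holds $\Gamma$-a.s. on $\widetilde\Pc_W(0,\nu)$. The integrand is bounded above by the value and integrates to the value, so it equals $V(0,\nu)$ for $\Gamma$-a.e.\ $Q$. This is \eqref{eq:optimal_limit_support}. Tightness of the laws of the whole sequence was already stated in Lemma~\ref{lem:finite_population_compactness_identification}.

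\emph{Recovery of optimizers.} For the converse \eqref{eq:optimizer_recovery}, take any optimizer $\P^*$ and apply Lemma~\ref{lem:finite_population_recovery}. It gives the Wasserstein convergence directly, and
\[
J_N(\Ic^N)\to\mathcal F_0(\P^*)=V(0,\nu)=\lim_N V^N ,
\]
so $V^N-J_N(\Ic^N)\to0$.

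The only delicate point is the upper bound. It requires that $\mathcal F_0$ coincide with $J_0$ on the support of $\Gamma$, which holds because the constraint $H_0=0$ passes to the limit (continuity of $H_0$), and that the payoffs be uniformly integrable. Both are supplied by Lemma~\ref{lem:finite_population_compactness_identification}, so the remaining work is bookkeeping of subsequences.
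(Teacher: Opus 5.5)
Your proposal is correct and follows essentially the same route as the paper: the cutoff lemma combined with Lemma~\ref{lem:finite_population_compactness_identification} gives the upper bound and the support of the optimal limit laws, and Lemma~\ref{lem:finite_population_recovery} gives the lower bound and the recovery of optimizers. The only cosmetic difference is in the lower bound, where you first pick an optimizer via Theorem~\ref{thm:existence} (legitimate, since $f$ is assumed continuous in this section), whereas the paper applies the recovery lemma to every element of $\widetilde\Pc_W(0,\nu)$ and takes the supremum, so it does not need existence.
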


\begin{proof}
Applying Lemma \ref{lem:finite_population_recovery} to each element of
$\widetilde\Pc_W(0,\nu)$ and then taking the supremum gives
\[
    V(0,\nu)\le\liminf_{N\to\infty}V^N(\nu_1,\ldots,\nu_N).
\]
For the upper bound, choose $N^{-1}$-optimal finite-population
strategies and apply the cutoff lemma.  Along a subsequence realizing the
limsup, Lemma \ref{lem:finite_population_compactness_identification} gives a
limit law $\Gamma$ concentrated on $\widetilde\Pc_W(0,\nu)$, and
\[
\begin{split}
    \limsup_{N\to\infty}V^N(\nu_1,\ldots,\nu_N)
    &=\int\mathcal F_0(Q)\,\Gamma(dQ)\\
    &\le V(0,\nu).
\end{split}
\]
This proves \eqref{eq:value_limit}.

For an $\eps_N$-optimal sequence, every subsequential limit $\Gamma$ satisfies
\[
    \int\mathcal F_0(Q)\,\Gamma(dQ)=V(0,\nu),
    \qquad
    \mathcal F_0(Q)\le V(0,\nu)
    \quad\Gamma\hbox{-a.s.}
\]
Hence equality holds $\Gamma$-almost surely, proving
\eqref{eq:optimal_limit_support}.  Applying Lemma
\ref{lem:finite_population_recovery} to $\P^*$ and using
\eqref{eq:value_limit} gives \eqref{eq:optimizer_recovery}.
\end{proof}

\begin{remark}
The probability measure
\[
    \frac1N\sum_{i=1}^N
       \Lc^{\P^0}(X^{i,N},W^i,I^{i,N})
    =\E^{\P^0}[\mathbf P^N]
\]
need not be a weak stopping rule when the finite-population controls may use the
whole filtration $\mathbb F^N$.  Such controls can use one particle's Brownian
motion as a common random signal.  Conditional on that signal, the empirical
measure may converge to one of several admissible weak laws, while its
unconditional average generally does not solve the McKean--Vlasov equation
with its own marginal flow.  This is why Theorem
\ref{thm:limit_theory} is stated in terms of the law of the random
empirical measure and why its limit is a probability measure supported on weak
optimizers.
\end{remark}

\begin{remark}[The threshold case]
The strict inequality $p(\nu)>\alpha$ cannot in general be replaced by
$p(\nu)=\alpha$.  Suppose $\alpha\in(0,1)$, $b=\sigma=0$, $f\equiv1$, and
\[
    \nu=\alpha\delta_{(0,1)}+(1-\alpha)\delta_{(0,0)},
\]
with i.i.d. initial pairs of law $\nu$.  The mean-field value is
$V(0,\nu)=\alpha T$.  In the finite system it is optimal to keep every initially
alive particle alive, and therefore
\[
    V^N
    =T\,\E\!\left[
       \frac1N\sum_{i=1}^NI^i_{0-}
       \mathbf 1_{[\alpha,1]}\!\left(
          \frac1N\sum_{i=1}^NI^i_{0-}
       \right)
    \right]
    \longrightarrow \frac{\alpha T}{2}
\]
by the central limit theorem.  Thus no unconditional value-convergence theorem
is valid at the threshold without an additional one-sided feasibility
assumption on the initial empirical survival mass.
\end{remark}
\section{Dependence of the value function on the threshold}

For $\alpha\in[0,1]$ and $A\in\{W,S\}$, define
\begin{align}
    V_A^\alpha(t,\nu)
    &:={\sup}_{\P\in\Pc_A(t,\nu)}
      \E^\P\!\left[
        \int_t^T f(s,X_s,\mu_s^\P)I_s
        \mathbf 1_{[\alpha,1]}\!\left(\E^\P[I_s]\right)ds
      \right],
      \label{eq:closed_alpha_value}
      \\
    V_{\alpha,A}(t,\nu)
    &:={\sup}_{\P\in\Pc_A(t,\nu)}
      \E^\P\!\left[
        \int_t^T f(s,X_s,\mu_s^\P)I_s
        \mathbf 1_{(\alpha,1]}\!\left(\E^\P[I_s]\right)ds
      \right].
      \label{eq:open_alpha_value}
\end{align}
Throughout this section, write $\mu_s^\P=\Lc^\P(X_s,I_s)$ and
$p_s^\P:=\E^\P[I_s]$.  Applying Theorem
\ref{thm:equivalence_value_function} at each threshold gives
$V_W^\alpha=V_S^\alpha$; we denote their common value by $V^\alpha$.
At $\alpha=0$, the two indicators in
\eqref{eq:closed_alpha_value}--\eqref{eq:open_alpha_value} yield the same
objective because $p_s^\P=0$ implies $I_s=0$, $\P$-a.s.

Lemma \ref{lem:constraint_reduction} has the following open-threshold
analogue.

\begin{lemma}
\label{lem:open_constraint_reduction}
For $A\in\{W,S\}$,
\begin{equation}
\label{eq:open_constraint_reduction}
    V_{\alpha,A}(t,\nu)
    =
    \sup\left\{
       \mathcal F_t(\P^t):
       \P\in\Pc_A(t,\nu),\quad
       p_s^\P\in\{0\}\cup(\alpha,1]
       \text{ for every }s\in[t,T]
    \right\}.
\end{equation}
Likewise, Lemma \ref{lem:constraint_reduction} may be written in the
pointwise form
\begin{equation}
\label{eq:closed_constraint_pointwise}
    V_A^\alpha(t,\nu)
    =
    \sup\left\{
       \mathcal F_t(\P^t):
       \P\in\Pc_A(t,\nu),\quad
       p_s^\P\in\{0\}\cup[\alpha,1]
       \text{ for every }s\in[t,T]
    \right\}.
\end{equation}
\end{lemma}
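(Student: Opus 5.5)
The plan mirrors the proof of Lemma~\ref{lem:constraint_reduction}: a deterministic cutoff that never lowers the payoff and places the survival flow in the required set. The one new point for the open threshold is to use a non-strict inequality in the cutoff.

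\textbf{Easy inequality.} Take $\P\in\Pc_A(t,\nu)$ with $p_s^\P\in\{0\}\cup(\alpha,1]$ for every $s\in[t,T]$. For each $s$, either $p_s^\P=0$, in which case $I_s=0$ $\P$-a.s., or $p_s^\P>\alpha$, in which case the indicator $\mathbf 1_{(\alpha,1]}(p_s^\P)$ equals one. Hence
\[
I_s\mathbf 1_{(\alpha,1]}(p_s^\P)=I_s \qquad ds\otimes d\P\text{-a.e.},
\]
so the open-threshold objective equals $\mathcal F_t(\P^t)$. This gives ``$\ge$'' in \eqref{eq:open_constraint_reduction}.

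\textbf{Reverse inequality.} Fix an arbitrary $\P\in\Pc_A(t,\nu)$ and define the deterministic cutoff
\[
\theta:=\inf\{s\in[t,T]:p_s^\P\le\alpha\},\qquad \inf\varnothing:=T+1.
\]
Set $\bar I_s:=I_s\mathbf 1_{\{s<\theta\}}$ on $[t,T]$ and solve the McKean--Vlasov equation \eqref{eq:cutoff_dynamics} with $\bar I$, exactly as in Lemma~\ref{lem:constraint_reduction}. Because $\theta$ is deterministic, strong admissibility is preserved when $A=S$. Pathwise uniqueness gives $\bar X=X$ and $\bar\mu=\mu$ on $[t,\theta)$.

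For $s<\theta$ we have $p_s>\alpha$, so the indicator is one. For $s\ge\theta$ the modified process satisfies $\bar I_s=0$. On the original side, right-continuity and monotonicity of $p$ give $p_\theta\le\alpha$ (when $\theta\le T$), and hence $p_s\le\alpha$ for all $s\ge\theta$, where the open indicator vanishes. The two objectives therefore coincide; unlike the closed case, they agree even at $s=\theta$.

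The new flow is $p_s$ for $s<\theta$ and $0$ for $s\ge\theta$. It lies in $\{0\}\cup(\alpha,1]$ for every $s\in[t,T]$, including $s=T$, so $\bar\P$ is a competitor in the right-hand side of \eqref{eq:open_constraint_reduction}.

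\textbf{The closed pointwise form.} For \eqref{eq:closed_constraint_pointwise}, the cutoff $\bar\P$ built in Lemma~\ref{lem:constraint_reduction}, with strict inequality $p_s<\alpha$ in \eqref{eq:deterministic_cutoff}, already has flow in $\{0\}\cup[\alpha,1]$ for every $s$. Indeed, for $s<\theta$ we have $p_s\ge\alpha$ by the definition of $\theta$, and the flow is zero afterwards. This also covers $s=T$. The reverse inequality is as in the easy step above: on such laws, $I_s\mathbf 1_{[\alpha,1]}(p_s)=I_s$ for every $s$.

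\textbf{Main obstacle.} The delicate points are at the cutoff time itself. First, choosing $\le$ rather than $<$ in the open case requires verifying $p_\theta\le\alpha$, which follows from right-continuity. Second, for $\theta>T$ the requirement ``for every $s$'' forces a check at $s=T$; this holds because $\theta=T+1$ means $p_T>\alpha$ (respectively $p_T\ge\alpha$ in the closed case). Both checks are short.
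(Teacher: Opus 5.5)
Your proposal is correct and follows the paper's proof. In both, every surviving particle is stopped at the deterministic time $\inf\{s:p_s^\P\le\alpha\}$ (with $<\alpha$ for the closed form); the objective is unchanged, the new survival flow lies in the required set, and the deterministic cutoff preserves strong admissibility. You additionally write out the easy inclusion and the checks at $s=\theta$ and $s=T$, which the paper leaves implicit.
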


\begin{proof}
We prove \eqref{eq:open_constraint_reduction}.  The same cutoff argument,
with ``$\le\alpha$'' replaced by ``$<\alpha$'', gives
\eqref{eq:closed_constraint_pointwise}.

Fix $\P\in\Pc_A(t,\nu)$.  Since $s\mapsto p_s^\P$ is nonincreasing and
right-continuous, the time
\[
    \tau:=\inf\{s\in[t,T]:p_s^\P\le\alpha\},
\]
with the convention $\inf\varnothing=T+1$, is deterministic.  Stop every
remaining particle at $\tau$, and let the state solve
\eqref{eq:dynamics} with this modified survival process and the same initial
state and Brownian motion.  The modified and original systems agree before
$\tau$.  From $\tau$ onward the open-threshold indicator in the original
objective is zero, while the modified survival mass is zero.  Hence the
objective is unchanged.  The modified survival mass belongs to
$\{0\}\cup(\alpha,1]$ at every time.  A deterministic cutoff preserves strong
adaptedness, so the argument applies to both $A=W$ and $A=S$.
\end{proof}

Small perturbations of the survival process satisfy the following stability
property.

\begin{lemma}
\label{lem:small_survival_perturbation}
For each $k$, let $(X^k,W^k,I^k)$ and
$(\bar X^k,W^k,\bar I^k)$ be two coupled weak solutions on a common
probability space $(\Omega^k,\mathcal H^k,\mathbb Q_k)$.  Assume that
$(X_t^k,I_{t-}^k)=(\bar X_t^k,\bar I_{t-}^k)$, $\mathbb Q_k$-a.s., and that
this common initial pair has law $\nu$.  Suppose also that both solutions are
driven by the same Brownian motion.  Let $\P_k$ and $\bar\P_k$ be their
canonical laws, and assume
that
\[
    \mathbb Q_k\big(I^k_{|\{t-\}\cup[t,T]}
       \ne \bar I^k_{|\{t-\}\cup[t,T]}\big)\longrightarrow0.
\]
Then
\[
    \Wc_{2,t}(\P_k^t,\bar\P_k^t)\longrightarrow0.
\]
\end{lemma}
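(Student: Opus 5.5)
Since both projected laws $\P_k^t$ and $\bar\P_k^t$ live on the common space $(\Omega^k,\mathcal H^k,\mathbb Q_k)$, we use that space as the coupling. This gives
\[
    \Wc_{2,t}^2(\P_k^t,\bar\P_k^t)
    \le \E^{\mathbb Q_k}\Big[\|X^k-\bar X^k\|_{\infty,[t,T]}^2
      +d_{\I,t}(I^k,\bar I^k)^2\Big],
\]
because the Brownian coordinates coincide. Set $E_k:=\{I^k\ne\bar I^k \text{ on } \{t-\}\cup[t,T]\}$ and $e_k:=\mathbb Q_k(E_k)\to0$. The metric $d_{\I,t}$ is bounded by $T+2$, so the survival term is at most $(T+2)^2e_k\to0$. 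It remains to control the state term.

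For the states, subtract the two state equations and split the coefficient differences exactly as in the proof of Lemma~\ref{lem:approximate_by_discrete_stopping}:
\[
    b(r,X_r,\mu_r)I_r-b(r,\bar X_r,\bar\mu_r)\bar I_r
    =\big(b(r,X_r,\mu_r)-b(r,\bar X_r,\bar\mu_r)\big)I_r
     +b(r,\bar X_r,\bar\mu_r)(I_r-\bar I_r),
\]
and similarly for $\sigma$. The measure arguments are coupled through the same space, which gives
$\Wc_2^2(\mu_r,\bar\mu_r)\le\E|X_r-\bar X_r|^2+\E|I_r-\bar I_r|$ and $\E|I_r-\bar I_r|\le e_k$. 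Applying BDG, the Lipschitz bound and the growth bound then yields
\[
    \Delta_k(u):=\E\sup_{s\in[t,u]}|X_s-\bar X_s|^2
    \le C\int_t^u\Delta_k(r)\,dr+Ce_k
      +C\,\E\Big[\big(1+\sup_{r}|\bar X_r|^2+M_2(\bar\mu_r)\big)\mathbf 1_{E_k}\Big].
\]
Here $I_r-\bar I_r$ vanishes off $E_k$, and we use $|I_r-\bar I_r|^2\le\mathbf 1_{E_k}$ for the $\sigma$-term. Gronwall's lemma then reduces the claim to showing that the last expectation tends to zero.

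This last term is the main obstacle. The initial law $\nu$ is fixed, but the solutions vary with $k$, so we need uniform integrability of $\sup_r|\bar X^k_r|^2$ over $k$; a plain second-moment bound is not enough. We obtain it by the truncation argument already used in Lemma~\ref{lem:value_initial_stability} and Lemma~\ref{lem:projected_law_compactness}. Replace the initial state by $\chi_R(X_t)$ and solve with the same $(W^k,\bar I^k)$. The Lipschitz estimate bounds the $L^2$ error by $C\int|x-\chi_R(x)|^2\nu(dx,di)$, uniformly in $k$. For bounded initial data, BDG and Gronwall give a fourth-moment bound $C_R$. Consequently
\[
    \E\big[\sup_r|\bar X_r|^2\mathbf 1_{E_k}\big]
    \le C\int|x-\chi_R(x)|^2\,d\nu+C_R^{1/2}e_k^{1/2}.
\]
Taking $\limsup_{k}$ and then letting $R\to\infty$ shows that this term vanishes. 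The term $M_2(\bar\mu_r)$ is deterministic and bounded by Lemma~\ref{lem:moment_estimate}, so its contribution is at most $Ce_k$. Together these give $\Delta_k(T)\to0$, and hence $\Wc_{2,t}(\P_k^t,\bar\P_k^t)\to0$.
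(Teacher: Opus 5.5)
Your proof is correct and follows the paper's argument: you couple the two laws on the common space, use Lipschitz, BDG and Gronwall, and reduce the claim to uniform integrability of the squared path norms on the small event $E_k$. The only difference is in that last step. The paper obtains the uniform integrability by noting that all the projected laws lie in the compact set $\mathfrak K(t,\nu)$ of Lemma~\ref{lem:projected_law_compactness}, whereas you re-derive it directly with the truncation argument on which that compactness rests.
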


\begin{proof}
Let $A_k$ be the event in the statement.  The Lipschitz assumptions on
$b$ and $\sigma$, the BDG inequality, and the coupling bound for the law
argument give
\begin{align*}
    \E^{\mathbb Q_k}\!\left[\sup_{u\in[t,s]}|X_u^k-\bar X_u^k|^2\right]
    \le{}&
    C\int_t^s
      \E^{\mathbb Q_k}\!\left[\sup_{v\in[t,r]}|X_v^k-\bar X_v^k|^2\right]dr
    \\
    &+C\E^{\mathbb Q_k}\!\left[
       \left(1+\|X^k\|_{\infty,[t,T]}^2
               +\|\bar X^k\|_{\infty,[t,T]}^2\right)
       \mathbf 1_{A_k}
    \right].
\end{align*}
Indeed,
\[
    \Wc_2^2\!\left(
       \Lc(X_r^k,I_r^k),\Lc(\bar X_r^k,\bar I_r^k)
    \right)
    \le
    \E^{\mathbb Q_k}\!\left[|X_r^k-\bar X_r^k|^2+|I_r^k-\bar I_r^k|\right],
\]
and $|I_r^k-\bar I_r^k|\le\mathbf 1_{A_k}$.  All the projected laws in this
argument belong to the compact set $\mathfrak K(t,\nu)$; hence their squared
path norms are uniformly integrable.  The last term therefore tends to zero.
Gronwall's lemma yields
\[
    \E^{\mathbb Q_k}\!\left[\sup_{s\in[t,T]}|X_s^k-\bar X_s^k|^2\right]\longrightarrow0.
\]
Moreover, $d_{\I,t}(I^k,\bar I^k)$ is bounded by a deterministic constant on
$A_k$ and vanishes on $A_k^c$.  The coupling of the two systems therefore
gives
$\Wc_{2,t}(\P_k^t,\bar\P_k^t)\to0$.
\end{proof}

\begin{theorem}
\label{thm:open_strong_weak_equivalence}
Under Assumption \ref{ass:coefficient_reward}, for every
$(t,\nu,\alpha)\in[0,T]\times\Pc_2(\Sbf)\times[0,1]$,
\[
    V_{\alpha,S}(t,\nu)=V_{\alpha,W}(t,\nu).
\]
Their common value is denoted by $V_\alpha(t,\nu)$.
\end{theorem}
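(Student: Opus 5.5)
The inequality $V_{\alpha,S}\le V_{\alpha,W}$ is immediate from $\Pc_S(t,\nu)\subseteq\Pc_W(t,\nu)$. For the reverse inequality we first reduce with Lemma~\ref{lem:open_constraint_reduction}. This lets us fix a weak rule $\P\in\Pc_W(t,\nu)$ with $p_s^\P\in\{0\}\cup(\alpha,1]$ for every $s\in[t,T]$, whose objective equals $\mathcal F_t(\P^t)$. The case $t=T$ is trivial, since both values vanish. The case $\alpha=0$ reduces to the closed threshold, where Theorem~\ref{thm:equivalence_value_function} applies. So assume $t<T$ and $\alpha\in(0,1]$. The goal is to produce strong rules $\P_m\in\Pc_S(t,\nu)$ whose survival masses also lie in $\{0\}\cup(\alpha,1]$ and satisfy $\Wc_{2,t}(\P_m^t,\P^t)\to0$. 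Then Lemma~\ref{lem:reward_lsc} gives $\mathcal F_t(\P^t)\le\liminf_m\mathcal F_t(\P_m^t)\le V_{\alpha,S}(t,\nu)$, and taking the supremum over $\P$ finishes the proof.

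To construct $\P_m$ we rerun the chain of Lemmas~\ref{lem:approximate_by_discrete_stopping}, \ref{lem:equivalence_discrete_stopping}, and \ref{lem:restore_dynamics} and check that the open constraint survives each step. The rounding $\tau^m$ (rounding $\tau$ up to the grid) gives $\E^\P[I^m_s]=\E^\P[I_{t-}]$ on $[t,t_1^m)$ and $\E^\P[I^m_s]=p_{t_i^m}^\P$ on $[t_i^m,t_{i+1}^m)$. Both values lie in $\{0\}\cup(\alpha,1]$, because the pointwise constraint holds at every grid time including $t$, and $\E[I_{t-}]\ge p_t^\P$ with $\E[I_{t-}]>\alpha$ or $p_t^\P=0$. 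One subtlety: if $\E[I_{t-}]\in(0,\alpha]$ and $p_t^\P=0$, we instead take $\tau^m:=t$, exactly as in the proof of Lemma~\ref{lem:approximate_by_discrete_stopping}. The terminal time $T$ must also be handled pointwise: set $\E[I^m_T]=p_T^\P$, which holds because $\tau^m\le T$ iff $\tau\le T$. The Brownian purification of Lemma~\ref{lem:equivalence_discrete_stopping} preserves every one-dimensional marginal law of the survival process, so $\E[\widetilde I^m_s]=\E[I^m_s]$ for all $s$. Restoring the dynamics in Lemma~\ref{lem:restore_dynamics} does not change the survival process either. Hence $\widehat\P_m\in\Pc_S(t,\nu)$ satisfies the open constraint pointwise, and \eqref{eq:strong_projected_law_convergence} gives the required convergence.

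The main obstacle is making sure the open constraint is genuinely pointwise at every $s$, not merely almost everywhere. Unlike the closed case, we cannot rely on the integral functional $H_t$, because the value exactly $\alpha$ is now forbidden; the constraint must hold pointwise at the grid times, at $t$ itself, and at $T$. The argument above handles this through the explicit piecewise-constant structure of $I^m$. If some boundary time caused trouble, a fallback is to apply Lemma~\ref{lem:open_constraint_reduction}'s cutoff to the approximating strong rule. That cutoff is deterministic, so it preserves strong adaptedness; it never increases the objective computed with the open indicator; and it equals the identity whenever the constraint already holds.
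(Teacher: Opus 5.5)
Your proof is correct, but it takes a different route from the paper's.

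\textbf{The paper's route.} The paper does not reopen the approximation chain. It first disposes of the trivial cases $p(\nu)\le\alpha$ and $t=T$. It then fixes an open-admissible weak law $\P$ and adjoins an independent time-$t$ uniform variable $U$. Using it, it revives a fraction $q$ of the initially alive particles until $\zeta=\inf\{s:p_s^\P=0\}$, through
\[
I^q_s=I_s\vee\bigl(I_{t-}\mathbf 1_{\{U\le q\}}\mathbf 1_{\{s<\zeta\}}\bigr).
\]
This lifts the survival mass above $\alpha+q(p(\nu)-\alpha)$ before $\zeta$. So $\P^q$ is admissible for the \emph{closed} constraint at a strictly larger level. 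Theorem~\ref{thm:equivalence_value_function}(ii) is then used as a black box at that level. The closed class at the higher level lies inside the open class at level $\alpha$, so Lemma~\ref{lem:reward_lsc} gives $V_{\alpha,S}(t,\nu)\ge\mathcal F_t((\P^q)^t)$. Lemma~\ref{lem:small_survival_perturbation} and Lemma~\ref{lem:reward_lsc} then let $q\downarrow0$.

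\textbf{Your route.} You instead check by hand that the pointwise open constraint survives each step of the chain:
\begin{itemize}
\item after rounding, the mass function is piecewise constant with values $p(\nu)$ and $p^\P_{t_i^m}$, and equals $p^\P_T$ at $T$;
\item purification preserves all one-dimensional marginals of the survival process;
\item restoration leaves the survival process unchanged.
\end{itemize}
Your special handling of $\E[I_{t-}]\in(0,\alpha]$ covers exactly the case the paper dismisses as trivial.

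\textbf{Comparison.} Your route avoids the extra randomization, Lemma~\ref{lem:small_survival_perturbation}, and the second limit in $q$. It also gives density of strong open-admissible laws among weak open-admissible ones in a single step. The cost is that it relies on the internal structure of the proofs of Lemmas~\ref{lem:approximate_by_discrete_stopping}--\ref{lem:restore_dynamics}, not only on the statement of Theorem~\ref{thm:equivalence_value_function}(ii). The paper's ``lift strictly above the threshold'' perturbation is modular, and it is reused in Theorem~\ref{thm:alpha_dependence}(iii).

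Your fallback via the cutoff of Lemma~\ref{lem:open_constraint_reduction} is unnecessary. On its own it would not guarantee convergence to $\P^t$.
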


\begin{proof}
Clearly, $V_{\alpha,S}\le V_{\alpha,W}$.  If
$p(\nu)\le\alpha$, then $p_s^\P\le p(\nu)\le\alpha$ for every admissible law,
so both values are zero; the case $t=T$ is equally clear.  We may therefore
assume that $t<T$ and $p(\nu)>\alpha$.

Fix a weak law from the class on the right-hand side of
\eqref{eq:open_constraint_reduction}, and denote it by $\P$.  Let
\[
    \zeta:=\inf\{s\in[t,T]:p_s^\P=0\},
\]
with $\zeta=T+1$ if the set is empty.  Enlarge the time-$t$ sigma-field by a
random variable $U\sim\mathrm{Unif}(0,1)$ independent of the original
system.  The Brownian increments after $t$ remain independent of the enlarged
initial sigma-field, so Brownianity is preserved in the enlarged filtration.
For $q\in(0,1)$, set $I_{t-}^q=I_{t-}$ and
\[
    I_s^q
    :=I_s\vee
      \left(I_{t-}\mathbf 1_{\{U\le q\}}\mathbf 1_{\{s<\zeta\}}\right),
    \qquad s\in[t,T].
\]
Let $X^q$ solve \eqref{eq:dynamics} with $I^q$, and denote the resulting
canonical law by $\P^q$.  For $s<\zeta$,
\[
    \E[I_s^q]
    =(1-q)p_s^\P+q p(\nu)
    >\alpha+q\big(p(\nu)-\alpha\big),
\]
whereas $\E[I_s^q]=0$ for $s\ge\zeta$.  Hence $\P^q$ is admissible for the
closed constraint at level
$\alpha+q(p(\nu)-\alpha)$.

The paths $I^q$ and $I$ differ with probability at most $q p(\nu)$.
Lemma \ref{lem:small_survival_perturbation} therefore gives
$\Wc_{2,t}((\P^q)^t,\P^t)\to0$ as $q\downarrow0$.  By Lemma
\ref{lem:reward_lsc},
\[
    \mathcal F_t(\P^t)
    \le \liminf_{q\downarrow0}\mathcal F_t((\P^q)^t).
\]
For fixed $q$, Theorem \ref{thm:equivalence_value_function}\textnormal{(ii)},
applied at the threshold $\alpha+q(p(\nu)-\alpha)$, provides strong laws
admissible for that closed constraint whose projections on $\Om_t$ converge to
$(\P^q)^t$.
Those laws also satisfy the open constraint at level $\alpha$.  Applying Lemma
\ref{lem:reward_lsc} once more gives
\[
    V_{\alpha,S}(t,\nu)\ge \mathcal F_t((\P^q)^t).
\]
Letting $q\downarrow0$ and then taking the supremum over $\P$ proves the reverse
inequality.
\end{proof}

\begin{proposition}[Dynamic programming for the open threshold]
\label{prop:open_threshold_dpp}
Under Assumption \ref{ass:coefficient_reward}, for $t\le s\le T$,
\begin{align}
    V_{\alpha,W}(t,\nu)
    ={}&\sup_{\P\in\Pc_W(t,\nu)}
       \left\{
       \E^\P\!\left[
          \int_t^s f(r,X_r,\mu_r^\P)I_r
          \mathbf 1_{(\alpha,1]}(p_r^\P)dr
       \right]
       +V_{\alpha,W}(s,m_{s-}^\P)
       \right\}
       \label{eq:dpp_open_pre}
       \\
    ={}&\sup_{\P\in\Pc_W(t,\nu)}
       \left\{
       \E^\P\!\left[
          \int_t^s f(r,X_r,\mu_r^\P)I_r
          \mathbf 1_{(\alpha,1]}(p_r^\P)dr
       \right]
       +V_{\alpha,W}(s,m_s^\P)
       \right\}.
       \label{eq:dpp_open_post}
\end{align}
The same identities hold with $W$ replaced by $S$.
\end{proposition}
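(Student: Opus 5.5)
I will follow the proof of Theorem~\ref{thm:dpp}, replacing the closed indicator by the open one throughout. Write
\[
    R^{\mathrm o}_{t,s}(\P):=\E^\P\!\left[\int_t^s f(r,X_r,\mu_r^\P)I_r\mathbf 1_{(\alpha,1]}(p_r^\P)\,dr\right],
\]
and let $J^{\mathrm o}_t$ be the corresponding open-threshold objective on $[t,T]$. The plan has four steps.

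\emph{Step 1: the ``$\le$'' inequality in \eqref{eq:dpp_open_pre}.} For $\P\in\Pc_W(t,\nu)$, the same law, read from time $s$ onward, lies in $\Pc_W(s,m_{s-}^\P)$. Its survival mass on $[s,T]$ is still $p_r^\P$, so the open indicator is the same function of $r$. Hence
\[
    J^{\mathrm o}_t(\P)=R^{\mathrm o}_{t,s}(\P)+J^{\mathrm o}_s(\P)\le R^{\mathrm o}_{t,s}(\P)+V_{\alpha,W}(s,m_{s-}^\P).
\]

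\emph{Step 2: the ``$\ge$'' inequality in \eqref{eq:dpp_open_pre}.} Fix $\eta>0$ and choose an $\eta$-optimal $\mathsf Q\in\Pc_W(s,m_{s-}^\P)$ for the open problem. Apply the weak concatenation Lemma~\ref{lem:weak_concatenation}. Its proof only uses three facts: the glued law agrees with $\P$ before $s$, has image $\mathsf Q^s$ after $s$, and its marginal flows before and after $s$ are those of $\P$ and $\mathsf Q$. These facts identify the survival-mass function $p_r$ piecewise, so the threshold indicator (open or closed) is inherited on each piece. Thus item (iii) holds verbatim with $R^{\mathrm o}$ and $J^{\mathrm o}$, and
\[
    R^{\mathrm o}_{t,s}(\P)+V_{\alpha,W}(s,m_{s-}^\P)-\eta\le V_{\alpha,W}(t,\nu).
\]

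\emph{Step 3: equality of \eqref{eq:dpp_open_pre} and \eqref{eq:dpp_open_post}.} Two ingredients are needed.

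First, I need the open analogue of Lemma~\ref{lem:instantaneous_stopping_monotonicity}: if $m'\preceq m$ then $V_{\alpha,W}(s,m)\ge V_{\alpha,W}(s,m')$. The same proof works. Only the pre-$s$ status is replaced by an ancestor status $i^-\ge I_{s-}$, and the whole trajectory on $[s,T]$, together with its marginal flow and its survival masses $p_r$ for $r\ge s$, is unchanged. So the payoff is unchanged for any indicator of $p_r$. Since $m_s^\P\preceq m_{s-}^\P$, this gives \eqref{eq:dpp_open_pre} $\ge$ \eqref{eq:dpp_open_post}.

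Second, for the converse I use the no-jump modification $\bar I_r:=I_r$ for $r<s$ and $\bar I_r:=I_{s-}$ on $[s,T]$, exactly as in the proof of Theorem~\ref{thm:dpp}. This gives $R^{\mathrm o}_{t,s}(\bar\P)=R^{\mathrm o}_{t,s}(\P)$, because the two laws differ only at the single time $s$, and it gives $m_s^{\bar\P}=m_{s-}^\P$.

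\emph{Step 4: the strong versions.} For $\P\in\Pc_S$, Step 1 together with Theorem~\ref{thm:open_strong_weak_equivalence} ($V_{\alpha,S}=V_{\alpha,W}$) gives ``$\le$''. The reverse inequality follows from the weak DPP, the inclusion $\Pc_S\subset\Pc_W$, and again $V_{\alpha,S}=V_{\alpha,W}$. The no-jump construction preserves strong adaptedness because $I_{s-}$ is $\Gc^t_{s-}$-measurable, and the monotonicity lemma transfers to $V_{\alpha,S}$ through the equality of values.

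\emph{Main obstacle.} The delicate point is Step 2: checking that concatenation respects the open indicator. The concern would be a boundary effect at $s$, where the survival mass jumps from $p_{s-}^\P$ to $p_s^{\mathsf Q}$. No problem arises, because the objective is a Lebesgue integral, so the value at the single time $s$ is irrelevant. Unlike the closed case, I never need the constrained reduction (Lemma~\ref{lem:open_constraint_reduction}) here: the argument works directly with the discontinuous thresholded objectives. This avoids having to show that concatenating two open-constrained laws stays open-constrained.
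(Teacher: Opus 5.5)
Your proposal is correct and is essentially the paper's proof. The paper likewise notes that the weak concatenation lemma and the instantaneous-stopping monotonicity lemma are indifferent to whether the threshold interval is open or closed: concatenation identifies the survival-mass flow on each side of $s$, and monotonicity leaves the post-$s$ trajectory untouched. It then passes to the strong formulation via Theorem~\ref{thm:open_strong_weak_equivalence}, exactly as in Theorem~\ref{thm:dpp}.

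One small correction to your closing remark. The closed-threshold DPP in Theorem~\ref{thm:dpp} does not use the constrained reduction either. So working without Lemma~\ref{lem:open_constraint_reduction} is not a departure from the closed case.
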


\begin{proof}
Lemma \ref{lem:weak_concatenation} applies unchanged and splits the objective
at time $s$.  Lemma \ref{lem:instantaneous_stopping_monotonicity} then permits
$m_{s-}^\P$ to be replaced by $m_s^\P$.  Neither step depends on whether the
terminal time of the threshold interval is included.  Theorem
\ref{thm:open_strong_weak_equivalence} gives the strong identities by the same
argument as in Theorem \ref{thm:dpp}.
\end{proof}

\begin{theorem}[Dependence on the threshold]
\label{thm:alpha_dependence}
Suppose Assumption \ref{ass:coefficient_reward} holds, and fix
$(t,\nu)\in[0,T]\times\Pc_2(\Sbf)$.

\begin{enumerate}
\item[(i)] The maps $\alpha\mapsto V^\alpha(t,\nu)$ and
$\alpha\mapsto V_\alpha(t,\nu)$ are nonincreasing.  Moreover, whenever
$0\le\beta<\alpha\le1$,
\[
    V_\alpha(t,\nu)
    \le V^\alpha(t,\nu)
    \le V_\beta(t,\nu).
\]

\item[(ii)] For every $\alpha\in[0,1)$,
\[
    V_\alpha(t,\nu)
    =\lim_{\beta\downarrow\alpha}V_\beta(t,\nu)
    =\lim_{\beta\downarrow\alpha}V^\beta(t,\nu).
\]

\item[(iii)] If $\alpha\ne p(\nu)$, then
\[
    V_\alpha(t,\nu)=V^\alpha(t,\nu).
\]

\item[(iv)] Suppose in addition that, for every $r\in[0,T]$, the map
$(x,m)\mapsto f(r,x,m)$ is continuous on
$\R^n\times\Pc_2(\Sbf)$.  Then, for every $\alpha\in(0,1]$,
\[
    V^\alpha(t,\nu)
    =\lim_{\beta\uparrow\alpha}V_\beta(t,\nu)
    =\lim_{\beta\uparrow\alpha}V^\beta(t,\nu).
\]
Consequently, under this additional continuity assumption, both threshold
value functions are continuous at every $\alpha\ne p(\nu)$.  At
$\alpha=p(\nu)$, one always has $V_{p(\nu)}(t,\nu)=0$, and the only
possible discontinuity is the downward jump from $V^{p(\nu)}(t,\nu)$ to
$V_{p(\nu)}(t,\nu)$.
\end{enumerate}
\end{theorem}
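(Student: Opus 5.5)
The proof rests on the two reduction formulas of Lemma~\ref{lem:open_constraint_reduction}, which express $V_\alpha$ and $V^\alpha$ as suprema of $\mathcal F_t(\P^t)$ over laws whose survival flow lies in $\{0\}\cup(\alpha,1]$, respectively $\{0\}\cup[\alpha,1]$, for every $s$. Write $\mathcal C_\alpha$ and $\mathcal C^\alpha$ for these feasible classes.

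For (i) it suffices to note the inclusions $\mathcal C_\alpha\subseteq\mathcal C^\alpha\subseteq\mathcal C_\beta$ for $\beta<\alpha$. These give both monotonicity in the threshold and the sandwich $V_\alpha\le V^\alpha\le V_\beta$.

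For (ii), the identity $\mathcal C_\alpha=\bigcup_{\beta>\alpha}\mathcal C^\beta$ looks like the key point. A law in $\mathcal C_\alpha$ has a nonincreasing right-continuous flow $p_s$. On the deterministic set where $p_s>0$, the infimum of $p_s$ is either attained, so it is $>\alpha$, or approached as $s\uparrow\zeta$. In the latter case $p_{\zeta-}\ge$ that infimum, and one must check that this infimum is still $>\alpha$. This can fail: the flow can decrease to $\alpha$ from above without reaching it. To handle this, I use the randomization of Theorem~\ref{thm:open_strong_weak_equivalence}. Replace $I$ by $I^q$, which lies in $\mathcal C^{\alpha+q(p(\nu)-\alpha)}$ (assuming $p(\nu)>\alpha$, the other case being trivial with all values zero). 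Lemma~\ref{lem:small_survival_perturbation} gives $(\P^q)^t\to\P^t$, and Lemma~\ref{lem:reward_lsc} gives $\mathcal F_t(\P^t)\le\liminf_q V^{\beta_q}$ with $\beta_q\downarrow\alpha$. Combined with (i), which gives $\lim_{\beta\downarrow\alpha}V_\beta\le V_\alpha$ (the open values increase to a limit bounded by $V_\alpha$), this yields both equalities in (ii).

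For (iii), the case $\alpha>p(\nu)$ gives both values equal to zero. When $\alpha<p(\nu)$, take $\P\in\mathcal C^\alpha$. Its flow sits at exactly $\alpha$ on some interval $[a,\zeta)$ and exceeds $\alpha$ before $a$. I add the mass-lifting perturbation $I^q$ with the uniform seed, which uses $p(\nu)>\alpha$ to make the flow strictly exceed $\alpha$ before $\zeta$. This gives a law in $\mathcal C_\alpha$ converging to $\P$. Lemma~\ref{lem:reward_lsc} then yields $V^\alpha\le V_\alpha$, and (i) supplies the reverse inequality. This is essentially the argument of Theorem~\ref{thm:open_strong_weak_equivalence}.

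For (iv), the inequality $\lim_{\beta\uparrow\alpha}V^\beta\ge V^\alpha$ follows from monotonicity, and the two limits coincide by the sandwich in (i). The substantive direction, and the main obstacle, is upper semicontinuity from below: $\limsup_{\beta\uparrow\alpha}V^\beta\le V^\alpha$. For this I use compactness. By Theorem~\ref{thm:existence} there is an optimizer $Q_\beta\in\mathfrak K(t,\nu)$ with $H^{\beta}_t(Q_\beta)=0$, where $H^\beta_t$ denotes the penalty at level $\beta$. Lemma~\ref{lem:projected_law_compactness} gives a limit point $Q$ along $\beta\uparrow\alpha$. Continuity of the flow in $L^1(ds)$ gives $\int(\alpha-p^Q_s)^+p^Q_s\,ds=\lim\int(\beta-p^{Q_\beta}_s)^+p^{Q_\beta}_s\,ds=0$, because $(\beta-a)^+a\to(\alpha-a)^+a$ uniformly in $a$. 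Hence $Q$ satisfies the closed constraint at level $\alpha$, and continuity of $\mathcal F_t$ gives $\lim V^\beta=\mathcal F_t(Q)\le V^\alpha$. This step needs the continuity of $f$.

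The continuity claims then follow by combining (ii), (iii) and (iv). At $\alpha=p(\nu)$, every admissible flow satisfies $p_s\le p(\nu)=\alpha$, so the open indicator vanishes and $V_{p(\nu)}=0$. The jump there is therefore from $V^{p(\nu)}$ down to $0$, and right-continuity at that point holds via (ii).
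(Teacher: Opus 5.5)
Your proposal is correct. Parts (i) and (iii) match the paper: you use the class inclusions, and the uniform-seed lift $I^q$ combined with Lemma~\ref{lem:small_survival_perturbation} and Lemma~\ref{lem:reward_lsc}. Parts (ii) and (iv) take different routes.

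For (ii), the paper never compares feasible classes. Fix an admissible $\P$. The indicators $\mathbf 1_{(\beta,1]}(p^\P_s)$ and $\mathbf 1_{[\beta,1]}(p^\P_s)$ converge pointwise to $\mathbf 1_{(\alpha,1]}(p^\P_s)$ as $\beta\downarrow\alpha$, so dominated convergence shows that the $\beta$-objectives of an $\varepsilon$-optimal law for $V_\alpha$ converge to its $\alpha$-objective. Since $V_\beta$ and $V^\beta$ are suprema of thresholded objectives over all of $\Pc_W(t,\nu)$, this already gives the lower bound. The obstruction you found, $\mathcal C_\alpha\ne\bigcup_{\beta>\alpha}\mathcal C^\beta$, is genuine but only matters if you work inside constrained classes. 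Your randomization detour is valid, just heavier: it needs the enlargement by $U$, the perturbation lemma and lower semicontinuity, none of which the paper uses here.

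For (iv), the paper is constructive. From a near-optimal $\P_k$ for $V_{\beta_k}$, it keeps alive on $[\tau_k,\zeta_k)$ every particle alive at $\tau_k$, plus a randomized fraction of those stopping exactly at $\tau_k$. This makes the mass exactly $\alpha$ there, and paths are altered with probability at most $\alpha-\beta_k$. It then concludes with Lemma~\ref{lem:small_survival_perturbation} and uniform continuity of $\mathcal F_t$ on the compact set $\mathfrak K(t,\nu)$. You instead take optimizers from Theorem~\ref{thm:existence} at each level $\beta$ and pass to a limit in $\mathfrak K(t,\nu)$. Joint continuity of $(\beta,Q)\mapsto H^\beta_t(Q)$, via $|(\beta-a)^+a-(\alpha-a)^+a|\le|\beta-\alpha|$, gives feasibility of the limit at level $\alpha$. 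Your version is shorter and avoids the atom bookkeeping at $\tau_k$; it essentially repeats the closedness step of the existence proof. The paper's version yields an explicit $\alpha$-feasible approximation of near-optimal subthreshold laws. Both rely on the compactness lemma and on continuity of $f$.

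One minor precision in your last paragraph: at $\alpha=p(\nu)$, it is $\alpha\mapsto V_\alpha$ that is right-continuous. By contrast, $\alpha\mapsto V^\alpha$ is left-continuous there by (iv) and drops to $0$ on the right.
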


\begin{proof}
By Lemma \ref{lem:open_constraint_reduction} and
\eqref{eq:closed_constraint_pointwise}, increasing the threshold only shrinks
the corresponding constrained class.  Moreover, the open class at level
$\alpha$ is contained in the closed class at the same level, which in turn is
contained in the open class at every lower level $\beta<\alpha$.  These
inclusions prove part \textnormal{(i)}.

Fix $\alpha<1$.  For every fixed admissible law $\P$, dominated convergence,
justified by \eqref{eq:f_growth} and Lemma \ref{lem:moment_estimate}, gives
\begin{align*}
    \lim_{\beta\downarrow\alpha}
      \E^\P\!\left[
        \int_t^T f(s,X_s,\mu_s^\P)I_s
        \mathbf 1_{(\beta,1]}(p_s^\P)ds
      \right]
    =
      \E^\P\!\left[
        \int_t^T f(s,X_s,\mu_s^\P)I_s
        \mathbf 1_{(\alpha,1]}(p_s^\P)ds
      \right],
\end{align*}
and the same limit holds when the interval $(\beta,1]$ in the first indicator
is replaced by $[\beta,1]$.  Evaluating both $\beta$-problems at an
$\varepsilon$-optimal law for $V_\alpha(t,\nu)$ gives a lower bound for each
right-hand limit.  Part \textnormal{(i)} gives, for
$\beta>\alpha$,
$V_\beta\le V_\alpha$ and $V^\beta\le V_\alpha$.  Letting
$\varepsilon\downarrow0$ proves part \textnormal{(ii)}.

For part \textnormal{(iii)}, suppose first that $\alpha>p(\nu)$.  Every
admissible survival mass is then at most $p(\nu)$, so both values vanish.  The
same is clear when $\alpha=0$ and $p(\nu)=0$.  We are left with the case
$\alpha<p(\nu)$.

Take a law $\P$ in the closed constrained class in
\eqref{eq:closed_constraint_pointwise}.  Let
$\zeta:=\inf\{s\in[t,T]:p_s^\P=0\}$, with $\zeta=T+1$ if the set is empty.
Using the independent time-$t$ randomization from the proof of Theorem
\ref{thm:open_strong_weak_equivalence}, define $I^q$ in the same way.  For
$s<\zeta$,
\[
    \E[I_s^q]=(1-q)p_s^\P+q p(\nu)>\alpha,
\]
and the modified survival mass is zero from $\zeta$ onward.  Thus the modified
law is admissible for the open threshold $\alpha$.  Lemma
\ref{lem:small_survival_perturbation} gives convergence of its projection on $\Om_t$ to
$\P^t$, and Lemma \ref{lem:reward_lsc} yields
\[
    V_\alpha(t,\nu)
    \ge \liminf_{q\downarrow0}\mathcal F_t((\P^q)^t)
    \ge \mathcal F_t(\P^t).
\]
Taking the supremum over the closed constrained class gives
$V_\alpha\ge V^\alpha$; the reverse inequality follows from part
\textnormal{(i)}.

We turn to part \textnormal{(iv)}.  If $\alpha>p(\nu)$, all values vanish for
thresholds sufficiently close to $\alpha$ from the left.  Assume instead that
$\alpha\le p(\nu)$ and let
$\beta_k\uparrow\alpha$.  Using \eqref{eq:open_constraint_reduction}, choose
$\P_k\in\Pc_W(t,\nu)$ such that
\begin{equation}
\label{eq:beta_near_optimizer}
    p_s^{\P_k}\in\{0\}\cup(\beta_k,1]
    \quad\text{for every }s,
    \qquad
    \mathcal F_t(\P_k^t)\ge V_{\beta_k}(t,\nu)-\frac1k.
\end{equation}
Set
\[
    \zeta_k:=\inf\{s\in[t,T]:p_s^{\P_k}\le\beta_k\},
    \qquad
    \tau_k:=\inf\{s\in[t,T]:p_s^{\P_k}\le\alpha\},
\]
with the value $T+1$ when the corresponding set is empty.  By the cutoff in
Lemma \ref{lem:open_constraint_reduction}, $p_s^{\P_k}=0$ for
$s\ge\zeta_k$.

If $\tau_k=\zeta_k$ or $\tau_k=T+1$, set $\bar\P_k=\P_k$.  Otherwise,
realize $\P_k$ on a stochastic basis with
coordinates $(X^k,W^k,I^k)$ and probability $\mathbb Q_k$, and enlarge the
time-$t$ sigma-field by a random variable $U_k\sim\mathrm{Unif}(0,1)$
independent of the original system.  Brownian increments after $t$ remain
independent of the enlarged initial sigma-field.  If
$p_{\tau_k-}^{\P_k}>p_{\tau_k}^{\P_k}$, set
\[
    r_k:=
    \frac{\alpha-p_{\tau_k}^{\P_k}}
         {p_{\tau_k-}^{\P_k}-p_{\tau_k}^{\P_k}},
\]
and set $r_k=0$ when the denominator is zero.  In the first case,
$p_{\tau_k}^{\P_k}\le\alpha\le p_{\tau_k-}^{\P_k}$, so $r_k\in[0,1]$.
The zero-denominator case necessarily has
$p_{\tau_k-}^{\P_k}=p_{\tau_k}^{\P_k}=\alpha$.  Define
\[
    K_k
    :=I^k_{\tau_k}
      +(I^k_{\tau_k-}-I^k_{\tau_k})\mathbf 1_{\{U_k\le r_k\}}.
\]
Then $K_k\in\{0,1\}$ and $K_k\le I^k_{\tau_k-}$.  Independence of $U_k$ gives
\[
    \E^{\mathbb Q_k}[K_k]
    =p_{\tau_k}^{\P_k}
      +r_k\big(p_{\tau_k-}^{\P_k}-p_{\tau_k}^{\P_k}\big)
    =\alpha,
\]
with the same conclusion in the zero-denominator case.  Define
\[
    \bar I_s^k
    :=
    \begin{cases}
       I_s^k, & t\le s<\tau_k,\\
       K_k, & \tau_k\le s<\zeta_k,\\
       0,   & \zeta_k\le s\le T,
    \end{cases}
    \qquad
    \bar I_{t-}^k:=I^k_{t-}.
\]
This is a valid nonanticipative survival path adapted to the filtration
enlarged by $U_k$.  Its survival mass is strictly larger than $\alpha$ before
$\tau_k$, equal to $\alpha$ on $[\tau_k,\zeta_k)$, and zero from $\zeta_k$
onward.  The law $\bar\P_k$ obtained by solving the state equation with
$\bar I^k$ is admissible for the closed constraint at level $\alpha$.

A path is altered only on the event $K_k=1$ when its original stopping time
lies in $[\tau_k,\zeta_k)$.  Consequently,
\[
    \mathbb Q_k(I^k\ne\bar I^k)
    =\E^{\mathbb Q_k}[K_k-I^k_{\zeta_k-}]
    =\alpha-p_{\zeta_k-}^{\P_k}
    \le\alpha-\beta_k,
\]
where, if $\zeta_k=T+1$, $I^k_{\zeta_k-}$ and
$p_{\zeta_k-}^{\P_k}$ are understood as $I^k_T$ and $p_T^{\P_k}$,
respectively; the inequality follows from
$p_{\zeta_k-}^{\P_k}\ge\beta_k$.  Hence the probability that $I^k$ and
$\bar I^k$ differ tends to zero, and Lemma
\ref{lem:small_survival_perturbation} gives
\[
    \Wc_{2,t}(\P_k^t,\bar\P_k^t)\longrightarrow0.
\]
Under the additional continuity assumption, Lemma
\ref{lem:projected_law_compactness} shows that $\mathcal F_t$ is continuous on the
compact set $\mathfrak K(t,\nu)$, hence uniformly continuous there.  Therefore
\[
    \mathcal F_t(\bar\P_k^t)-\mathcal F_t(\P_k^t)\longrightarrow0.
\]
Since $\bar\P_k$ is admissible for the closed constraint at level $\alpha$,
\[
    V^\alpha(t,\nu)
    \ge \mathcal F_t(\bar\P_k^t)
    \ge V_{\beta_k}(t,\nu)-\frac1k-o(1).
\]
Part \textnormal{(i)} gives the reverse inequality in the limit, so
\[
    \lim_{\beta\uparrow\alpha}V_\beta(t,\nu)=V^\alpha(t,\nu).
\]
In the nontrivial regime $\beta<\alpha\le p(\nu)$, part
\textnormal{(iii)} gives $V_\beta=V^\beta$.  The second equality in part
\textnormal{(iv)} follows.
\end{proof}

\begin{remark}
\label{rem:alpha_lsc_counterexample}
Part \textnormal{(iv)} is false under lower semicontinuity alone.  Fix
$a\in(0,1)$ and $p_0\in(a,1]$, take $t<T$, $b=\sigma=0$, and
\[
    \nu=p_0\delta_{(0,1)}+(1-p_0)\delta_{(0,0)},
    \qquad
    f(s,x,m):=\mathbf 1_{\{p(m)<a\}}.
\]
The reward is bounded and lower semicontinuous because $p$ is continuous.  For
every $\gamma<a$, instantaneously reducing the surviving mass to any
$q\in(\gamma,a)$ and then keeping it constant yields the reward $q(T-t)$.  No
strategy can give more than $a(T-t)$, and therefore
\[
    V_\gamma(t,\nu)=V^\gamma(t,\nu)=a(T-t),
    \qquad \gamma<a.
\]
At the threshold $a$, the reward is zero whenever the threshold indicator is
active, so
\[
    V_a(t,\nu)=V^a(t,\nu)=0.
\]
Thus a left discontinuity can occur at a threshold different from $p(\nu)$
when $f$ is merely lower semicontinuous.
\end{remark}

\begin{remark}
\label{rem:open_alpha_no_optimizer}
The nonclosed open constraint can destroy existence even when $f$ is
continuous.  Fix $0<\alpha<p_0\le1$, take $t<T$, $b=\sigma=0$,
\[
    \nu=p_0\delta_{(0,1)}+(1-p_0)\delta_{(0,0)},
    \qquad
    f(s,x,m):=2\alpha-p(m).
\]
For every admissible law, the instantaneous open-threshold reward is
\[
    p_s^\P(2\alpha-p_s^\P)
    \mathbf 1_{(\alpha,1]}(p_s^\P)
    \le \alpha^2,
\]
and the inequality is strict at every time.  On the other hand, keeping a
constant surviving mass $q>\alpha$ gives
$q(2\alpha-q)(T-t)\uparrow\alpha^2(T-t)$ as $q\downarrow\alpha$.
Consequently,
\[
    V_\alpha(t,\nu)=\alpha^2(T-t),
\]
but the supremum is not attained.  The corresponding closed-threshold value
is attained by keeping the surviving mass exactly equal to $\alpha$.
\end{remark}

\bibliography{references}
\bibliographystyle{plain}
\end{document}